\def\F{\mathbb{F}}
\DeclareMathOperator{\PG}{PG}
\DeclareMathOperator{\PGL}{PGL}
\DeclareMathOperator{\PGSp}{PGSp}
\DeclareMathOperator{\PGU}{PGU}
\DeclareMathOperator{\Aut}{Aut}
\newtheorem{theorem}{Theorem}[section]
\newtheorem{lemma}[theorem]{Lemma}
\newtheorem{corollary}[theorem]{Corollary}
\theoremstyle{definition}
\newtheorem{definition}[theorem]{Definition}
\newtheorem{remark}[theorem]{Remark}
\newtheoremstyle{dotless}{}{}{\itshape}{}{\bfseries}{}{ }{}
  \theoremstyle{dotless}
\newcommand{\comments}[1]{}
\newcommand{\gs}[3]{\genfrac{[}{]}{0pt}{}{#1}{#2}_{#3}}
\author{Maarten De Boeck\thanks{Department of Mathematical Sciences, University of Memphis, Dunn Hall, 3725 Norriswood Ave, Memphis, TN 38152, USA. ORCID: 0000-0001-8399-9064. \href{mailto:mdeboeck@memphis.edu}{\url{mdeboeck@memphis.edu}}\newline Department of Mathematics: Algebra and Geometry, Ghent University, Gent, Flanders, Belgium}~ and Geertrui Van de Voorde\thanks{School of Mathematics and Statistics , University of Canterbury, Private Bag 4800, 8140 Christchurch, New Zealand. ORCID: 0000-0002-4957-6911. \href{mailto:geertrui.vandevoorde@canterbury.ac.nz}{\url{geertrui.vandevoorde@canterbury.ac.nz}}}}
\title{Anzahl theorems for trivially intersecting subspaces generating a non-singular subspace I: symplectic and hermitian forms}
\date{}
\begin{document}
\maketitle

\begin{abstract}
    In this paper, we solve a classical counting problem for non-degenerate forms of symplectic and hermitian type defined on a vector space: given a subspace $\pi$, we find the number of non-singular subspaces that are trivially intersecting with $\pi$ and span a non-singular subspace with $\pi$. Lower bounds for the quantity of such pairs where $\pi$ is non-singular were first studied in ``Glasby, Niemeyer, Praeger (Finite Fields Appl., 2022)'', which was later improved in ``Glasby, Ihringer, Mattheus (Des. Codes Cryptogr., 2023)'' and  generalised in ``Glasby, Niemeyer, Praeger (Linear Algebra Appl., 2022)''. In this paper, we derive explicit formulae, which allow us to give the exact proportion and improve the known lower bounds.
\end{abstract}

\textbf{Keywords:} symplectic form, hermitian form, counting, non-singular subspace

\textbf{MSC:} 51A50, 51E20

\section{Introduction}

Counting theorems are at the heart of finite geometry, and so-called \emph{Anzahl theorems} have been studied for geometries defined over finite fields for many decades already. Counting the number of $k$-spaces in a vector space $\F_{q}^{n}$ (equivalently in the projective space $\PG(n-1,q)$) is a standard exercise in an introductory projective geometry class. Another classical result, Lemma \ref{lem:segre}, is due to Segre and counts the number of $k$-spaces in a vector space $\F_{q}^{n}$ trivially intersecting a fixed $j$-space.
\par The non-degenerate quadratic, hermitian and symplectic forms on $\F_{q}^{n}$ define the so-called \emph{classical polar spaces}; they are called quadrics, hermitian and symplectic polar spaces respectively. The {\em subspaces} of these geometries are the totally isotropic subspaces with respect to their defining form. The number of $k$-dimensional subspaces contained in a quadric was determined by Segre (\cite{Segre59}), Ray-Chaudhuri (\cite{Ray}), and Pless \cite{Pless}, who also covered the symplectic case. An important contribution was made by Wan and his students (who also coined the name Anzahl theorem). They counted the number of subspaces with respect to each of the forms, and not only the totally isotropic subspaces, but each of the orbits with respect to the subgroups of $\PGL(n,q)$ stabilising the form up to scalar multiple. Later on, Wan derived also Anzahl theorems for degenerate forms. An overview of his results and other Anzahl theorems can be found in the seminal work \cite{wanbook} and in the survey paper \cite{wansurvey}, which also contains a more detailed history of the Anzahl theorems.
\par The research in this paper has been instigated by the problem posed and discussed in \cite{gim,gnp,gnp2}: given a form on $\F^{n}_{q}$ it asks for the proportion of pairs of non-singular subspaces that are trivially intersecting and span a non-singular subspace. The motivation for this problem comes from computational group theory, more precisely the algorithms for recognising classical groups. For more details we refer to \cite[Section 1.1]{gnp2}. The problem was first discussed in \cite{gnp}, where the special case of complementary pairs of subspaces was discussed, and a lower bound on their proportion was determined. This lower bound was subsequently improved in \cite{gim}. The more general problem, where the subspaces do not necessarily span the entire space, was then discussed in \cite{gnp2} and also here a lower bound was determined.
\par In the current paper we will determine these proportions exactly for the case of hermitian an symplectic forms; we are currently investigating the (more involved) case of the quadratic forms. We will study this problem as an Anzahl problem, that is, we find a hermitian/symplectic analogue for the following result of Segre.

\begin{lemma}\cite[Section 170]{segre}\label{lem:segre}
    The number of $j$-spaces in $\F^{n}_{q}$, trivially intersecting a fixed $k$-space equals $q^{kj}\gs{n-k}{j}{q}$.
\end{lemma}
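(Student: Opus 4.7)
The plan is to count ordered linearly independent tuples that span the desired $j$-spaces, and then divide by the number of ordered bases of a fixed $j$-space. Write $V$ for the given $k$-space.

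First I would count tuples $(w_{1},\dots,w_{j})$ in $\F_q^{n}$ spanning a $j$-space that trivially intersects $V$. The conditions $\dim\langle w_{1},\dots,w_{j}\rangle = j$ and $\langle w_{1},\dots,w_{j}\rangle\cap V = \{0\}$ together are equivalent to requiring that $w_{1},\dots,w_{j}$ are linearly independent modulo $V$. Building such a tuple one vector at a time, $w_{1}$ must avoid $V$, giving $q^{n}-q^{k}$ options; and for each $i\geq 2$ the vector $w_{i}$ must avoid $V+\langle w_{1},\dots,w_{i-1}\rangle$, which is a subspace of dimension $k+i-1$, giving $q^{n}-q^{k+i-1}$ options. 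The total number of such ordered tuples is therefore $\prod_{i=0}^{j-1}(q^{n}-q^{k+i})$.

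Then I would divide by the well-known count $\prod_{i=0}^{j-1}(q^{j}-q^{i})$ of ordered bases of a fixed $j$-space. Factoring out the appropriate power of $q$ from both numerator and denominator gives
$$\frac{\prod_{i=0}^{j-1}(q^{n}-q^{k+i})}{\prod_{i=0}^{j-1}(q^{j}-q^{i})} = q^{kj}\prod_{i=0}^{j-1}\frac{q^{n-k-i}-1}{q^{j-i}-1} = q^{kj}\gs{n-k}{j}{q},$$
which is the claimed formula. There is no serious obstacle: the only point requiring care is correctly translating ``dimension $j$ and trivial intersection with $V$'' into ``linearly independent modulo $V$'', which is what makes the inductive step of the count uniform. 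As a cleaner alternative one can work directly in the quotient $\F_q^{n}/V$, observing that each admissible $W$ projects isomorphically onto a $j$-space $W'$ of the $(n-k)$-dimensional quotient, and that once a basis lift of $W'$ is fixed the lifts to $W$ are in bijection with $V^{j}$; this immediately gives $\gs{n-k}{j}{q}\cdot q^{kj}$.
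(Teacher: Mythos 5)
Your argument is correct and complete: both the tuple-counting computation and the quotient-space alternative are standard and check out against the paper's definition $\gs{n-k}{j}{q}=\psi^{-}_{n-k-j+1,n-k}(q)/\psi^{-}_{1,j}(q)$. The paper itself gives no proof of this lemma (it is cited from Segre as a classical result), so there is nothing to compare against; your write-up would serve as a valid self-contained proof.
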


More precisely, given a fixed subspace $\pi$ of $\F^{n}_{q}$ we will determine the exact number of non-singular subspaces trivially intersecting $\pi$ and spanning a non-singular subspace with it. We will not only do this for the case where $\pi$ is non-singular, but for all types of $\pi$. The results are given in Theorem \ref{th:gammageneralhermitian} (hermitian) and Theorem \ref{th:gammageneralsymplectic} (symplectic).
\par The paper is organised as follows. In Section \ref{sec:preliminaries} we will introduce the necessary background and introduce the functions that we will use to describe the results. In Sections \ref{sec:hermitian} and \ref{sec:symplectic} we will discuss the hermitian and symplectic forms, respectively. In each of the sections, after presenting the main Anzahl theorem, we will determine the proportion that was investigated in \cite{gim,gnp,gnp2}.

\section{Preliminaries}\label{sec:preliminaries}

\subsection{Forms}

We first describe the forms we will be working with in this paper.

\begin{definition}
    A \emph{sesquilinear form} on the vector space $\F^{n}_{q}$ is a map $f:\F^{n}_{q}\times \F^{n}_{q}\to\F_{q}$ such that for all $u,v,w\in\F^{n}_{q}$ and all $\lambda,\mu\in\F_{q}$
    \[
        f(u+v,w)=f(u,w)+f(v,w),\qquad f(u,v+w)=f(u,v)+f(u,w)\quad\text{and}\quad f(\lambda v,\mu w)=\lambda\mu^{\theta}f(v,w)
    \]
    for some $\theta\in\Aut(\F_{q})$ with $\theta^2=1$, i.e.~$f$ is linear in the first argument and semi-linear in the second. In case $\theta=1$, then $f$ is called \emph{bilinear}.
    \par A sesquilinear form is called \emph{reflexive} if $f(v,w)=0$ implies $f(w,v)=0$, for all $v,w\in\F^{n}_q$.
\end{definition}

\begin{definition}
    A sesquilinear form $f$ on $\F^{n}_{q}$ is called
    \begin{itemize}
        \item \emph{symmetric} if it is bilinear and $f(v,w)=f(w,v)$ for all $v,w\in\F^{n}_q$;
        \item \emph{symplectic} (or \emph{alternating}) if it is bilinear and $f(v,w)=-f(w,v)$ for all $v,w\in\F^{n}_q$ and $f(v,v)=0$ for all $v\in\F^{n}_q$;
        \item \emph{hermitian} if $\theta\neq1$ and $f(v,w)=f(w,v)^{\theta}$ for all $v,w\in\F^{n}_q$.
    \end{itemize}
\end{definition}

Note that the second condition for symplectic forms is only necessary if $q$ is even. For hermitian forms, the field automorphism $\theta\neq 1$ is necessarily given by $x\mapsto x^{\sqrt{q}}$ and thus hermitian forms only exist when $q$ is a square.

\begin{definition}
    The \emph{radical} of a reflexive sesquilinear form $f$ on $\F^{n}_{q}$ is the set $\{v\in\F^{n}_{q}\mid\forall w\in\F^{n}_{q}:f(v,w)=0\}$. If the radical of $f$ only contains the zero vector, then $f$ is \emph{non-degenerate}.
\end{definition}

It is a classical result in linear algebra that every reflexive sesquilinear form is either symmetric, symplectic or a scalar multiple of a hermitian form. In this article we will only discuss vector spaces equipped with a symplectic or hermitian form. We now introduce their associated polarities.

\begin{definition}
    Given a non-degenerate symplectic or hermitian form $f$ on $\F^{n}_{q}$ we define the corresponding symplectic or hermitian \emph{polarity} $\perp$ as follows:
    \begin{align*}
        \perp:S\to S:\pi\mapsto \{x\in\F^{n}_{q}\mid \forall y\in \pi:f(x,y)=0\}
    \end{align*}
    where $S$ is the set of all subspaces of $\F^{n}_{q}$.
\end{definition}

A polarity on $\F^{n}_{q}$ is an involutory inclusion-reversing permutation of the subspaces. We have that $\dim(\pi)+\dim(\pi^\perp)=n$.

\begin{definition}
    Given a symplectic or hermitian form $f$ on a vector space $V$, the subspaces $\pi$ in $V$ such that the restriction on $\pi$ has an $i$-dimensional radical are called \emph{$i$-singular}. A 0-singular space is also called \emph{non-singular}. If $f$ vanishes on $\pi$ (equivalently, the radical of $\pi$ restricted to $f$ is $\pi$ itself), then $\pi$ is \emph{totally isotropic}.
\end{definition}

The totally isotropic subspaces with respect to a symplectic or a hermitian form are precisely the subspaces $\pi$ such that $\pi\subseteq\pi^{\perp}$ where $\perp$ is the corresponding polarity.

\begin{definition}
    The incidence structure consisting of all totally isotropic subspaces with respect to a symplectic or a hermitian form is a symplectic or hermitian \emph{polar space}. 
\end{definition}

 The symplectic and hermitian polar spaces are two classes of \emph{classical polar spaces}. A classical polar space has a natural embedding in a vector space, and can therefore also be naturally embedded in the corresponding projective space.

Looking at the symplectic or hermitian polar space $\mathcal{P}$ embedded in the projective space $\PG(n-1,q)$, an $i$-singular $j$-space in $\F^{n}_{q}$ corresponds to a $(j-1)$-space of $\PG(n-1,q)$ meeting $\mathcal{P}$ in a cone with an $(i-1)$-dimensional vertex and as base a non-singular polar space of the same type as $\mathcal{P}$ in a $(j-i-1)$-space disjoint to this vertex (where we use vectorial dimension for the subspaces of $\F^{n}_{q}$, and projective dimension for their interpretation in $\PG(n-1,q)$).

In this article the number of $i$-singular $j$-spaces with respect to a symplectic or hermitian form $f$ will typically be denoted by $\alpha_{i,j,n}$.

\subsection{Some useful functions}

We introduce the following functions.

\begin{definition}\label{def:functionsphipsichi}
    For integers $b\geq a\geq0$ we define
    \begin{align*}
        \varphi_{a,b}^{+}(q)&=\prod_{k=a}^{b}\left(q^{k}+(-1)^{k}\right),&\quad\psi_{a,b}^{+}(q)&=\prod_{k=a}^{b}\left(q^{k}+1\right),&\quad\chi_{a,b}(q)&=\prod_{k=a}^{b}\left(q^{2k-1}-1\right),\\
        \varphi_{a,b}^{-}(q)&=\prod_{k=a}^{b}\left(q^{k}-(-1)^{k}\right),&\quad\psi_{a,b}^{-}(q)&=\prod_{k=a}^{b}\left(q^{k}-1\right).
    \end{align*}
    Furthermore, we set $\varphi_{a,a-1}^{+}(q)=\varphi_{a,a-1}^{-}(q)=\psi_{a,a-1}^{+}(q)=\psi_{a,a-1}^{+}(q)=\chi_{a,a-1}(q)=1$ for $a\geq0$; this corresponds to the empty product.
\end{definition}

The classical \emph{Gaussian binomial coefficient} can be described using the function $\psi^{-}$.

\begin{definition}
    For integers $b\geq a\geq 0$ and prime powers $q$ we define
    \begin{align*}
	   \gs{b}{a}{q}=\frac{\psi^{-}_{b-a+1,b}(q)}{\psi^-_{1,a}(q)}\:.
    \end{align*}
    Furthermore, we set $\gs{b}{a}{q}=0$ if $a<0$ or $b<a$.
\end{definition}

It is a standard counting result that $\gs{b}{a}{q}$ equals the number of $a$-dimensional subspaces in $\F^{b}_q$. We now define the following variation of the Gaussian binomial coefficient.

\begin{definition}
    For integers $b\geq a\geq 0$ and prime powers $q$ we define
    \begin{align*}
	   \gs{b}{a}{q}^{-}=\frac{\varphi_{b-a+1,b}^{-}(q)}{\varphi_{1,a}^{-}(q)}\:.
    \end{align*}
    Furthermore, we set $\gs{b}{a}{q}^{-}=0$ if $a<0$ or $b<a$.
\end{definition}



In the final part of this preliminaries section, we show some upper and lower bounds for the functions introduced in Definition \ref{def:functionsphipsichi}, which we will need when discussing the lower bounds for the proportion in Theorems \ref{th:hermitianrhobound1}, \ref{th:hermitianrhobound2} and \ref{th:symplecticrhobound}.

\begin{lemma}\label{lem:psiminbounds}
    For $a\geq1$ we have
    \[
        \psi^{-}_{1,a}(q)\geq q^{\binom{a+1}{2}}\left(1-\frac{1}{q}-\frac{1}{q^2}+\frac{1}{q^{a+1}}\right)\:,
    \]
    and
    \[
        \psi^{-}_{1,2a}(q)\frac{q^{2a+2}-1}{q^2-1}\geq q^{\binom{2a+2}{2}-1}\left(1-\frac{1}{q}-\frac{1}{q^3}+\frac{q^2-q+1}{q^{2a+3}}\right)\:.
    \]
\end{lemma}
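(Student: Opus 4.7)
The plan is to prove both inequalities by induction on $a$ after extracting the dominant power of $q$ from each side.

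For the first bound, I would write $\psi^{-}_{1,a}(q) = q^{\binom{a+1}{2}}\prod_{k=1}^a(1-q^{-k})$, reducing the claim to
\[
\prod_{k=1}^a(1-q^{-k}) \geq 1 - q^{-1} - q^{-2} + q^{-(a+1)}\:.
\]
I would proceed by induction on $a$: the base case $a=1$ is the equality $1-q^{-1} = 1-q^{-1}-q^{-2}+q^{-2}$, and for the inductive step one multiplies the hypothesis by the positive factor $(1-q^{-(a+1)})$ and expands, observing that the excess over $1-q^{-1}-q^{-2}+q^{-(a+2)}$ is exactly $q^{-(a+3)}-q^{-(2a+2)}$, which is non-negative since $a+3\leq 2a+2$ for $a\geq 1$.

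For the second bound, analogously extracting $q^{\binom{2a+2}{2}-1}$ (using $\binom{2a+1}{2}+2a=\binom{2a+2}{2}-1$) reduces the claim to $P_a\geq R_a$, where
\[
P_a := \prod_{k=1}^{2a}(1-q^{-k})\cdot\frac{1-q^{-(2a+2)}}{1-q^{-2}}\quad\text{and}\quad R_a := 1-q^{-1}-q^{-3}+q^{-(2a+1)}-q^{-(2a+2)}+q^{-(2a+3)}\:.
\]
A direct comparison of products yields the recursion $P_a=P_{a-1}(1-q^{-(2a-1)})(1-q^{-(2a+2)})$. I would again induct on $a$: for the base case $a=1$, after the factor $(1-q^{-2})$ cancels, one computes $P_1=(1-q^{-1})(1-q^{-4})=R_1$. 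For the inductive step, it suffices to verify
\[
R_{a-1}(1-q^{-(2a-1)})(1-q^{-(2a+2)})-R_a \geq 0 \qquad\text{for all } a\geq 2,\ q\geq 2\:.
\]
Expanding and subtracting $R_a$, the resulting signed combination of powers of $q$ regroups as
\[
q^{-(2a+2)}\bigl(1-q^{-(2a-4)}\bigr) + q^{-(2a+5)} + q^{-(4a-1)}(1-q^{-1}) + \frac{q^2-q+1}{q^{6a+2}} - q^{-(4a+3)} - q^{-(4a+4)}\:.
\]
For $a\geq 3$ the leading summand $q^{-(2a+2)}(1-q^{-(2a-4)})$ already dominates the two subtractive tail terms of order $q^{-(4a+3)}$; at the remaining case $a=2$, that leading summand vanishes but $q^{-(4a-1)}(1-q^{-1})=q^{-7}(1-q^{-1})$ is more than enough to absorb $q^{-11}+q^{-12}$ for $q\geq 2$.

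The main technical obstacle is the bookkeeping in the expansion of $R_{a-1}(1-q^{-(2a-1)})(1-q^{-(2a+2)})$ in the inductive step for the second bound: the cross-products produce roughly a dozen monomials in $q^{-1}$, and several cancellations against $R_a$ must be tracked carefully before the clean regrouping displayed above emerges. Once the regrouping is in hand, however, non-negativity reduces to elementary comparisons between powers of $q$.
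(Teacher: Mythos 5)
Your proposal is correct and follows essentially the same route as the paper: both parts are proved by induction on $a$ using the identical multiplicative recursion (your factor $(1-q^{-(2a-1)})(1-q^{-(2a+2)})$ is exactly the paper's $(q^{2a+1}-1)(q^{2a+4}-1)$ after normalising by the leading power of $q$), with the inductive step reduced to checking that a small signed sum of powers of $q^{-1}$ is non-negative. The only difference is cosmetic: you divide out $q^{\binom{a+1}{2}}$ (resp.\ $q^{\binom{2a+2}{2}-1}$) at the outset and regroup the leftover monomials slightly differently, and your expansions and the final comparisons (e.g.\ $q^{-(a+3)}\geq q^{-(2a+2)}$ for $a\geq1$, and the $a=2$ versus $a\geq3$ case split) all check out.
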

\begin{proof}
    We prove both results using induction on $a$. The base cases are immediate. We now prove the induction step using the induction hypothesis:
    \begin{align*}
        \psi^{-}_{1,a+1}(q)&=\left(q^{a+1}-1\right)\psi^{-}_{1,a}(q)\\&\geq \left(q^{a+1}-1\right)q^{\binom{a+1}{2}}\left(1-\frac{1}{q}-\frac{1}{q^2}+\frac{1}{q^{a+1}}\right)\\
        &= q^{\binom{a+2}{2}}\left(1-\frac{1}{q}-\frac{1}{q^2}+\frac{1}{q^{a+2}}+\frac{1}{q^{a+3}}-\frac{1}{q^{2a+2}}\right)\\
        &\geq q^{\binom{a+2}{2}}\left(1-\frac{1}{q}-\frac{1}{q^2}+\frac{1}{q^{a+2}}\right)
    \end{align*}
    and
    \begin{align*}
        \psi^{-}_{1,2a+2}(q)\frac{q^{2a+4}-1}{q^2-1}&=\psi^{-}_{1,2a}(q)\left(q^{2a+1}-1\right)\frac{q^{2a+2}-1}{q^2-1}\left(q^{2a+4}-1\right)\\
        &\geq q^{\binom{2a+2}{2}-1}\left(1-\frac{1}{q}-\frac{1}{q^3}+\frac{q^2-q+1}{q^{2a+3}}\right)\left(q^{4a+5}-q^{2a+4}-q^{2a+1}+1\right)\\
        &= q^{\binom{2a+4}{2}-1}\left(1-\frac{1}{q}-\frac{1}{q^3}+\frac{q^{4}+q^{2}+1}{q^{2a+7}}-\frac{q^{6}-q^{5}+q^{4}+q+1}{q^{4a+8}}+\frac{q^2-q+1}{q^{6a+8}}\right)\\
        &\geq q^{\binom{2a+4}{2}-1}\left(1-\frac{1}{q}-\frac{1}{q^3}+\frac{q^2-q+1}{q^{2a+5}}\right)\:,
    \end{align*}
    where the last transition follows from
    \[
        \left(q^{4}+q^{2}+1\right)q^{2a+1}\geq\left(q^{4}+q^{2}+1\right)q^{3}\geq q^{6}-q^{5}+q^{4}+q+1
    \]
    for all prime powers $q\geq2$.
\end{proof}

\begin{lemma}\label{lem:phiminupperbound}
    For integers $a,b$ with $a\geq 2$ and $b\geq0$ we have
    \[
        \varphi^{-}_{b+1,b+a}(q)\leq q^{ab+\binom{a+1}{2}}\left(1+(-1)^{b}q^{-b-a}\frac{q^{a}-(-1)^{a}}{q+1}-q^{-2b-a-1}\right)\;.
    \]
\end{lemma}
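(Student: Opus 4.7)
The plan is to prove the bound by induction on $a$. For the base case $a=2$, direct expansion gives
\[
\varphi^{-}_{b+1,b+2}(q) = (q^{b+1}+(-1)^{b})(q^{b+2}-(-1)^{b}) = q^{2b+3}+(-1)^{b}q^{b+1}(q-1)-1,
\]
and the asserted right-hand side, using $(q^{2}-1)/(q+1)=q-1$, simplifies to the same expression, so the base case holds with equality.

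For the inductive step, denote the asserted right-hand side by $g(a,b)$ and assume $\varphi^{-}_{b+1,b+a}(q)\leq g(a,b)$. The recursion $\varphi^{-}_{b+1,b+a+1}(q)=\varphi^{-}_{b+1,b+a}(q)(q^{b+a+1}+(-1)^{b+a})$, combined with the positivity of the factor $q^{b+a+1}+(-1)^{b+a}$, reduces the task to establishing the purely algebraic inequality
\[
g(a,b)(q^{b+a+1}+(-1)^{b+a}) \leq g(a+1,b).
\]

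Set $B_{a}:=(q^{a}-(-1)^{a})/(q+1)$. The identity $B_{a+1}=qB_{a}+(-1)^{a}$, obtained from $q^{a+1}+(-1)^{a}=q(q^{a}-(-1)^{a})+(-1)^{a}(q+1)$, causes the next-to-leading terms in the difference $g(a+1,b)-g(a,b)(q^{b+a+1}+(-1)^{b+a})$ to telescope away. A direct expansion then yields
\[
g(a+1,b) - g(a,b)(q^{b+a+1}+(-1)^{b+a}) = q^{(a-1)b+\binom{a}{2}}\bigl[(q-1)q^{a-1} - (-1)^{a}B_{a} + (-1)^{a+b}q^{-b-1}\bigr],
\]
and it suffices to show that the bracketed quantity $D$ is nonnegative. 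A short case analysis on the parity of $a$ closes the argument: if $a$ is even, then $(q-1)q^{a-1}-B_{a}=(q^{a-1}(q^{2}-q-1)+1)/(q+1)\geq 1$ for $q\geq 2$, whereas $|(-1)^{a+b}q^{-b-1}|\leq 1/q\leq 1/2$, so $D\geq 1/2$; if $a$ is odd then $a\geq 3$, and $(q-1)q^{a-1}\geq 4$ and $B_{a}>0$, so $D\geq 4-1/2>0$.

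The main obstacle is the algebraic simplification itself: the expression $g(a+1,b)-g(a,b)(q^{b+a+1}+(-1)^{b+a})$ initially contains dominant terms of order $q^{ab+\binom{a+1}{2}+b+a+1}$ and sub-dominant terms of order $q^{ab+\binom{a+1}{2}}$, and both layers of cancellations must be executed (the first by matching leading coefficients, the second by invoking $B_{a+1}=qB_{a}+(-1)^{a}$) before the residual polynomial inequality $D\geq 0$ emerges in manageable form.
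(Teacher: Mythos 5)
Your proof is correct and follows essentially the same route as the paper's: induction on $a$ with the identical base case $a=2$ (which indeed holds with equality), multiplication by the next factor $q^{b+a+1}-(-1)^{b+a+1}$, and the same telescoping identity $B_{a+1}=qB_a+(-1)^a$ to absorb the cross term into the middle summand. The only difference is bookkeeping at the end — you verify nonnegativity of the residual $D$ by a parity case analysis, whereas the paper absorbs the leftover terms by weakening $-q^{-2b-a-1}$ to $-q^{-2b-a-2}$ — and both closings are valid.
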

\begin{proof}
    We prove this result using induction on $a$. For the base case $a=2$ we find
    \begin{align*}
        \varphi^{-}_{b+1,b+2}(q)&=\left(q^{b+1}-(-1)^{b+1}\right)\left(q^{b+2}-(-1)^{b+2}\right)\\
        &=q^{2b+3}\left(1-(-1)^{b+1}q^{-b-1}-(-1)^{b+2}q^{-b-2}-q^{-2b-3}\right)\\
        &=q^{2b+3}\left(1+(-1)^{b}q^{-b-2}(q-1)-q^{-2b-3}\right)\\
        &\leq q^{2b+\binom{3}{2}}\left(1+(-1)^{b}q^{-b-2}\frac{q^{2}-(-1)^{2}}{q+1}-q^{-2b-3}\right)\;.
    \end{align*}
    We now prove the induction step using the induction hypothesis. For $a\geq 2$ we have:
    \begin{align*}
        \varphi^{-}_{b+1,b+a+1}(q)&=\varphi^{-}_{b+1,b+a}(q)\left(q^{b+a+1}-(-1)^{b+a+1}\right)\\
        &\leq q^{ab+\binom{a+1}{2}}\left(1+(-1)^{b}q^{-b-a}\frac{q^{a}-(-1)^{a}}{q+1}-q^{-2b-a-1}\right)\left(q^{b+a+1}-(-1)^{b+a+1}\right)\\
        &=q^{(a+1)b+\binom{a+2}{2}}\left(1+(-1)^{b}q^{-b-a}\frac{q^{a}-(-1)^{a}}{q+1}-q^{-2b-a-1}\right)\left(1-(-1)^{b+a+1}q^{-b-a-1}\right)\\
        &=q^{(a+1)b+\binom{a+2}{2}}\left(1+(-1)^{b}q^{-b-a}\frac{q^{a}-(-1)^{a}}{q+1}-q^{-2b-a-1}-(-1)^{b+a+1}q^{-b-a-1}\right.\\&\qquad\qquad\qquad\qquad\left.-(-1)^{a+1}q^{-2b-2a-1}\frac{q^{a}-(-1)^{a}}{q+1}+(-1)^{b+a+1}q^{-3b-2a-2}\right)\\
        &=q^{(a+1)b+\binom{a+2}{2}}\left(1+(-1)^{b}q^{-b-a-1}\frac{q^{a+1}-(-1)^{a+1}}{q+1}-q^{-2b-a-1}\right.\\&\qquad\qquad\qquad\qquad\left.-(-1)^{a+1}q^{-2b-2a-1}\frac{q^{a}-(-1)^{a}}{q+1}+(-1)^{b+a+1}q^{-3b-2a-2}\right)\\
        &\leq q^{(a+1)b+\binom{a+2}{2}}\left(1+(-1)^{b}q^{-b-a-1}\frac{q^{a+1}-(-1)^{a+1}}{q+1}-q^{-2b-a-2}\right)\:.
    \end{align*}
    The last transition follows from
    \begin{align*}
        q^{-3b-2a-2}\left((-1)^{a}q^{b+1}\frac{q^{a}-(-1)^{a}}{q+1}+(-1)^{b+a+1}\right)&\leq q^{-3b-2a-2}\left(q^{b+1}\frac{q^{a}-1}{q+1}+1\right)\\
        &=q^{-3b-2a-2}\left(\frac{q^{a+b+1}-q^{b+1}+q+1}{q+1}\right)\\
        &\leq q^{-3b-2a-2}\left(\frac{q^{a+b+1}+1}{q+1}\right)\\
        &\leq q^{-3b-2a-2}\left(\frac{q^{a+b+1}+q^{a+b}}{q+1}\right)\\
        &=q^{-2b-a-2}\\
        &\leq q^{-2b-a-1}-q^{-2b-a-2}\:,
    \end{align*}
    since $q\geq2$.
\end{proof}

\begin{lemma}\label{lem:phiminlowerbound}
    For integers $a,b$ with $a\geq 2$ and $b\geq0$ we have
    \[
        \varphi^{-}_{b+1,b+a}(q)\geq q^{ab+\binom{a+1}{2}}\left(1+(-1)^{b}q^{-b-1}-(-1)^{b}q^{-b-2}-q^{-b-3}-q^{-2b-3}\right)\;.
    \]
\end{lemma}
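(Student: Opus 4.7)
The plan is to prove the lower bound by induction on $a \geq 2$, closely mirroring the inductive structure of Lemma \ref{lem:phiminupperbound} but with a delicate case analysis driven by the parity of $b+a$.

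For the base case $a=2$, direct expansion gives
\[
\varphi^{-}_{b+1,b+2}(q) = (q^{b+1}+(-1)^b)(q^{b+2}-(-1)^b) = q^{2b+\binom{3}{2}}\bigl(1+(-1)^b q^{-b-1}-(-1)^b q^{-b-2}-q^{-2b-3}\bigr),
\]
which exceeds the claimed right-hand side by $q^{2b+3}\cdot q^{-b-3}$, so the inequality holds.

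For the inductive step from $a$ to $a+1$, we write $\varphi^{-}_{b+1,b+a+1}(q) = \varphi^{-}_{b+1,b+a}(q)\cdot(q^{b+a+1}-(-1)^{b+a+1})$ and split on the parity of $b+a$. When $b+a$ is \emph{even}, the trailing factor equals $q^{b+a+1}+1 \geq q^{b+a+1}$, and combining this with the inductive hypothesis and the exponent identity $ab+\binom{a+1}{2}+(b+a+1) = (a+1)b+\binom{a+2}{2}$ immediately yields the desired bound.

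The hard case is $b+a$ \emph{odd}, where the trailing factor $q^{b+a+1}-1$ falls short of $q^{b+a+1}$ and the one-step argument loses an unwanted factor of $1-q^{-b-a-1}$. The fix is to backtrack one step: apply the inductive hypothesis to $\varphi^{-}_{b+1,b+a-1}(q)$ (which requires $a \geq 3$) and multiply by the two-factor block
\[
(q^{b+a}+1)(q^{b+a+1}-1) = q^{2b+2a+1}+q^{b+a}(q-1)-1 \geq q^{2b+2a+1},
\]
valid for $q\geq 2$ and $b+a\geq 1$. The identity $\binom{a}{2}+2a+1 = \binom{a+2}{2}$ then matches the exponents, completing this case. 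Since the backtracking only works for $a\geq 3$, an additional base case at $a=3$ is required: a direct expansion of $\varphi^{-}_{b+1,b+3}(q) = \varphi^{-}_{b+1,b+2}(q)(q^{b+3}+(-1)^b)$ and comparison with $q^{3b+6}$ times the claimed bracket reduces to verifying the non-negativity of explicit quantities such as $2q^{-b-3}+q^{-2b-4}(1-q^{-1})-q^{-3b-6}$ (for $b$ even) or $q^{-2b-4}(1-q^{-1})+q^{-3b-6}$ (for $b$ odd), each manifest for $q\geq 2$ and $b\geq 0$. The main obstacle is the parity-dependent behaviour of the trailing factor in the inductive step; once the two-factor backtracking trick is in place, the remainder is a bookkeeping exercise on powers of $q$.
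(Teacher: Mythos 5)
Your proof is correct and follows essentially the same route as the paper: induction on $a$ with base cases $a=2$ and $a=3$, a single-step inductive argument when the new factor has the form $q^{b+a+1}+1$, and a two-factor backtracking step (using the hypothesis at $a-1$) when it has the form $q^{b+a+1}-1$. The only difference is that you write out the base cases explicitly where the paper calls them immediate.
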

\begin{proof}
    We prove this result using induction on $a$. The base cases $a=2$ and $a=3$ are immediate. We now prove the induction step using the induction hypothesis. If $b+a+1\geq4$ is odd, then
    \begin{align*}
        \phi^{-}_{b+1,b+a+1}(q)&=\phi^{-}_{b+1,b+a}(q)\left(q^{b+a+1}-(-1)^{b+a+1}\right)=\phi^{-}_{b+1,b+a}(q)q^{b+a+1}\left(1+q^{-b-a-1}\right)\\
        &\geq\phi^{-}_{b+1,b+a}(q)q^{b+a+1}\\
        &\geq q^{(a+1)b+\binom{a+2}{2}}\left(1+(-1)^{b}q^{-b-1}-(-1)^{b}q^{-b-2}-q^{-b-3}-q^{-2b-3}\right)\:.
    \end{align*}
    If $b+a+1\geq5$ is even, then
    \begin{align*}
        \phi^{-}_{b+1,b+a+1}(q)&=\phi^{-}_{b+1,b+a-1}(q)\left(q^{b+a}-(-1)^{b+a}\right)\left(q^{b+a+1}-(-1)^{b+a+1}\right)\\
        &=\phi^{-}_{b+1,b+a-1}(q)q^{2b+2a+1}\left(1+q^{-b-a}-q^{-b-a-1}-q^{-2b-2a-1}\right)\\
        &\geq\phi^{-}_{b+1,b+a-1}(q)q^{2b+2a+1}\\
        &\geq q^{(a+1)b+\binom{a+2}{2}}\left(1+(-1)^{b}q^{-b-1}-(-1)^{b}q^{-b-2}-q^{-b-3}-q^{-2b-3}\right)\:.\qedhere
    \end{align*}
\end{proof}

\begin{lemma}\label{lem:phispluslowerbound}
    For integers $a\geq2$ we have
    \[
        \phi^{+}_{1,a}(q)\geq q^{\binom{a+1}{2}}\left(1-\frac{1}{q}+\frac{1}{q^2}-\frac{2}{q^{3}}\right)\:.
    \]
\end{lemma}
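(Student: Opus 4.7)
The plan is to mirror the inductive strategy used in the proof of Lemma \ref{lem:phiminlowerbound}, exploiting the same parity-based case split. Writing $\phi^{+}_{1,a}(q)=\prod_{k=1}^{a}(q^{k}+(-1)^{k})$, one sees that when the incoming index $a+1$ is even the factor $q^{a+1}+1$ is ``safe'' (it exceeds $q^{a+1}$), while if $a+1$ is odd the factor $q^{a+1}-1$ falls short of $q^{a+1}$ and must be paired with its predecessor $q^{a}+1$ to restore the estimate.

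First I would dispatch the base cases $a=2$ and $a=3$ by direct expansion. One obtains $\phi^{+}_{1,2}(q)=q^{3}-q^{2}+q-1$ and $\phi^{+}_{1,3}(q)=q^{6}-q^{5}+q^{4}-2q^{3}+q^{2}-q+1$, and in each case the difference with $q^{\binom{a+1}{2}}\left(1-q^{-1}+q^{-2}-2q^{-3}\right)$ equals $1$ and $q^{2}-q+1$ respectively, both nonnegative for all prime powers $q\geq 2$.

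For the inductive step, assume the statement for all indices $\leq a$ and prove it at $a+1\geq 4$. If $a+1$ is even, use the one-step recursion
\[
\phi^{+}_{1,a+1}(q)=\phi^{+}_{1,a}(q)\cdot (q^{a+1}+1)\geq \phi^{+}_{1,a}(q)\cdot q^{a+1},
\]
apply the induction hypothesis at $a$, and combine with the identity $\binom{a+1}{2}+(a+1)=\binom{a+2}{2}$. If $a+1\geq 5$ is odd, I would step instead from $a-1$ using the two-factor recursion
\[
\phi^{+}_{1,a+1}(q)=\phi^{+}_{1,a-1}(q)\cdot(q^{a}+1)(q^{a+1}-1).
\]
The key elementary bound here is
\[
(q^{a}+1)(q^{a+1}-1)=q^{2a+1}+q^{a}(q-1)-1\geq q^{2a+1},
\]
which holds because $q^{a}(q-1)\geq 1$ whenever $q\geq 2$ and $a\geq 1$. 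Applying the induction hypothesis to $\phi^{+}_{1,a-1}(q)$ and using $\binom{a}{2}+(2a+1)=\binom{a+2}{2}$ then yields the claimed inequality.

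The main obstacle is essentially bookkeeping: the alternating signs in $\varphi^{+}$ rule out a one-step induction in half the cases, so the two-step jump through the pair of consecutive factors is needed. Once the pairing inequality $(q^{a}+1)(q^{a+1}-1)\geq q^{2a+1}$ is noted, no delicate secondary estimates of the kind appearing in Lemma \ref{lem:phiminupperbound} are required, and the argument closes cleanly on the two base values $a\in\{2,3\}$.
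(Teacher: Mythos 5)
Your proof is correct and follows essentially the same route as the paper: a two-case induction on the parity of $a+1$, using the one-step recursion $\phi^{+}_{1,a+1}=\phi^{+}_{1,a}(q^{a+1}+1)\geq\phi^{+}_{1,a}q^{a+1}$ in the even case and the paired two-factor bound $(q^{a}+1)(q^{a+1}-1)\geq q^{2a+1}$ from the index $a-1$ in the odd case, anchored at the base cases $a=2,3$. No gaps; the details you supply (base-case differences $1$ and $q^{2}-q+1$, the binomial identities) all check out.
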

\begin{proof}
    We prove this result using induction on $a$. The base cases $a=2$ and $a=3$ are immediate. We now prove the induction step using the induction hypothesis. If $a+1\geq4$ is even, then
    \begin{align*}
        \phi^{+}_{1,a+1}(q)&=\phi^{+}_{1,a}(q)\left(q^{a+1}+(-1)^{a+1}\right)=\phi^{+}_{1,a}(q)q^{a+1}\left(1+q^{-a-1}\right)\geq\phi^{+}_{1,a}(q)q^{a+1}\\
        &\geq q^{\binom{a+2}{2}}\left(1-\frac{1}{q}+\frac{1}{q^2}-\frac{2}{q^{3}}\right)\:.
    \end{align*}
    If $a+1\geq5$ is odd, then
    \begin{align*}
        \phi^{+}_{1,a+1}(q)&=\phi^{+}_{1,a-1}(q)\left(q^{a}+(-1)^{a}\right)\left(q^{a+1}+(-1)^{a+1}\right)\\
        &=\phi^{+}_{1,a-1}(q)q^{2a+1}\left(1+q^{a}-q^{-a-1}-q^{-2a-1}\right)\\
        &\geq\phi^{+}_{1,a-1}(q)q^{2a+1}\\
        &\geq q^{\binom{a+2}{2}}\left(1-\frac{1}{q}+\frac{1}{q^2}-\frac{2}{q^{3}}\right)\:.\qedhere
    \end{align*}
\end{proof}

\section{The hermitian case}\label{sec:hermitian}
\subsection{Proof of the main result (hermitian)}

We first introduce some notation.

\begin{definition}
	Given a non-degenerate hermitian form $f$ on $\F_{q^{2}}^{n}$, we define $\alpha_{i,j,n}$ as the number of $i$-singular $j$-spaces with respect to $f$.
\end{definition}

The unitary group $\PGU(n,q^2)$, i.e.~the group of all linear maps preserving the non-degenerate hermitian form up to scalar multiple, acts transitively on the $i$-singular $j$-spaces (see \cite[Theorem 2.22]{ht} or \cite[Theorem 5.8]{wanbook}), so the following is well-defined.

\begin{definition}
	Consider a non-degenerate hermitian form $f$ on $\F_{q^{2}}^{n}$. 
	\begin{itemize}
		\item Given an $i$-singular $j$-space $\pi$ in $\F_{q^{2}}^{n}$, $\beta_{i,j,n,k}$ is the number of non-singular $k$-spaces $\sigma\supseteq\pi$ in $\F_{q^{2}}^{n}$.
		\par For the case of hyperplanes, $k=n-1$, we use the notation $\beta_{i,j,n}=\beta_{i,j,n,n-1}$.
		\item Given an $i$-singular $j$-space $\pi$ in $\F_{q^{2}}^{n}$, $\gamma_{i,j,n,k}$ is the number of non-singular $k$-spaces $\sigma$ in $\F_{q^{2}}^{n}$ such that $\sigma\cap\pi$ is trivial, and $\langle\pi,\sigma\rangle$ is a non-singular $(k+j)$-space.
		\par For the case $k=n-j$, where the two spaces span $\F_{q^{2}}^{n}$, we use the notation $\gamma_{i,j,n}=\gamma_{i,j,n,n-j}$.
	\end{itemize}
\end{definition}

The main goal of the first subsection is to determine a formula for $\gamma_{i,j,n,k}$. We will first establish the values of $\alpha_{i,j,n}$ and $\beta_{i,j,n}$. The formula for $\alpha_{i,j,n}$ is known in the literature.

\begin{lemma}[{\cite[Theorem 2.23]{ht}, \cite[Theorem 5.19]{wanbook}}]\label{lem:alphahermitian}
	For $0\leq i\leq \min\{j,n-j\}$ and $j\leq n$ we have that
	\[
		\alpha_{i,j,n}=q^{(j-i)(n-j-i)}\frac{\varphi^{-}_{j-i+1,n}(q)}{\varphi^{-}_{1,n-j-i}(q)\psi^{-}_{1,i}(q^2)}\:.
	\]
\end{lemma}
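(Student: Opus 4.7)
The plan is to reduce $\alpha_{i,j,n}$ to two independent counts by using the radical of $\pi$ with respect to $f$. Concretely, I would first establish the factorisation
\[
    \alpha_{i,j,n}=\alpha_{i,i,n}\cdot\alpha_{0,j-i,n-2i}\:,
\]
and then compute the two factors (the number of totally isotropic $i$-spaces in the ambient hermitian space, and the number of non-singular $(j-i)$-spaces of a non-degenerate hermitian form in dimension $n-2i$) by independent inductive arguments.

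For the factorisation, the key observation is that for any $i$-singular $j$-space $\pi$, its radical $R(\pi)=\pi\cap\pi^{\perp}$ is a totally isotropic $i$-space, and $\pi\subseteq R(\pi)^{\perp}$. Since $R(\pi)$ is exactly the radical of the restriction of $f$ to $R(\pi)^{\perp}$, the induced form on $R(\pi)^{\perp}/R(\pi)$ is non-degenerate hermitian of dimension $n-2i$, and $\pi/R(\pi)$ is a non-singular $(j-i)$-space of this quotient. This assignment $\pi\mapsto(R(\pi),\pi/R(\pi))$ is reversible: from any totally isotropic $i$-space $R$ and any non-singular $(j-i)$-space $\bar{\pi}$ in $R^{\perp}/R$ one recovers $\pi$ as the pullback of $\bar{\pi}$ to $R^{\perp}$. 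Combined with the transitivity of $\PGU(n,q^{2})$ on totally isotropic $i$-spaces, this yields the factorisation.

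To count the two factors I would employ standard flag-counting arguments. For non-singular $m$-spaces in a non-degenerate hermitian space of dimension $N$, I would count pairs consisting of a non-isotropic vector $v$ and a non-singular $m$-space containing $\langle v\rangle$; since $v^{\perp}$ is a non-degenerate hermitian $(N-1)$-space, this yields the recursion
\[
    \alpha_{0,m,N}=\frac{\alpha_{0,1,N}\cdot\alpha_{0,m-1,N-1}}{\alpha_{0,1,m}}\:,
\]
which solves by induction on $m$. For totally isotropic $i$-spaces, I would count pairs of an isotropic point $p$ and a totally isotropic $i$-space containing $p$; since $p^{\perp}/p$ is a non-degenerate hermitian $(N-2)$-space, this yields
\[
    \alpha_{i,i,N}=\alpha_{1,1,N}\cdot\alpha_{i-1,i-1,N-2}\cdot\frac{q^{2}-1}{q^{2i}-1}\:,
\]
which solves by induction on $i$. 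The two base cases $\alpha_{0,1,N}$ and $\alpha_{1,1,N}$ follow from a direct count of isotropic vectors, using the fact that the norm map $\F_{q^{2}}^{\ast}\to\F_{q}^{\ast}$ is surjective with kernel of order $q+1$. Combining the two resulting closed forms and telescoping the product $\prod_{k=n-2i+1}^{n}(q^{k}-(-1)^{k})\cdot\prod_{k=j-i+1}^{n-2i}(q^{k}-(-1)^{k})=\varphi^{-}_{j-i+1,n}(q)$ then reorganises into the expression of the lemma.

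The main obstacle I anticipate is the bookkeeping of the parity-dependent factors $q^{k}-(-1)^{k}$: verifying that the two recursions solve to the correct product forms requires carrying parity-split induction hypotheses, and the final combination of the two factors requires careful reindexing so that the pieces telescope to exactly $\varphi^{-}_{j-i+1,n}(q)/[\varphi^{-}_{1,n-j-i}(q)\,\psi^{-}_{1,i}(q^{2})]$ together with the correct power $q^{(j-i)(n-j-i)}$. An alternative route would be to use transitivity and write $\alpha_{i,j,n}=|\PGU(n,q^{2})|/|\mathrm{Stab}(\pi)|$, computing the stabiliser via its Levi decomposition (a unipotent radical, a $\GL(i,q^{2})$ factor acting on $R(\pi)$, and the unitary group on the non-singular complement of $\pi/R(\pi)$ in $R(\pi)^{\perp}/R(\pi)$); but this merely trades the recursive $\varphi^{-}$-bookkeeping for the analogous bookkeeping in the order formulae for the classical groups involved.
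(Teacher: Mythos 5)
The paper does not actually prove Lemma \ref{lem:alphahermitian}: it is imported from \cite[Theorem 2.23]{ht} and \cite[Theorem 5.19]{wanbook}, so there is no in-paper argument to compare against. Your proposal is a correct, self-contained derivation. The factorisation $\alpha_{i,j,n}=\alpha_{i,i,n}\,\alpha_{0,j-i,n-2i}$ holds: for $v\in\pi\subseteq R^{\perp}$ the coset $v+R$ lies in the radical of $\pi/R$ exactly when $v\in\pi\cap\pi^{\perp}=R$, so $\pi\mapsto(R(\pi),\pi/R(\pi))$ is a bijection, and every quotient $R^{\perp}/R$ carries a non-degenerate hermitian form of dimension $n-2i$, all of which are equivalent. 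Both double-counting recursions are sound; for the first, the key point is that a non-singular $\sigma$ containing a vector $v$ with $f(v,v)\neq0$ splits as $\langle v\rangle\perp(\sigma\cap v^{\perp})$ with both summands non-singular, which gives the required bijection with non-singular $(m-1)$-spaces of $v^{\perp}$. I checked that the recursions telescope to $\alpha_{i,i,n}=\varphi^{-}_{n-2i+1,n}(q)/\psi^{-}_{1,i}(q^{2})$ and $\alpha_{0,m,N}=q^{m(N-m)}\varphi^{-}_{m+1,N}(q)/\varphi^{-}_{1,N-m}(q)$, and that their product with $m=j-i$, $N=n-2i$ reorganises (via $\varphi^{-}_{n-2i+1,n}\varphi^{-}_{j-i+1,n-2i}=\varphi^{-}_{j-i+1,n}$ and $\varphi^{-}_{1,(n-2i)-(j-i)}=\varphi^{-}_{1,n-j-i}$) to exactly the stated formula, so the parity bookkeeping you were worried about does work out. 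Your "alternative route" via orbit--stabiliser is essentially what \cite{wanbook} does; your primary route is closer in spirit to how the paper itself derives its new counts $\beta$ and $\gamma$ (double counting plus induction), so either is a legitimate replacement for the citation.
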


It is not too hard to use a double counting argument to deduce the value for $\beta_{i,j,n,k}$ from that of $\alpha_{i,j,n}$.

\begin{lemma}\label{lem:betahermitian}
	For $0\leq i\leq \min\{j,n-j\}$ and $j+i\leq k\leq n-1$ we have that
	\[
		\beta_{i,j,n,k}=
		q^{(n-k)(k-j+i)}\gs{n-j-i}{n-k}{q}^{-}\:.
	\]
\end{lemma}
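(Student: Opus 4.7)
The plan is to establish the formula for $\beta_{i,j,n,k}$ by a standard double counting argument, using the already-known value of $\alpha_{i,j,n}$ given in Lemma \ref{lem:alphahermitian}. Specifically, I would count in two ways the number of incident pairs $(\pi,\sigma)$ where $\pi$ is an $i$-singular $j$-space and $\sigma$ is a non-singular $k$-space with $\pi\subseteq\sigma$. Counting via $\pi$ gives $\alpha_{i,j,n}\cdot\beta_{i,j,n,k}$ such pairs. For the other direction, I would note that the restriction of a non-degenerate hermitian form $f$ to a non-singular $k$-space $\sigma$ is a non-degenerate hermitian form on an $\F_{q^{2}}$-space of dimension $k$; since (up to equivalence) such a form is unique, the number of $i$-singular $j$-subspaces inside $\sigma$ is precisely $\alpha_{i,j,k}$. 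This yields
\[
    \beta_{i,j,n,k}=\frac{\alpha_{0,k,n}\,\alpha_{i,j,k}}{\alpha_{i,j,n}}\:.
\]

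It then remains to verify that this quotient simplifies to $q^{(n-k)(k-j+i)}\gs{n-j-i}{n-k}{q}^{-}$. The power of $q$ reduces via
\[
    k(n-k)+(j-i)(k-j-i)-(j-i)(n-j-i)=(n-k)(k-j+i)\:,
\]
and the $\psi^{-}_{1,i}(q^{2})$ factors in the two copies of $\alpha$ cancel. The remaining $\varphi^{-}$-factors simplify using the concatenation identity $\varphi^{-}_{a,b}(q)\varphi^{-}_{b+1,c}(q)=\varphi^{-}_{a,c}(q)$: the numerators combine via $\varphi^{-}_{j-i+1,k}(q)\varphi^{-}_{k+1,n}(q)=\varphi^{-}_{j-i+1,n}(q)$, cancelling the $\varphi^{-}_{j-i+1,n}(q)$ coming from $\alpha_{i,j,n}$ in the denominator; and the denominator reshuffles via $\varphi^{-}_{1,n-k}(q)\varphi^{-}_{n-k+1,n-j-i}(q)=\varphi^{-}_{1,n-j-i}(q)$, matching the definition of $\gs{n-j-i}{n-k}{q}^{-}$.

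The only non-bookkeeping point is justifying that the restriction argument is valid, i.e.\ that for a non-singular $k$-space $\sigma$ the induced form is again non-degenerate hermitian, so that Lemma \ref{lem:alphahermitian} applies to $\sigma$ with $n$ replaced by $k$; this follows directly from the definition of non-singular (empty radical) together with the classification of non-degenerate hermitian forms on $\F_{q^{2}}^{k}$. I do not anticipate any genuine obstacle: the argument is a routine Anzahl-style double count, and the main care required is purely algebraic, namely keeping the index ranges of the $\varphi^{-}$-products aligned during the simplification above.
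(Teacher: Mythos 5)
Your proposal is correct and follows essentially the same route as the paper: the identical double count $\alpha_{i,j,n}\beta_{i,j,n,k}=\alpha_{0,k,n}\alpha_{i,j,k}$ followed by simplification via Lemma \ref{lem:alphahermitian}, with the exponent and $\varphi^{-}$-product bookkeeping checking out (your decomposition $\varphi^{-}_{1,n-k}\varphi^{-}_{n-k+1,n-j-i}$ yields $\gs{n-j-i}{k-j-i}{q}^{-}$, which equals $\gs{n-j-i}{n-k}{q}^{-}$ by the symmetry of this coefficient). Your added remark on why the restricted form on $\sigma$ is again non-degenerate hermitian is a point the paper leaves implicit.
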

\begin{proof}
	We count the tuples $(\pi,\sigma)$ such that $\pi$ is an $i$-singular $j$-space and $\sigma\supseteq\pi$ is a non-singular $k$-space, in two ways. We find that 
	\[
		\alpha_{i,j,n}\beta_{i,j,n,k}=\alpha_{0,k,n}\alpha_{i,j,k}\:.
	\]
	Hence, using Lemma \ref{lem:alphahermitian} we find
	\begin{align*}
		\beta_{i,j,n,k}&=\frac{\alpha_{0,k,n}\:\alpha_{i,j,k}}{\alpha_{i,j,n}}\\
		&=q^{k(n-k)}\frac{\varphi^{-}_{k+1,n}(q)}{\varphi^{-}_{1,n-k}(q)}\cdot\frac{q^{(j-i)(k-j-i)}\varphi^{-}_{j-i+1,k}(q)}{\varphi^{-}_{1,k-j-i}(q)\psi^{-}_{1,i}(q^2)}\cdot\frac{\varphi^{-}_{1,n-j-i}(q)\psi^{-}_{1,i}(q^2)}{q^{(j-i)(n-j-i)}\varphi^{-}_{j-i+1,n}(q)}\\
		&=q^{(n-k)(k-j+i)}\frac{\varphi^{-}_{k+1,n}(q)\:\varphi^{-}_{j-i+1,k}(q)\:\varphi^{-}_{1,n-j-i}(q)}{\varphi^{-}_{1,n-k}(q)\:\varphi^{-}_{1,k-j-i}(q)\:\varphi^{-}_{j-i+1,n}(q)}\\
		&=q^{(n-k)(k-j+i)}\frac{\varphi^{-}_{1,n-j-i}(q)}{\varphi^{-}_{1,n-k}(q)\:\varphi^{-}_{1,k-j-i}(q)}\\
		&=q^{(n-k)(k-j+i)}\frac{\varphi^{-}_{k-j-i+1,n-j-i}}{\varphi^{-}_{1,n-k}}\\
		&=q^{(n-k)(k-j+i)}\gs{n-j-i}{n-k}{q}^{-}\:.\qedhere
	\end{align*}
\end{proof}

\begin{corollary}\label{cor:betahermitian}
	For $0\leq i\leq \min\{j,n-j\}$ and $j\leq n-1$ we have that
	\[
		\beta_{i,j,n}=q^{n-j+i-1}\frac{q^{n-i-j}-(-1)^{n-i-j}}{q+1}\:.
	\]
\end{corollary}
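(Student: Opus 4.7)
The plan is to obtain this as an immediate specialisation of Lemma \ref{lem:betahermitian}. Since $\beta_{i,j,n}$ is defined as $\beta_{i,j,n,n-1}$, I will substitute $k = n-1$ into the formula given by that lemma.

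First I will evaluate the exponent $q^{(n-k)(k-j+i)}$ at $k=n-1$: this gives $(n-k) = 1$ and so the factor simplifies to $q^{n-1-j+i}$. Next I will simplify the $q$-binomial factor $\gs{n-j-i}{n-k}{q}^{-}$ at $k=n-1$, which becomes $\gs{n-j-i}{1}{q}^{-}$. By the definition of the modified Gaussian binomial coefficient we have
\[
\gs{n-j-i}{1}{q}^{-} = \frac{\varphi^{-}_{n-j-i,n-j-i}(q)}{\varphi^{-}_{1,1}(q)} = \frac{q^{n-j-i}-(-1)^{n-j-i}}{q+1},
\]
using that $\varphi^{-}_{1,1}(q) = q-(-1) = q+1$ and $\varphi^{-}_{a,a}(q) = q^a - (-1)^a$.

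Combining the two factors yields exactly
\[
\beta_{i,j,n} = q^{n-j+i-1}\,\frac{q^{n-i-j}-(-1)^{n-i-j}}{q+1},
\]
and the range restrictions ($0\leq i\leq \min\{j,n-j\}$ and $j\leq n-1$) ensure that the substitution $k=n-1$ is admissible, i.e.\ $j+i \leq k = n-1$, which is equivalent to $j+i \leq n-1$, automatically satisfied since $i \leq n-j$ and $j \leq n-1$ together give $j+i \leq n$, with equality ruled out when one requires a proper hyperplane containing $\pi$ (and in the equality case the formula correctly returns $0$ via $\gs{0}{1}{q}^{-}=0$). There is no real obstacle here; the only small care needed is the correct evaluation of $\varphi^{-}_{1,1}(q)$ and $\varphi^{-}_{n-j-i,n-j-i}(q)$ from Definition \ref{def:functionsphipsichi}.
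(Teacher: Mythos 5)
Your proposal is correct and is exactly the intended derivation: the paper states this corollary without proof as the immediate specialisation $k=n-1$ of Lemma \ref{lem:betahermitian}, and your evaluation of $q^{(n-k)(k-j+i)}$ and of $\gs{n-j-i}{1}{q}^{-}=\frac{q^{n-j-i}-(-1)^{n-j-i}}{q+1}$ matches what that substitution yields. Your extra remark on the boundary case $i=n-j$ (where the formula correctly returns $0$) is a sensible addition, not a deviation.
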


Now we derive the formula for $\gamma_{i,j,n}$. In the proof, we first use a double counting argument to find a recursion relation \eqref{eq:hermitianinduction} for $\gamma_{i,j,n}$ based on the values for $\gamma_{i_0,j-1,n-1}$ for $i_0$ differing at most one from $i$. The coefficients are given by functions of $\alpha_{a,b,c}$ and $\beta_{d,e,f}$, which we have determined already. To show that our formula for $\gamma_{i,j,n}$ is correct, we then use induction on $j$.

\begin{theorem}\label{th:gammahermitian}
	For $0\leq i\leq \min\{j,n-j\}$ and $j\leq n-1$ we have that
	\[
	\gamma_{i,j,n}=q^{2j(n-j)-\binom{j+1}{2}}\varphi^{+}_{1,i}(q)\sum_{m=0}^{j-i}(-1)^{m(n-j)}\varphi^{+}_{i+1,j-m}(q)\gs{j-i}{m}{q}^{-}q^{\binom{m}{2}-m(n-j-i)}
	\:.
	\]
\end{theorem}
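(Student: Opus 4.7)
The plan is to prove Theorem \ref{th:gammahermitian} by induction on $j$, with the inductive step driven by a recursion expressing $\gamma_{i,j,n}$ as a linear combination of $\gamma_{i_0,j-1,n-1}$ for $i_0\in\{i-1,i,i+1\}$. To derive this recursion, I fix an $i$-singular $j$-space $\pi$ in $V=\F_{q^2}^{n}$ and double count pairs $(\sigma,H)$ with $H$ a non-singular hyperplane of $V$ and $\sigma\subseteq H$ a non-singular $(n-j)$-space trivially intersecting $\pi$. Counting by $\sigma$ first: since $\sigma+\pi=V$, any hyperplane through $\sigma$ fails to contain $\pi$, and the non-singular such hyperplanes correspond under $H\mapsto H^{\perp}$ to the non-isotropic 1-spaces of the non-singular $j$-space $\sigma^{\perp}$, whose count is $\beta_{0,n-j,n,n-1}$ by Lemma \ref{lem:betahermitian}, independent of $\sigma$. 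Counting by $H$ first: $\pi_0:=\pi\cap H$ has dimension $j-1$ and its singularity $i_0$ lies in $\{i-1,i,i+1\}$ (a short case analysis depending on whether $\mathrm{rad}(\pi)\subseteq\pi_0$), and inside the non-singular $(n-1)$-space $H$ the required count of $\sigma$ is exactly $\gamma_{i_0,j-1,n-1}$. Grouping by $i_0$ yields
\[
    \beta_{0,n-j,n,n-1}\cdot\gamma_{i,j,n}=\sum_{i_0\in\{i-1,i,i+1\}}N_{i_0}\cdot\gamma_{i_0,j-1,n-1},
\]
where $N_{i_0}$ counts non-singular hyperplanes $H$ with $\pi\cap H$ of type $(i_0,j-1)$.

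Each $N_{i_0}$ further decomposes as the number of $(j-1)$-subspaces of $\pi$ of type $i_0$ (which reduces via the non-singular quotient $\pi/\mathrm{rad}(\pi)$ to a count of the form $\alpha_{*,*,j-i}$ in a non-singular hermitian $(j-i)$-space) times the number of non-singular hyperplanes $H$ of $V$ with $H\cap\pi$ equal to a fixed $\pi_0$ of that type (which counts non-isotropic 1-spaces in $\pi_0^{\perp}$ outside $\pi^{\perp}$, again accessible via Lemma \ref{lem:alphahermitian}). This produces closed-form coefficients in terms of the functions of Definition \ref{def:functionsphipsichi}.

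The base case is $j=i$: the formula reduces to the single $m=0$ term, giving $\gamma_{i,i,n}=q^{2i(n-i)-\binom{i+1}{2}}\varphi^{+}_{1,i}(q)$, which can be checked directly as the number of non-singular complements to a given totally isotropic $i$-space in $\F_{q^2}^{n}$. For the inductive step, I substitute the induction hypothesis for each $\gamma_{i_0,j-1,n-1}$ into the recursion, re-index the three resulting sums so they range over a common index, and apply identities such as $\varphi^{+}_{i+1,j-m}(q)=(q^{j-m}+(-1)^{j-m})\varphi^{+}_{i+1,j-m-1}(q)$ together with the standard recurrences for $\gs{\cdot}{\cdot}{q}^{-}$ to collapse the combination into the single alternating sum in the theorem. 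The sign factor $(-1)^{m(n-j)}$ is invariant under the simultaneous shift $(n,j)\mapsto(n-1,j-1)$, which simplifies the bookkeeping.

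The main obstacle is precisely this algebraic verification: although the recursion has only three terms, each $\gamma_{i_0,j-1,n-1}$ contributes an entire alternating sum over $m$ (with shifts in both indices), and the coefficients $N_{i_0}/\beta_{0,n-j,n,n-1}$ are non-trivial products of the functions $\varphi^{\pm}$ and $\psi^{\pm}$. Matching the resulting expression to the single sum in the statement requires careful tracking of powers of $q$, of the parities $(-1)^{m(n-j)}$, and of the shifts $i_0-i\in\{-1,0,1\}$ in the $\varphi^{+}_{1,i_0}$ prefactors and in the $\varphi^{+}_{i_0+1,\cdot}$ factors appearing inside the sums.
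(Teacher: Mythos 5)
Your proposal follows essentially the same route as the paper: the identical double count of pairs consisting of a non-singular hyperplane and a non-singular $(n-j)$-space inside it disjoint from $\pi$, the same three-term recursion over $i_0\in\{i-1,i,i+1\}$ with coefficients assembled from the $\alpha$'s and $\beta$'s, and the same induction on $j$. The only differences are organizational: the paper's base case is simply $\gamma_{0,0,n}=1$ (the diagonal values $\gamma_{i,i,n}$ you propose to verify directly instead fall out of the recursion, which degenerates to a single term when $i=j$), and the lengthy algebraic collapse of the three shifted sums that you defer as ``the main obstacle'' is in fact the bulk of the paper's proof.
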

\begin{proof}
	We prove this theorem using induction on $j$. One can see directly that $\gamma_{0,0,n}=1$ for all $n\geq1$. Consider a hermitian form on $\F^{n}_{q^2}$ and a fixed $i$-singular $j$-space $\pi$ with respect to it, with $j\geq1$. We denote the totally isotropic $i$-space of $\pi$ by $\overline{\pi}$. We count the tuples $(\sigma,\tau)$ with $\sigma$ a non-singular hyperplane, and $\tau\subseteq\sigma$ a non-singular $(n-j)$-space disjoint from $\pi$. Note that $\langle\pi,\tau\rangle=\F^{n}_{q^2}$. On the one hand there are $\gamma_{i,j,n}$ choices for $\tau$, and for each of them $\beta_{0,n-j,n}$ corresponding tuples. Now, we look at the non-singular hyperplanes. Each non-singular hyperplane that contains an $(n-j)$-space disjoint from $\pi$ meets $\pi$ in a $(j-1)$-space, so we look at the $(j-1)$-spaces of $\pi$. There are three possibilities. 
	\begin{itemize}
		\item We first look at the $(j-1)$-spaces through $\overline{\pi}$ that are $(i+1)$-singular. The number of such $(j-1)$-spaces corresponds to the number of 1-singular $(j-i-1)$-spaces with respect to a non-degenerate hermitian form on a $(j-i)$-space, which is $\alpha_{1,j-i-1,j-i}$. Through such a $(j-1)$-space there are $\beta_{i+1,j-1,n}-\beta_{i,j,n}$ non-singular hyperplanes not containing $\pi$. For each of these hyperplanes we have $\gamma_{i+1,j-1,n-1}$ tuples. 
		\item Secondly we look at the $(j-1)$-spaces through $\overline{\pi}$ that are $i$-singular. The number of such $(j-1)$-spaces corresponds to the number of non-singular $(j-i-1)$-spaces with respect to a non-singular hermitian form on a $(j-i)$-space, which is $\alpha_{0,j-i-1,j-i}$. Through such a $(j-1)$-space there are $\beta_{i,j-1,n}-\beta_{i,j,n}$ non-singular hyperplanes not containing $\pi$. For each of these hyperplanes we have $\gamma_{i,j-1,n-1}$ tuples.
		\item Finally we look at the $(j-1)$-spaces that meet $\overline{\pi}$ in precisely an $(i-1)$-space. Necessarily, they are $(i-1)$-singular. The number of such $(j-1)$-spaces equals $\gs{j}{j-1}{q^{2}}-\gs{j-i}{j-i-1}{q^{2}}$. Through such a $(j-1)$-space there are $\beta_{i-1,j-1,n}-\beta_{i,j,n}$ non-singular hyperplanes not containing $\pi$. For each of these hyperplanes we have $\gamma_{i-1,j-1,n-1}$ tuples.
	\end{itemize}
	We find the following result:
	\begin{align}\label{eq:hermitianinduction}
		\gamma_{i,j,n}\beta_{0,n-j,n}&=\alpha_{1,j-i-1,j-i}\left(\beta_{i+1,j-1,n}-\beta_{i,j,n}\right)\gamma_{i+1,j-1,n-1}\nonumber\\&\quad+\alpha_{0,j-i-1,j-i}\left(\beta_{i,j-1,n}-\beta_{i,j,n}\right)\gamma_{i,j-1,n-1}\nonumber\\&\quad+\left(\gs{j}{j-1}{q^{2}}-\gs{j-i}{j-i-1}{q^{2}}\right)\left(\beta_{i-1,j-1,n}-\beta_{i,j,n}\right)\gamma_{i-1,j-1,n-1}\nonumber
        \\&=\frac{\left(q^{j-i}-(-1)^{j-i}\right)\left(q^{j-i-1}-(-1)^{j-i-1}\right)}{q^{2}-1}\left(\beta_{i+1,j-1,n}-\beta_{i,j,n}\right)\gamma_{i+1,j-1,n-1}\nonumber\\
		&\quad+q^{j-i-1}\left(\frac{q^{j-i}-(-1)^{j-i}}{q+1}\right)\left(\beta_{i,j-1,n}-\beta_{i,j,n}\right)\gamma_{i,j-1,n-1}\nonumber\\
		&\quad+q^{2(j-i)}\left(\frac{q^{2i}-1}{q^2-1}\right)\left(\beta_{i-1,j-1,n}-\beta_{i,j,n}\right)\gamma_{i-1,j-1,n-1}\:,
	\end{align}
	where we used Lemma \ref{lem:alphahermitian}. Note that this equality is also valid if $i=j$ or if $i=j-1$ or $i=0$. Then, only one or two of the cases appear, respectively. But the cases that do not appear, have a factor 0 in Equation \ref{eq:hermitianinduction}. Using Corollary \ref{cor:betahermitian} we know that
	\begin{align*}
		\beta_{i+1,j-1,n}-\beta_{i,j,n}&=q^{n-j+i+1}\frac{q^{n-i-j}-(-1)^{n-i-j}}{q+1}-q^{n-j+i-1}\frac{q^{n-i-j}-(-1)^{n-i-j}}{q+1}\\
		&=q^{n-j+i-1}\left(q^{n-i-j}-(-1)^{n-i-j}\right)\left(q-1\right)\\
		\beta_{i,j-1,n}-\beta_{i,j,n}&=q^{n-j+i}\frac{q^{n-i-j+1}-(-1)^{n-i-j+1}}{q+1}-q^{n-j+i-1}\frac{q^{n-i-j}-(-1)^{n-i-j}}{q+1}\\
		&=q^{n-j+i-1}\left(q^{n-i-j}(q-1)+(-1)^{n-i-j}\right)\\
		\beta_{i-1,j-1,n}-\beta_{i,j,n}&=q^{n-j+i-1}\frac{q^{n-i-j+2}-(-1)^{n-i-j+2}}{q+1}-q^{n-j+i-1}\frac{q^{n-i-j}-(-1)^{n-i-j}}{q+1}\\
		&=q^{n-j+i-1}q^{n-i-j}(q-1)
	\end{align*}
	Using the induction hypothesis we can now rewrite Equation \ref{eq:hermitianinduction} as follows:
	\begin{align*}
	  &\beta_{0,j,n}\gamma_{i,j,n}\\
        &=\frac{\left(q^{j-i}-(-1)^{j-i}\right)\left(q^{j-i-1}-(-1)^{j-i-1}\right)}{q^{2}-1}q^{n-j+i-1}\left(q^{n-i-j}-(-1)^{n-i-j}\right)\left(q-1\right)\\&\qquad\qquad q^{2(j-1)(n-j)-\binom{j}{2}}\varphi^{+}_{1,i+1}(q)\sum_{m=0}^{j-i-2}(-1)^{m(n-j)}\varphi^{+}_{i+2,j-m-1}(q)\gs{j-i-2}{m}{q}^{-}q^{\binom{m}{2}-m(n-j-i-1)}\\
		&\qquad+q^{j-i-1}\left(\frac{q^{j-i}-(-1)^{j-i}}{q+1}\right)q^{n-j+i-1}\left(q^{n-i-j}(q-1)+(-1)^{n-i-j}\right)\\
		&\qquad\qquad q^{2(j-1)(n-j)-\binom{j}{2}}\varphi^{+}_{1,i}(q)\sum_{m=0}^{j-i-1}(-1)^{m(n-j)}\varphi^{+}_{i+1,j-m-1}(q)\gs{j-i-1}{m}{q}^{-}q^{\binom{m}{2}-m(n-j-i)}\\
		&\qquad+q^{2(j-i)}\left(\frac{q^{2i}-1}{q^2-1}\right)q^{n-j+i-1}q^{n-i-j}(q-1)\\
		&\qquad\qquad q^{2(j-1)(n-j)-\binom{j}{2}}\varphi^{+}_{1,i-1}\sum_{m=0}^{j-i}(-1)^{m(n-j)}\varphi^{+}_{i,j-m-1}(q)\gs{j-i}{m}{q}^{-}q^{\binom{m}{2}-m(n-j-i+1)}\\
		&=q^{2(j-1)(n-j)-\binom{j}{2}+n-j+i-1}\frac{\varphi^{+}_{1,i}(q)}{q+1}\\
		&\quad\left(\vphantom{\gs{j-i-2}{m}{q}^{-}}\left(q^{j-i}-(-1)^{j-i}\right)\left(q^{j-i-1}-(-1)^{j-i-1}\right)\left(q^{n-i-j}-(-1)^{n-i-j}\right)\right.\\&\qquad\qquad \sum_{m=0}^{j-i-2}(-1)^{m(n-j)}\varphi^{+}_{i+1,j-m-1}(q)\gs{j-i-2}{m}{q}^{-}q^{\binom{m}{2}-m(n-j-i-1)}\\
		&\qquad+q^{j-i-1}\left(q^{j-i}-(-1)^{j-i}\right)\left(q^{n-i-j}(q-1)+(-1)^{n-i-j}\right)\\
		&\qquad\qquad \sum_{m=0}^{j-i-1}(-1)^{m(n-j)}\varphi^{+}_{i+1,j-m-1}(q)\gs{j-i-1}{m}{q}^{-}q^{\binom{m}{2}-m(n-j-i)}\\
		&\qquad+\left.\left(q^{2i}-1\right)q^{n+j-3i} \sum_{m=0}^{j-i}(-1)^{m(n-j)}\varphi^{+}_{i+1,j-m-1}(q)\gs{j-i}{m}{q}^{-}q^{\binom{m}{2}-m(n-j-i+1)}\right)\\	
		&=q^{2(j-1)(n-j)-\binom{j}{2}+n-j+i-1}\frac{\varphi^{+}_{1,i}(q)}{q+1}\\
		&\quad\left(\vphantom{\gs{j-i-2}{m}{q}^{-}}\left(q^{j-i}-(-1)^{j-i}\right)\left(q^{n-i-j}-(-1)^{n-i-j}\right)\right.\\&\qquad\quad \sum_{m=0}^{j-i-1}(-1)^{m(n-j)}\varphi^{+}_{i+1,j-m-1}(q)\gs{j-i-1}{m}{q}^{-}\left(q^{j-i-m-1}-(-1)^{j-i-m-1}\right)q^{\binom{m}{2}-m(n-j-i-1)}\\
		&\qquad+q^{j-i-1}\left(q^{j-i}-(-1)^{j-i}\right)\left(q^{n-i-j+1}-\left(q^{n-i-j}-(-1)^{n-i-j}\right)\right)\\
		&\qquad\quad \sum_{m=0}^{j-i-1}(-1)^{m(n-j)}\varphi^{+}_{i+1,j-m-1}(q)\gs{j-i-1}{m}{q}^{-}q^{\binom{m}{2}-m(n-j-i)}\\
		&\qquad+\left.\left(q^{2i}-1\right)q^{n+j-3i} \sum_{m=0}^{j-i}(-1)^{m(n-j)}\varphi^{+}_{i+1,j-m-1}(q)\gs{j-i}{m}{q}^{-}q^{\binom{m}{2}-m(n-j-i+1)}\right)\\
		&=q^{2(j-1)(n-j)-\binom{j}{2}+n-j+i-1}\frac{\varphi^{+}_{1,i}(q)}{q+1}\\
		&\quad\left(\vphantom{\gs{j-i-2}{m}{q}^{-}}q^{j-i-1}\left(q^{j-i}-(-1)^{j-i}\right)\left(q^{n-i-j}-(-1)^{n-i-j}\right)\right.\\&\qquad\quad \sum_{m=0}^{j-i-1}(-1)^{m(n-j)}\varphi^{+}_{i+1,j-m-1}(q)\gs{j-i-1}{m}{q}^{-}\left(q^{-m}-q^{-m}\right)q^{\binom{m}{2}-m(n-j-i-1)}\\
		&\qquad+(-1)^{j-i}\left(q^{j-i}-(-1)^{j-i}\right)\left(q^{n-i-j}-(-1)^{n-i-j}\right)\\&\qquad\quad \sum_{m=0}^{j-i-1}(-1)^{m(n-j-1)}\varphi^{+}_{i+1,j-m-1}(q)\gs{j-i-1}{m}{q}^{-}q^{\binom{m}{2}-m(n-j-i-1)}\\
		&\qquad+q^{n-2i}\left(q^{j-i}-(-1)^{j-i}\right)\sum_{m=0}^{j-i-1}(-1)^{m(n-j)}\varphi^{+}_{i+1,j-m-1}(q)\gs{j-i-1}{m}{q}^{-}q^{\binom{m}{2}-m(n-j-i)}\\
		&\qquad+\left.\left(q^{2i}-1\right)q^{n+j-3i} \sum_{m=0}^{j-i}(-1)^{m(n-j)}\varphi^{+}_{i+1,j-m-1}(q)\gs{j-i}{m}{q}^{-}q^{\binom{m}{2}-m(n-j-i+1)}\right)\\
		&=q^{2(j-1)(n-j)-\binom{j}{2}+n-j+i-1}\frac{\varphi^{+}_{1,i}(q)}{q+1}\\
		&\quad\left(\vphantom{\gs{j-i-2}{m}{q}^{-}}(-1)^{j-i}\left(q^{n-i-j}-(-1)^{n-i-j}\right)\right.\\&\qquad\quad \sum_{m=0}^{j-i-1}(-1)^{m(n-j-1)}\varphi^{+}_{i+1,j-m-1}(q)\gs{j-i}{m}{q}^{-}\left(q^{j-i-m}-(-1)^{j-i-m}\right)q^{\binom{m}{2}-m(n-j-i-1)}\\
		&\qquad+q^{n-2i}\\
		&\qquad\quad \sum_{m=0}^{j-i-1}(-1)^{m(n-j)}\varphi^{+}_{i+1,j-m-1}(q)\gs{j-i}{m}{q}^{-}\left(q^{j-i-m}-(-1)^{j-i-m}\right)q^{\binom{m}{2}-m(n-j-i)}\\
		&\qquad+\left.\left(q^{2i}-1\right)q^{n+j-3i} \sum_{m=0}^{j-i}(-1)^{m(n-j)}\varphi^{+}_{i+1,j-m-1}(q)\gs{j-i}{m}{q}^{-}q^{\binom{m}{2}-m(n-j-i+1)}\right)\\
		&=q^{2(j-1)(n-j)-\binom{j}{2}+n-j+i-1}\frac{\varphi^{+}_{1,i}(q)}{q+1}\\
		&\quad\left(\vphantom{\gs{j-i-2}{m}{q}^{-}}(-1)^{j-i}\left(q^{n-i-j}-(-1)^{n-i-j}\right)q^{j-i}\right.\\&\qquad\quad \sum_{m=0}^{j-i-1}(-1)^{m(n-j-1)}\varphi^{+}_{i+1,j-m-1}(q)\gs{j-i}{m}{q}^{-}q^{\binom{m}{2}-m(n-j-i)}\\
		&\qquad-\left(q^{n-i-j}-(-1)^{n-i-j}\right) \sum_{m=0}^{j-i-1}(-1)^{m(n-j)}\varphi^{+}_{i+1,j-m-1}(q)\gs{j-i}{m}{q}^{-}q^{\binom{m}{2}-m(n-j-i-1)}\\
		&\qquad+q^{n+j-3i}\sum_{m=0}^{j-i-1}(-1)^{m(n-j)}\varphi^{+}_{i+1,j-m-1}(q)\gs{j-i}{m}{q}^{-}q^{\binom{m}{2}-m(n-j-i+1)}\\
		&\qquad-(-1)^{j-i}q^{n-2i}\sum_{m=0}^{j-i-1}(-1)^{m(n-j-1)}\varphi^{+}_{i+1,j-m-1}(q)\gs{j-i}{m}{q}^{-}q^{\binom{m}{2}-m(n-j-i)}\\
		&\qquad+\left.\left(q^{2i}-1\right)q^{n+j-3i} \sum_{m=0}^{j-i}(-1)^{m(n-j)}\varphi^{+}_{i+1,j-m-1}(q)\gs{j-i}{m}{q}^{-}q^{\binom{m}{2}-m(n-j-i+1)}\right)\\
		&=q^{2(j-1)(n-j)-\binom{j}{2}+n-j+i-1}\frac{\varphi^{+}_{1,i}(q)}{q+1}\\
		&\quad\left(-(-1)^{n}q^{j-i}\sum_{m=0}^{j-i-1}(-1)^{m(n-j-1)}\varphi^{+}_{i+1,j-m-1}(q)\gs{j-i}{m}{q}^{-}q^{\binom{m}{2}-m(n-j-i)}\right.\\
		&\qquad-q^{n-i-j}\sum_{m=0}^{j-i-1}(-1)^{m(n-j)}\varphi^{+}_{i+1,j-m-1}(q)\gs{j-i}{m}{q}^{-}q^{\binom{m}{2}-m(n-j-i-1)}\\
		&\qquad+(-1)^{n-i-j} \sum_{m=0}^{j-i-1}(-1)^{m(n-j)}\varphi^{+}_{i+1,j-m-1}(q)\gs{j-i}{m}{q}^{-}q^{\binom{m}{2}-m(n-j-i-1)}\\
		&\qquad+\left.q^{n+j-i} \sum_{m=0}^{j-i}(-1)^{m(n-j)}\varphi^{+}_{i+1,j-m-1}(q)\gs{j-i}{m}{q}^{-}q^{\binom{m}{2}-m(n-j-i+1)}\right)\\
		&=q^{2(j-1)(n-j)-\binom{j}{2}+n-j+i-1}\frac{\varphi^{+}_{1,i}(q)}{q+1}\\
		&\quad\left(-(-1)^{n}\sum_{m=0}^{j-i-1}(-1)^{m(n-j-1)}\varphi^{+}_{i+1,j-m-1}(q)\gs{j-i}{m}{q}^{-}q^{\binom{m}{2}-m(n-j-i-1)}\left(q^{j-i-m}-(-1)^{j-i-m}\right)\right.\\
		&\qquad+\left.q^{n-i-j} \sum_{m=0}^{j-i}(-1)^{m(n-j)}\varphi^{+}_{i+1,j-m-1}(q)\gs{j-i}{m}{q}^{-}q^{\binom{m}{2}-m(n-j-i-1)}\left(q^{2(j-m)}-1\right)\right)\\
		&=q^{2(j-1)(n-j)-\binom{j}{2}+n-j+i-1}\frac{\varphi^{+}_{1,i}(q)}{q+1}\\
		&\quad\left((-1)^{n-1}\sum_{m=0}^{j-i-1}(-1)^{m(n-j-1)}\varphi^{+}_{i+1,j-m-1}(q)\gs{j-i}{m+1}{q}^{-}q^{\binom{m}{2}-m(n-j-i-1)}\left(q^{m+1}-(-1)^{m+1}\right)\right.\\
		&\qquad+q^{n-i-j}\sum_{m=0}^{j-i}(-1)^{m(n-j)}\varphi^{+}_{i+1,j-m-1}(q)\gs{j-i}{m}{q}^{-}q^{\binom{m}{2}-m(n-j-i-1)}\\&\qquad\qquad\qquad\qquad\qquad\left. \left(q^{j-m}+(-1)^{j-m}\right)\left(q^{j-m}-(-1)^{j-m}\right)\vphantom{\gs{j-i}{m}{q}^{-}}\right)\\
		&=q^{2(j-1)(n-j)-\binom{j}{2}+n-j+i-1}\frac{\varphi^{+}_{1,i}(q)}{q+1}\\
		&\quad\left((-1)^{n-1}\sum_{m=1}^{j-i}(-1)^{(m-1)(n-j-1)}\varphi^{+}_{i+1,j-m}(q)\gs{j-i}{m}{q}^{-}q^{\binom{m-1}{2}-(m-1)(n-j-i-1)}\left(q^{m}-(-1)^{m}\right)\right.\\
		&\qquad+\left.q^{n-i-j} \sum_{m=0}^{j-i}(-1)^{m(n-j)}\varphi^{+}_{i+1,j-m}(q)\gs{j-i}{m}{q}^{-}q^{\binom{m}{2}-m(n-j-i-1)}\left(q^{j-m}-(-1)^{j-m}\right)\right)\\
		&=q^{2(j-1)(n-j)-\binom{j}{2}+n-j+i-1}\frac{\varphi^{+}_{1,i}(q)}{q+1}\\
		&\quad\left((-1)^{j}q^{n-j-i}\sum_{m=1}^{j-i}(-1)^{m(n-j-1)}\varphi^{+}_{i+1,j-m}(q)\gs{j-i}{m}{q}^{-}q^{\binom{m}{2}-m(n-j-i-1)}\right.\\
		&\qquad-(-1)^{j}q^{n-j-i}\sum_{m=1}^{j-i}(-1)^{m(n-j)}\varphi^{+}_{i+1,j-m}(q)\gs{j-i}{m}{q}^{-}q^{\binom{m}{2}-m(n-j-i)}\\
		&\qquad+q^{n-i} \sum_{m=0}^{j-i}(-1)^{m(n-j)}\varphi^{+}_{i+1,j-m}(q)\gs{j-i}{m}{q}^{-}q^{\binom{m}{2}-m(n-j-i)}\\
		&\qquad-(-1)^{j}\left.q^{n-i-j} \sum_{m=0}^{j-i}(-1)^{m(n-j-1)}\varphi^{+}_{i+1,j-m}(q)\gs{j-i}{m}{q}^{-}q^{\binom{m}{2}-m(n-j-i-1)}\right)\\
		&=q^{2j(n-j)-\binom{j}{2}-1}\frac{\varphi^{+}_{1,i}(q)}{q+1}\\
		&\quad\left(\left(q^{j}-(-1)^{j}\right)\sum_{m=1}^{j-i}(-1)^{m(n-j)}\varphi^{+}_{i+1,j-m}(q)\gs{j-i}{m}{q}^{-}q^{\binom{m}{2}-m(n-j-i)}\right.\\
		&\left.\qquad+q^{j}\varphi^{+}_{i+1,j}(q)-(-1)^{j} \varphi^{+}_{i+1,j}(q)\vphantom{\sum_{m=1}^{j-i}\gs{j-i}{m}{q}^{-}}\right)\\
		&=q^{j-1}\left(q^{j}-(-1)^{j}\right)q^{2j(n-j)-\binom{j+1}{2}}\frac{\varphi^{+}_{1,i}(q)}{q+1}\\
		&\qquad\sum_{m=0}^{j-i}(-1)^{m(n-j)}\varphi^{+}_{i+1,j-m}(q)\gs{j-i}{m}{q}^{-}q^{\binom{m}{2}-m(n-j-i)}+\varphi^{+}_{i+1,j}(q)
	\end{align*}
	from which the formula for $\gamma_{i,j,n}$ follows, since the coefficient of $\gamma_{i,j,n}$ in \eqref{eq:hermitianinduction} is non-zero:
	\[
		\beta_{0,n-j,n}=q^{j-1}\frac{q^{j}-(-1)^{j}}{q+1}\neq0\:.\qedhere
	\]
\end{proof}

\begin{theorem}\label{th:gammageneralhermitian}
	For $0\leq i\leq \min\{j,n-j\}$, $j\leq n-1$ and $k\leq n-j$, we have that
	\begin{align*}
		\gamma_{i,j,n,k}&=
		\beta_{i,j,n,k+j}\gamma_{i,j,k+j}\\
		&=q^{(n-k-j)(k+i)+2jk-\binom{j+1}{2}}\gs{n-j-i}{n-k-j}{q}^{-}\varphi^{+}_{1,i}(q)\sum_{m=0}^{j-i}(-1)^{mk}\varphi^{+}_{i+1,j-m}(q)\gs{j-i}{m}{q}^{-}q^{\binom{m}{2}-m(k-i)}
	\end{align*}
\end{theorem}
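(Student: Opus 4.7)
The plan is a straightforward double-counting reduction to the already-established case where the two subspaces span the whole ambient space. Given a fixed $i$-singular $j$-space $\pi\subseteq\F_{q^2}^{n}$, I would count pairs $(\sigma,\tau)$ in two ways, where $\tau$ is a non-singular $(k+j)$-space with $\tau\supseteq\pi$, and $\sigma\subseteq\tau$ is a non-singular $k$-space satisfying $\sigma\cap\pi=\{0\}$ and $\langle\pi,\sigma\rangle=\tau$.

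On the one hand, for each non-singular $k$-space $\sigma$ counted by $\gamma_{i,j,n,k}$ (that is, satisfying $\sigma\cap\pi=\{0\}$ and with $\langle\pi,\sigma\rangle$ a non-singular $(k+j)$-space), the space $\tau:=\langle\pi,\sigma\rangle$ is uniquely determined and automatically contains $\pi$, so the number of pairs equals $\gamma_{i,j,n,k}$. On the other hand, the number of non-singular $(k+j)$-spaces $\tau$ in $\F_{q^2}^{n}$ containing $\pi$ equals $\beta_{i,j,n,k+j}$ by definition, and for each such $\tau$ we may restrict the hermitian form to $\tau$; since $\tau$ is non-singular of dimension $k+j=(k+j)$, containing the $i$-singular $j$-space $\pi$, the non-singular $k$-spaces $\sigma\subseteq\tau$ with $\sigma\cap\pi=\{0\}$ and $\langle\pi,\sigma\rangle=\tau$ are counted exactly by $\gamma_{i,j,k+j}$ (here the complementary-spanning instance, with $k=(k+j)-j$). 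This yields the identity
\[
    \gamma_{i,j,n,k}=\beta_{i,j,n,k+j}\,\gamma_{i,j,k+j}\:,
\]
which is the first displayed equality in the theorem.

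To obtain the explicit formula in the second line, I would substitute the values computed earlier. Lemma \ref{lem:betahermitian} with $k$ replaced by $k+j$ gives
\[
    \beta_{i,j,n,k+j}=q^{(n-k-j)(k+i)}\gs{n-j-i}{n-k-j}{q}^{-}\:,
\]
and Theorem \ref{th:gammahermitian} applied in ambient dimension $n'=k+j$ gives
\[
    \gamma_{i,j,k+j}=q^{2jk-\binom{j+1}{2}}\varphi^{+}_{1,i}(q)\sum_{m=0}^{j-i}(-1)^{mk}\varphi^{+}_{i+1,j-m}(q)\gs{j-i}{m}{q}^{-}q^{\binom{m}{2}-m(k-i)}\:,
\]
since $n'-j=k$ and $n'-j-i=k-i$. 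Multiplying these two expressions and combining the two $q$-powers into $q^{(n-k-j)(k+i)+2jk-\binom{j+1}{2}}$ produces exactly the claimed formula.

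There is essentially no obstacle here beyond verifying the validity of the restricted counting step: I need the fact that when $\tau$ is a non-singular $(k+j)$-space containing the $i$-singular $j$-space $\pi$, the restriction of the hermitian form to $\tau$ is non-degenerate and $\pi$ remains $i$-singular inside $\tau$ (both immediate since the radical of $\pi$ is an intrinsic invariant of $\pi$ and the form restricted to $\tau$ is non-degenerate by assumption). Once this is observed, $\gamma_{i,j,k+j}$ applies verbatim, and the remainder is a routine substitution—no algebraic simplification beyond collecting exponents of $q$ is required.
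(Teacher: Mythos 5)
Your proposal is correct and follows exactly the paper's own argument: the same double count of pairs $(\sigma,\tau)$ yielding $\gamma_{i,j,n,k}=\beta_{i,j,n,k+j}\gamma_{i,j,k+j}$, followed by substituting Lemma \ref{lem:betahermitian} (with $k$ replaced by $k+j$) and Theorem \ref{th:gammahermitian} (in ambient dimension $k+j$). Your substitutions and the resulting exponent $q^{(n-k-j)(k+i)+2jk-\binom{j+1}{2}}$ check out.
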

\begin{proof}
	Consider a fixed $i$-singular $j$-space. Now, we count in two ways the pairs $(\sigma,\tau)$, where $\sigma$ is non-singular $k$-space, disjoint from $\pi$, and $\tau$ is a non-singular $(k+j)$-space and $\pi,\sigma\subseteq\tau$. The equality $\gamma_{i,j,n,k}=\beta_{i,j,n,k+j}\gamma_{i,j,k+j}$ immediately follows. The second part of the result then follows from Lemma \ref{lem:betahermitian} and Theorem \ref{th:gammahermitian}.
\end{proof}

\subsection{The proportion of non-singular trivially intersecting subspaces spanning a non-singular space}

In this subsection, we look at the proportion that motivated this research.

\begin{definition}\label{def:rhohermitian}
    Given a non-degenerate hermitian form on $\F^{n}_{q^2}$ and integers $j,k$ with $0\leq j,k\leq n-1$ and $j+k\leq n$, let $\mathcal{S}_{j,k}$ be the set of pairs $(\pi,\pi')$ with $\dim(\pi)=j$ and $\dim(\pi')=k$ and both $\pi$ and $\pi'$ non-singular. Let $\mathcal{T}_{j,k}$ be the subset of $\mathcal{S}_{j,k}$ with pairs $(\pi,\pi')$ such that $\dim(\langle\pi,\pi'\rangle)=j+k$ and $\langle\pi,\pi'\rangle$ non-singular. The proportion $\frac{|\mathcal{T}_{j,k}|}{|\mathcal{S}_{j,k}|}$ is denoted by $\rho_{j,k,n}$.
\end{definition}

Note that by definition $\rho_{j,k,n}=\rho_{k,j,n}$.

\begin{theorem}\label{th:rhohermitian}
	For integers $j,k,n$ with $0\leq j,k\leq n-1$ and $j+k\leq n$, we have
    \[
        \rho_{j,k,n}=q^{jk-\binom{j+1}{2}}\frac{\varphi^{-}_{n-j-k+1,n-k}(q)}{\varphi^{-}_{n-j+1,n}(q)}\sum_{m=0}^{j}(-1)^{mk}\varphi^{+}_{1,j-m}(q)\gs{j}{m}{q}^{-}q^{\binom{m}{2}-mk}
    \]
\end{theorem}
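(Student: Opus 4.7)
The plan is to compute $\rho_{j,k,n}$ directly from its definition by expressing both $|\mathcal{S}_{j,k}|$ and $|\mathcal{T}_{j,k}|$ in terms of the quantities $\alpha$ and $\gamma$ that we have already tabulated. Since the unitary group acts transitively on non-singular $j$-spaces, fixing such a $\pi$ first and then choosing a non-singular $k$-space gives $|\mathcal{S}_{j,k}|=\alpha_{0,j,n}\alpha_{0,k,n}$. Similarly, $|\mathcal{T}_{j,k}|=\alpha_{0,j,n}\gamma_{0,j,n,k}$, since by definition $\gamma_{0,j,n,k}$ counts the non-singular $k$-spaces $\pi'$ trivially intersecting a fixed non-singular $j$-space $\pi$ and spanning a non-singular $(j+k)$-space. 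Therefore
\[
\rho_{j,k,n}=\frac{\gamma_{0,j,n,k}}{\alpha_{0,k,n}}\:.
\]

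Next, I plug in the closed forms. Setting $i=0$ in Theorem \ref{th:gammageneralhermitian}, and noting that $\varphi^{+}_{1,0}(q)=1$, gives
\[
\gamma_{0,j,n,k}=q^{k(n-k-j)+2jk-\binom{j+1}{2}}\gs{n-j}{n-k-j}{q}^{-}\sum_{m=0}^{j}(-1)^{mk}\varphi^{+}_{1,j-m}(q)\gs{j}{m}{q}^{-}q^{\binom{m}{2}-mk}\:,
\]
while Lemma \ref{lem:alphahermitian} yields $\alpha_{0,k,n}=q^{k(n-k)}\varphi^{-}_{k+1,n}(q)/\varphi^{-}_{1,n-k}(q)$. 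The power of $q$ in $\rho_{j,k,n}$ collapses to $jk-\binom{j+1}{2}$, so it only remains to check that the ratio of the $\varphi^{-}$ factors equals the one appearing in the statement.

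That simplification is purely formal and is where the only real work lies, but it is short: expand
\[
\gs{n-j}{n-k-j}{q}^{-}=\frac{\varphi^{-}_{k+1,n-j}(q)}{\varphi^{-}_{1,n-k-j}(q)}
\]
and split $\varphi^{-}_{1,n-k}(q)=\varphi^{-}_{1,n-k-j}(q)\,\varphi^{-}_{n-k-j+1,n-k}(q)$ and $\varphi^{-}_{k+1,n}(q)=\varphi^{-}_{k+1,n-j}(q)\,\varphi^{-}_{n-j+1,n}(q)$; after cancellation only $\varphi^{-}_{n-j-k+1,n-k}(q)/\varphi^{-}_{n-j+1,n}(q)$ remains, matching the target formula exactly.

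The main obstacle is essentially bookkeeping — keeping track of the ranges of indices in the $\varphi^{-}$ products so that the cancellations line up — but since all the heavy lifting was done in Theorem \ref{th:gammageneralhermitian}, no further double-counting or induction is required.
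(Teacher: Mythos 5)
Your proposal is correct and follows essentially the same route as the paper: both reduce to $\rho_{j,k,n}=\gamma_{0,j,n,k}/\alpha_{0,k,n}$ via transitivity of $\PGU(n,q^2)$ on non-singular $j$-spaces, then substitute Lemma \ref{lem:alphahermitian} and Theorem \ref{th:gammageneralhermitian} and cancel the $\varphi^{-}$ products. Your index bookkeeping checks out (the paper writes the binomial as $\gs{n-j}{k}{q}^{-}$ rather than $\gs{n-j}{n-k-j}{q}^{-}$, but these agree by the symmetry of $\gs{b}{a}{q}^{-}$), so there is nothing to add.
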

\begin{proof}
    Since the unitary group $\PGU(n,q^{2})$ acts transitively on the non-singular $j$-spaces we have immediately that $\rho_{j,k,n}=\frac{\gamma_{0,j,n,k}}{\alpha_{0,k,n}}$. From Lemma \ref{lem:alphahermitian} and Theorem \ref{th:gammageneralhermitian} we get
    \begin{align*}
        \rho_{j,k,n}&=\frac{q^{(n-k+j)k-\binom{j+1}{2}}\gs{n-j}{k}{q}^{-}\sum_{m=0}^{j}(-1)^{mk}\varphi^{+}_{1,j-m}(q)\gs{j}{m}{q}^{-}q^{\binom{m}{2}-mk}}{q^{k(n-k)}\frac{\varphi^{-}_{k+1,n}(q)}{\varphi^{-}_{1,n-k}(q)}}\\
        &=q^{jk-\binom{j+1}{2}}\frac{\varphi^{-}_{n-j-k+1,n-j}(q)}{\varphi^{-}_{n-k+1,n}(q)}\sum_{m=0}^{j}(-1)^{mk}\varphi^{+}_{1,j-m}(q)\gs{j}{m}{q}^{-}q^{\binom{m}{2}-mk}\\
        &=q^{jk-\binom{j+1}{2}}\frac{\varphi^{-}_{n-j-k+1,n-k}(q)}{\varphi^{-}_{n-j+1,n}(q)}\sum_{m=0}^{j}(-1)^{mk}\varphi^{+}_{1,j-m}(q)\gs{j}{m}{q}^{-}q^{\binom{m}{2}-mk}\;.\qedhere
    \end{align*}
\end{proof}

\begin{remark}
    In \cite{gnp} it was proven that $\rho_{j,k,j+k}\geq 1-\frac{9}{5}q^{-2}$, which was improved to $\rho_{j,k,j+k}\geq 1-\frac{3}{2}q^{-2}$ in \cite{gim}  (unless $(j,k,q)=(1,1,2)$). In \cite{gnp2} it was proven that $\rho_{j,k,n}\geq 1-\frac{43}{25}q^{-1}$ if $n\geq j+k+1$.  The previous theorem improves these results by giving the exact value of the proportion.
\end{remark}

Although we have an exact expression for the ratio of non-singular pairs that are disjoint, we will present a lower bound for it, as to get a better feeling for its asymptotic behavior.
\par It is immediate (both from Definition \ref{def:rhohermitian} and Theorem \ref{th:rhohermitian}) that $\rho_{0,k,n}=1$, and by symmetry $\rho_{j,0,n}=1$. We now look at the other cases. We distinguish between $n=j+k$ and $n\geq j+k+1$, but first prove a lemma.

\begin{lemma}\label{lem:boundingthesummands}
    For integers $j$, $k$ and $m$ with $0\leq m\leq j-1$ and $2\leq j\leq k$ we have
    \begin{align*}
        \varphi^{+}_{1,j-m}(q)\gs{j}{m}{q}^{-}q^{\binom{m}{2}-mk}\geq\varphi^{+}_{1,j-m-1}(q)\gs{j}{m+1}{q}^{-}q^{\binom{m+1}{2}-(m+1)k}\:.
    \end{align*}
\end{lemma}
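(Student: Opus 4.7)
The plan is to divide the right-hand side by the left-hand side and show the resulting ratio is at most $1$. Using Definition \ref{def:functionsphipsichi} we have the telescoping identities
\[
\frac{\varphi^{+}_{1,j-m}(q)}{\varphi^{+}_{1,j-m-1}(q)}=q^{j-m}+(-1)^{j-m}, \qquad \frac{\gs{j}{m+1}{q}^{-}}{\gs{j}{m}{q}^{-}}=\frac{q^{j-m}-(-1)^{j-m}}{q^{m+1}-(-1)^{m+1}}\:,
\]
and the power of $q$ contributes a factor of $q^{m-k}$ since $\binom{m+1}{2}-\binom{m}{2}=m$. Hence the lemma is equivalent to showing
\[
\frac{\bigl(q^{j-m}-(-1)^{j-m}\bigr)\,q^{m-k}}{\bigl(q^{j-m}+(-1)^{j-m}\bigr)\bigl(q^{m+1}-(-1)^{m+1}\bigr)}\leq 1\:.
\]

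To bound this ratio I would use crude estimates on each factor. From $q\geq 2$ together with $j-m\geq 1$ and $m+1\geq 1$ one gets $q^{j-m}-(-1)^{j-m}\leq 2q^{j-m}$, $q^{j-m}+(-1)^{j-m}\geq \tfrac12 q^{j-m}$, and $q^{m+1}-(-1)^{m+1}\geq \tfrac12 q^{m+1}$; the latter two just use that $q^a\geq 2$ whenever $a\geq 1$ and $q\geq 2$. The hypothesis $k\geq j$ gives $q^{m-k}\leq q^{m-j}$. Combining everything, the ratio is at most
\[
\frac{2q^{j-m}\cdot q^{m-j}}{\tfrac12 q^{j-m}\cdot \tfrac12 q^{m+1}}=\frac{8}{q^{j+1}}\:,
\]
which is at most $1$ since $q^{j+1}\geq 2^{3}=8$ by the hypotheses $q\geq 2$ and $j\geq 2$.

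There is essentially no obstacle here: once the ratio is written out, the proof amounts to replacing $q^{a}\pm 1$ by $q^{a}$ times a constant in $[\tfrac12,2]$ and collecting powers of $q$. The slack $q^{j+1}/8$ in the final bound is comfortable, so the proof is straightforward calculation; the only thing to check is that the factor $q^{j+1}\geq 8$ estimate is tight exactly at $q=j=2$, where the lemma still holds (the true ratio is far below $1$ even there, as the crude bounds can be sharpened if needed).
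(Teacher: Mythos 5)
Your proposal is correct and follows essentially the same route as the paper: both reduce the claim to the single equivalent inequality $\bigl(q^{j-m}+(-1)^{j-m}\bigr)\bigl(q^{m+1}-(-1)^{m+1}\bigr)\geq q^{m-k}\bigl(q^{j-m}-(-1)^{j-m}\bigr)$ via the same telescoping ratios. The only difference is the final elementary estimate --- you replace each factor $q^{a}\pm1$ by a constant multiple of $q^{a}$ and use $q^{j+1}\geq 8$, whereas the paper uses a chain of additive bounds ending in $q^{3}-q^{2}-q+1\geq q^{j-k}+q^{m-k}$; both are valid.
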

\begin{proof}
    It is immediate that
    \begin{align*}
        &\varphi^{+}_{1,j-m}(q)\gs{j}{m}{q}^{-}q^{\binom{m}{2}-mk}\geq\varphi^{+}_{1,j-m-1}(q)\gs{j}{m+1}{q}^{-}q^{\binom{m+1}{2}-(m+1)k}\\
        \Leftrightarrow\qquad&\left(q^{j-m}+(-1)^{j-m}\right)\left(q^{m+1}-(-1)^{m+1}\right)\geq q^{m-k}(q^{j-m}-(-1)^{j-m})\:,
    \end{align*}
    which is true since
    \begin{align*}
        \left(q^{j-m}+(-1)^{j-m}\right)\left(q^{m+1}-(-1)^{m+1}\right)&\geq q^{j+1}-q^{m+1}-q^{j-m}+1\\
        &\geq q^{3}-q^{2}-q+1\\
        &\geq 1+q^{-1}\\
        &\geq q^{j-k}+q^{m-k}\\
        &\geq q^{m-k}\left(q^{j-m}+1\right)\\
        &\geq q^{m-k}(q^{j-m}-(-1)^{j-m})\:.\qedhere
    \end{align*}
\end{proof}

\begin{theorem}\label{th:hermitianrhobound1}
    Let $j$ and $k$ be integers with $j,k\geq 1$. We have
    \begin{align*}
        \rho_{1,k,k+1}&=1-\frac{1}{q^2}+(-1)^{k}\frac{q+1}{q^{2}\left(q^{k+1}+(-1)^{k}\right)}\:,\\
        \rho_{j,1,j+1}&=1-\frac{1}{q^2}+(-1)^{j}\frac{q+1}{q^{2}\left(q^{j+1}+(-1)^{j}\right)}
    \end{align*}
    and
    \[
        \rho_{j,k,j+k}\geq 1-\frac{1}{q^{2}}-\frac{1}{q^4}-\frac{1}{q^{k+j+1}}+\max\left\{\frac{1}{q^{2j+2}},\frac{1}{q^{2k+2}}\right\}
    \]
    if $j,k\geq2$.
\end{theorem}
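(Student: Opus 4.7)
\emph{Overall approach.} I would start from the identity of Theorem~\ref{th:rhohermitian} specialised to $n=j+k$, namely
\[
\rho_{j,k,j+k}=q^{jk-\binom{j+1}{2}}\cdot\frac{\varphi^{-}_{1,j}(q)}{\varphi^{-}_{k+1,j+k}(q)}\cdot S_{j,k},\qquad S_{j,k}:=\sum_{m=0}^{j}(-1)^{mk}\varphi^{+}_{1,j-m}(q)\gs{j}{m}{q}^{-}q^{\binom{m}{2}-mk}.
\]
All three assertions will be derived from this identity: the two closed forms by substitution and simplification, and the inequality by applying the bounds on $\varphi^{\pm}$ from Lemmas~\ref{lem:phiminupperbound}--\ref{lem:phispluslowerbound} together with Lemma~\ref{lem:boundingthesummands}.

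\emph{Exact formulas.} Setting $j=1$ makes $S_{1,k}$ collapse to two terms, $S_{1,k}=(q-1)+(-1)^{k}q^{-k}$, while the ratio simplifies to $\varphi^{-}_{1,1}(q)/\varphi^{-}_{k+1,k+1}(q)=(q+1)/(q^{k+1}+(-1)^{k})$. Multiplying gives
\[
\rho_{1,k,k+1}=\frac{(q+1)\bigl(q^{k+1}-q^{k}+(-1)^{k}\bigr)}{q\bigl(q^{k+1}+(-1)^{k}\bigr)},
\]
which one checks equals $1-q^{-2}+(-1)^{k}(q+1)/\bigl(q^{2}(q^{k+1}+(-1)^{k})\bigr)$ by clearing the common denominator $q^{2}(q^{k+1}+(-1)^{k})$ on both sides. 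The second identity $\rho_{j,1,j+1}$ follows from the symmetry $\rho_{j,k,n}=\rho_{k,j,n}$.

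\emph{Lower bound when $j,k\geq 2$.} I would first invoke the symmetry $\rho_{j,k,n}=\rho_{k,j,n}$ to assume without loss of generality that $j\leq k$; then $\max\{q^{-2j-2},q^{-2k-2}\}=q^{-2j-2}$ and Lemma~\ref{lem:boundingthesummands} becomes applicable. I would then bound each of the three factors separately:
\begin{itemize}
\item $\varphi^{-}_{1,j}(q)$ from below by Lemma~\ref{lem:phiminlowerbound} with $(b,a)=(0,j)$;
\item $\varphi^{-}_{k+1,j+k}(q)$ from above by Lemma~\ref{lem:phiminupperbound} with $(b,a)=(k,j)$;
\item $S_{j,k}$ from below via Lemma~\ref{lem:boundingthesummands}: the moduli $\varphi^{+}_{1,j-m}(q)\gs{j}{m}{q}^{-}q^{\binom{m}{2}-mk}$ are monotonically decreasing in $m$, so when $k$ is even every summand is positive and $S_{j,k}\geq\varphi^{+}_{1,j}(q)$, while when $k$ is odd the alternating-series estimate gives the analogous bound after retaining the $m=0$ and $m=1$ terms explicitly. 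The leading $\varphi^{+}_{1,j}(q)$ is then bounded from below by Lemma~\ref{lem:phispluslowerbound}.
\end{itemize}

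\emph{Main obstacle.} The difficult step is the concluding simplification: once the three bounds have been multiplied together, the result must be shown to majorise $1-q^{-2}-q^{-4}-q^{-j-k-1}+q^{-2j-2}$. This reduces to a polynomial inequality in $q^{-1}$ whose form depends on the parities of $j$ and $k$, and the correction terms on the right-hand side have transparent origins---the $q^{-j-k-1}$ comes from the $m=1$ contribution in $S_{j,k}$ interacting with the leading behaviour, and the improvement $q^{-2j-2}$ comes from the $(-1)^{k}q^{-k-j}(q^{j}-(-1)^{j})/(q+1)$ factor in the upper bound on $\varphi^{-}_{k+1,j+k}(q)$---but the bookkeeping of sub-dominant terms and signs is lengthy, albeit mechanical.
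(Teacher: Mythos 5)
Your derivation of the two exact formulas is correct and is essentially what the paper does: specialise Theorem \ref{th:rhohermitian} to $n=j+k$, note that $S_{1,k}=(q-1)+(-1)^{k}q^{-k}$ and $\varphi^{-}_{1,1}(q)/\varphi^{-}_{k+1,k+1}(q)=(q+1)/(q^{k+1}+(-1)^{k})$, and use the symmetry $\rho_{j,k,n}=\rho_{k,j,n}$. The truncation of $S_{j,k}$ after the $m=1$ term via Lemma \ref{lem:boundingthesummands} is also exactly the paper's step and is valid for both parities of $k$.

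The lower bound for $j,k\geq 2$, however, has a genuine gap. You propose to bound $\varphi^{-}_{1,j}(q)$ from below by Lemma \ref{lem:phiminlowerbound} and $\varphi^{+}_{1,j}(q)$ from below by Lemma \ref{lem:phispluslowerbound}, separately. The product of those two bounds is
\[
q^{2\binom{j+1}{2}}\left(1+\tfrac{1}{q}-\tfrac{1}{q^2}-\tfrac{2}{q^{3}}\right)\left(1-\tfrac{1}{q}+\tfrac{1}{q^2}-\tfrac{2}{q^{3}}\right)=q^{2\binom{j+1}{2}}\left(1-\tfrac{1}{q^{2}}-\tfrac{2}{q^{3}}-\tfrac{1}{q^{4}}+\tfrac{4}{q^{6}}\right),
\]
which carries a spurious $-2q^{-3}$ that the target bound $1-q^{-2}-q^{-4}-q^{-k-j-1}+q^{-2j-2}$ (whose first correction after $-q^{-2}$ is $-q^{-4}$, since $k+j+1\geq 5$) cannot absorb. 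Concretely, for $q=2$, $j=k=2$ your three separate bounds multiply out to roughly $0.52$, whereas the theorem claims $\rho_{2,2,4}\geq 0.6718\ldots$ (and the true value is $\approx 0.7167$), so the argument as proposed does not prove the statement. The paper avoids this loss by never separating the two $\varphi$-factors: it uses the exact telescoping identity $\varphi^{-}_{1,j}(q)\,\varphi^{+}_{1,j-1}(q)=\psi^{-}_{1,j-1}(q^{2})\,(q^{j}-(-1)^{j})$ and then applies Lemma \ref{lem:psiminbounds} in the base $q^{2}$, so that all error terms occur in even powers of $q^{-1}$; the remaining factor $(q^{j}-(-1)^{j})(q^{j}+(-1)^{j}+\cdots)$ is handled exactly and the $+q^{-2j-2}$ gain arises from the cross term $(-q^{-2})\cdot(-q^{-2j})$ (not, as you suggest, from the upper bound on $\varphi^{-}_{k+1,j+k}(q)$). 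To repair your proof you would need to replace the two separate lower bounds by this paired estimate, or by some other bound on the product $\varphi^{-}_{1,j}(q)\varphi^{+}_{1,j}(q)=\psi^{-}_{1,j}(q^{2})$ that is accurate to within $O(q^{-4})$.
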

\begin{proof}
    From Theorem \ref{th:rhohermitian} we have immediately
    \begin{align*}
        \rho_{j,k,j+k}
        &=q^{jk-\binom{j+1}{2}}\frac{\varphi^{-}_{1,j}(q)}{\varphi^{-}_{k+1,j+k}(q)}\sum_{m=0}^{j}(-1)^{mk}\varphi^{+}_{1,j-m}(q)\gs{j}{m}{q}^{-}q^{\binom{m}{2}-mk}\:.
    \end{align*}
    The formula for the case $j=1$ follows immediately, and by symmetry also the formula for the case $k=1$. We now look at the the case $j,k\geq2$. Because of the symmetry we may assume without loss of generality that $j\leq k$. From Lemma \ref{lem:boundingthesummands} it follows that
    \begin{align*}
        \varphi^{+}_{1,j-m}(q)\gs{j}{m}{q}^{-}q^{\binom{m}{2}-mk}\geq\varphi^{+}_{1,j-m-1}(q)\gs{j}{m+1}{q}^{-}q^{\binom{m+1}{2}-(m+1)k}\:.
    \end{align*}
    \comments{We note that for $0\leq m\leq j-1$
    \begin{align*}
        &\varphi^{+}_{1,j-m}(q)\gs{j}{m}{q}^{-}q^{\binom{m}{2}-mk}\geq\varphi^{+}_{1,j-m-1}(q)\gs{j}{m+1}{q}^{-}q^{\binom{m+1}{2}-(m+1)k}\\
        \Leftrightarrow\qquad&\left(q^{j-m}+(-1)^{j-m}\right)\left(q^{m+1}-(-1)^{m+1}\right)\geq q^{m-k}(q^{j-m}-(-1)^{j-m})\:,
    \end{align*}
    which is true since
    \begin{align*}
        \left(q^{j-m}+(-1)^{j-m}\right)\left(q^{m+1}-(-1)^{m+1}\right)&\geq q^{j+1}-q^{m+1}-q^{j-m}+1\\
        &\geq q^{j+1}-q^{j}\\
        &\geq q^{j-1}+q^{j-2}\\
        &\geq q^{-k}\left(q^{j}+q^{m}\right)\\
        &\geq q^{m-k}(q^{j-m}-(-1)^{j-m})\:.
    \end{align*}}
    for $0\leq m\leq j-1$. From this it follows that
    \begin{align*}
        \rho_{j,k,j+k}&\geq q^{jk-\binom{j+1}{2}}\frac{\varphi^{-}_{1,j}(q)}{\varphi^{-}_{k+1,k+j}(q)}\left(\varphi^{+}_{1,j}(q)+(-1)^{k}\varphi^{+}_{1,j-1}(q)\frac{q^{j}-(-1)^{j}}{q+1}q^{-k}\right)\\
        &=q^{jk-\binom{j+1}{2}}\frac{\psi^{-}_{1,j-1}(q^2)}{\varphi^{-}_{k+1,k+j}(q)}\left(q^{j}-(-1)^{j}\right)\left(q^{j}+(-1)^{j}+(-1)^{k}\frac{q^{j}-(-1)^{j}}{q+1}q^{-k}\right)\\
        &=q^{jk-\binom{j+1}{2}+2j}\frac{\psi^{-}_{1,j-1}(q^2)}{\varphi^{-}_{k+1,k+j}(q)}\left(1-(-1)^{j}q^{-j}\right)\left(1+(-1)^{j}q^{-j}+(-1)^{k}\frac{q^{j}-(-1)^{j}}{q+1}q^{-k-j}\right)\:.
    \end{align*}
    Using Lemmas \ref{lem:psiminbounds} and \ref{lem:phiminupperbound} we find
    \begin{align*}
        \rho_{j,k,j+k}&\geq q^{jk-\binom{j}{2}+j}\frac{q^{2\binom{j}{2}}}{q^{jk+\binom{j+1}{2}}}\left(1-\frac{1}{q^{2}}-\frac{1}{q^{4}}+\frac{1}{q^{2j}}\right)\left(1-(-1)^{j}q^{-j}\right)\\
        &\qquad\qquad \frac{1+(-1)^{k}\frac{q^{j}-(-1)^{j}}{q+1}q^{-k-j}+(-1)^{j}q^{-j}}{1+(-1)^{k}\frac{q^{j}-(-1)^{j}}{q+1}q^{-k-j}-q^{-2k-j-1}}\\
        &\geq \left(1-\frac{1}{q^{2}}-\frac{1}{q^{4}}+\frac{1}{q^{2j}}\right)\left(1-(-1)^{j}q^{-j}\right)\left(1+\frac{(-1)^{j}q^{-j}}{1+(-1)^{k}\frac{q^{j}-(-1)^{j}}{q+1}q^{-k-j}-q^{-2k-j-1}}\right)\:.
    \end{align*}

    For $j$ even we then find
    \begin{align*}
        \rho_{j,k,j+k}&\geq\left(1-\frac{1}{q^{2}}-\frac{1}{q^{4}}+\frac{1}{q^{2j}}\right)\left(1-q^{-j}\right)\left(1+\frac{q^{-j}}{1+(-1)^{k}\frac{q^{j}-1}{q+1}q^{-k-j}-q^{-2k-j-1}}\right)\\
        &\geq\left(1-\frac{1}{q^{2}}-\frac{1}{q^{4}}+\frac{1}{q^{2j}}\right)\left(1-\frac{1}{q^{j}}\right)\left(1+\frac{q^{-j}}{1+\frac{q^{j}-1}{q+1}q^{-k-j}}\right)\\
        &\geq\left(1-\frac{1}{q^{2}}-\frac{1}{q^{4}}+\frac{1}{q^{2j}}\right)\left(1-\frac{1}{q^{j}}\right)\left(1+\frac{q^{-j}}{1+q^{-k-1}}\right)\\
        &\geq\left(1-\frac{1}{q^{2}}-\frac{1}{q^{4}}+\frac{1}{q^{2j}}\right)\left(1-\frac{1}{q^{j}}\right)\left(1+q^{-j}\left(1-q^{-k-1}\right)\right)\\
        &=1-\frac{1}{q^{2}}-\frac{1}{q^{4}}+\frac{1}{q^{2j+2}}+\frac{1}{q^{2j+4}}-\frac{1}{q^{4j}}-\frac{1}{q^{k+j+1}}+\frac{1}{q^{k+j+3}}+\frac{1}{q^{k+j+5}}-\frac{1}{q^{k+3j+1}}\\&\qquad\qquad+\frac{1}{q^{k+2j+1}}-\frac{1}{q^{k+2j+3}}-\frac{1}{q^{k+2j+5}}+\frac{1}{q^{k+4j+1}}\\
        &\geq 1-\frac{1}{q^{2}}-\frac{1}{q^{4}}+\frac{1}{q^{2j+2}}-\frac{1}{q^{k+j+1}}\:.
    \end{align*}
    For $j$ odd (and thus at least 3) we find
    \begin{align*}
        \rho_{j,k,j+k}&\geq\left(1-\frac{1}{q^{2}}-\frac{1}{q^{4}}+\frac{1}{q^{2j}}\right)\left(1+q^{-j}\right)\left(1-\frac{q^{-j}}{1+(-1)^{k}\frac{q^{j}+1}{q+1}q^{-k-j}-q^{-2k-j-1}}\right)\\
        &\geq\left(1-\frac{1}{q^{2}}-\frac{1}{q^{4}}+\frac{1}{q^{2j}}\right)\left(1+q^{-j}\right)\left(1-\frac{q^{-j}}{1-\left(\frac{q^{j}+1}{q+1}+q^{-k-1}\right)q^{-k-j}}\right)\\
        &\geq\left(1-\frac{1}{q^{2}}-\frac{1}{q^{4}}+\frac{1}{q^{2j}}\right)\left(1+q^{-j}\right)\left(1-\frac{q^{-j}}{1-q^{-k-1}}\right)\\
        &\geq\left(1-\frac{1}{q^{2}}-\frac{1}{q^{4}}+\frac{1}{q^{2j}}\right)\left(1+q^{-j}\right)\left(1-q^{-j}\left(1+q^{-k-1}+q^{-2k-1}\right)\right)\\
        &=1-\frac{1}{q^{2}}-\frac{1}{q^{4}}+\frac{1}{q^{2j+2}}+\frac{1}{q^{2j+4}}-\frac{1}{q^{4j}}-\frac{1}{q^{k+j+1}}+\frac{1}{q^{k+j+3}}+\frac{1}{q^{k+j+5}}-\frac{1}{q^{k+3j+1}}\\&\qquad-\frac{1}{q^{k+2j+1}}+\frac{1}{q^{k+2j+3}}+\frac{1}{q^{k+2j+5}}-\frac{1}{q^{k+4j+1}}-\frac{1}{q^{2k+j+1}}+\frac{1}{q^{2k+j+3}}+\frac{1}{q^{2k+j+5}}\\&\qquad-\frac{1}{q^{2k+3j+1}}-\frac{1}{q^{2k+2j+1}}+\frac{1}{q^{2k+2j+3}}+\frac{1}{q^{2k+2j+5}}-\frac{1}{q^{2k+4j+1}}\\
        &\geq1-\frac{1}{q^{2}}-\frac{1}{q^{4}}+\frac{1}{q^{2j+2}}-\frac{1}{q^{k+j+1}}\:.
    \end{align*}
    So, for both cases we find the same lower bound.
\end{proof}

\begin{remark}
    From the first equality we gave in Theorem \ref{th:hermitianrhobound1} we can derive immediately that $\rho_{1,1,2}=1-q^{-2}\left(\frac{q}{q-1}\right)$ and thus $\rho_{1,1,2}\geq1-\frac{3}{2}q^{-2}$ for all $q\geq3$, with equality for $q=3$. This shows that the bound in \cite{gim} was optimal amongst the bounds of the form $1-aq^{-2}$. For $k\geq2$ we can derive from the first equality that $\rho_{1,k,k+1}\geq1-q^{-2}\left(\frac{q^3-q^2+q}{(q-1)(q^{2}+1)}\right)$, hence $\rho_{1,k,k+1}\geq1-\frac{6}{5}q^{-2}$ for all $q\geq2$.
    \par Similarly, we can derive from the bound in Theorem \ref{th:hermitianrhobound1} that $\rho_{j,k,j+k}=1-q^{-2}-q^{-4}-q^{-5}+q^{-6}$ for $j,k\geq2$, hence $\rho_{j,k,k+j}\geq1-\frac{21}{16}q^{-2}$ for all $q\geq2$ and $j,k\geq2$.
\end{remark}

\begin{remark}
    The coefficients of $1$, $q^{-2}$ and $q^{-4}$ from the $j,k\geq2$ case in Theorem \ref{th:hermitianrhobound1} cannot be improved since some more detailed analysis shows that
    \begin{align*}
        \rho_{j,k,j+k}=
        \begin{cases}
            1-q^{-2}-q^{-4}+O\left(q^{-2j-1}\right) &\quad 2\leq j=k,\\
            1-q^{-2}-q^{-4}+O\left(q^{-2j-2}\right) &\quad 2\leq j<k,\\
            1-q^{-2}-q^{-4}+O\left(q^{-2k-2}\right) &\quad 2\leq k<j.
        \end{cases}
    \end{align*}
\end{remark}

\begin{theorem}\label{th:hermitianrhobound2}
    Let $j$, $k$ and $n$ be integers with $j,k\geq 1$ and $n\geq j+k+1$. We have that
    \begin{align*}
        \rho_{1,k,n}
        &=1-\frac{1}{q}+(-1)^{k}\frac{q^{n-k-1}-(-1)^{n}q^{k-1}(q-1)+(-1)^{n-k}q^{-1}(q-2)}{q^{n}-(-1)^{n}}\:,\\
        \rho_{j,1,n}&=1-\frac{1}{q}+(-1)^{j}\frac{q^{n-j-1}-(-1)^{n}q^{j-1}(q-1)+(-1)^{n-j}q^{-1}(q-2)}{q^{n}-(-1)^{n}}
    \end{align*}
    and
    \begin{align*}
        \rho_{j,k,j+k+1}&\geq 1-q^{-1}-q^{-3}-3q^{-4}\\
        \rho_{j,k,n}&\geq 1-q^{-1}+q^{-2}-4q^{-3}&n\geq j+k+2
    \end{align*}
    if $j,k\geq2$.
\end{theorem}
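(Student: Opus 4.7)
The strategy closely parallels the proof of Theorem~\ref{th:hermitianrhobound1}. For the equalities with $j=1$ (and $k=1$ by the symmetry $\rho_{j,k,n}=\rho_{k,j,n}$), we apply Theorem~\ref{th:rhohermitian} directly. With $j=1$ the sum reduces to the two terms $\varphi^{+}_{1,1}(q)=q-1$ and $(-1)^{k}\gs{1}{1}{q}^{-}q^{-k}=(-1)^{k}q^{-k}$, giving
\[
\rho_{1,k,n}=q^{k-1}\,\frac{q^{n-k}-(-1)^{n-k}}{q^{n}-(-1)^{n}}\left((q-1)+(-1)^{k}q^{-k}\right).
\]
Expanding the numerator and grouping the $(-1)^{n}$, $(-1)^{n-k}$ and $(-1)^{k}$ contributions recovers the stated closed-form expression after a straightforward rearrangement.

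For the lower bounds with $j,k\geq 2$, we may assume without loss of generality that $j\leq k$. Exactly as in Theorem~\ref{th:hermitianrhobound1}, Lemma~\ref{lem:boundingthesummands} implies that the summands $\varphi^{+}_{1,j-m}(q)\gs{j}{m}{q}^{-}q^{\binom{m}{2}-mk}$ are non-increasing in $m$, so an alternating-series comparison yields
\[
\sum_{m=0}^{j}(-1)^{mk}\varphi^{+}_{1,j-m}(q)\gs{j}{m}{q}^{-}q^{\binom{m}{2}-mk}\geq \varphi^{+}_{1,j}(q)+(-1)^{k}\tfrac{q^{j}-(-1)^{j}}{q+1}\varphi^{+}_{1,j-1}(q)q^{-k},
\]
valid for both parities of $k$. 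Substituting into Theorem~\ref{th:rhohermitian} gives
\[
\rho_{j,k,n}\geq q^{jk-\binom{j+1}{2}}\,\frac{\varphi^{-}_{n-j-k+1,n-k}(q)}{\varphi^{-}_{n-j+1,n}(q)}\left(\varphi^{+}_{1,j}(q)+(-1)^{k}\tfrac{q^{j}-(-1)^{j}}{q+1}\varphi^{+}_{1,j-1}(q)q^{-k}\right).
\]
We then upper bound $\varphi^{-}_{n-j+1,n}(q)$ via Lemma~\ref{lem:phiminupperbound}, lower bound $\varphi^{-}_{n-j-k+1,n-k}(q)$ via Lemma~\ref{lem:phiminlowerbound}, and lower bound $\varphi^{+}_{1,j}(q)$ and $\varphi^{+}_{1,j-1}(q)$ via Lemma~\ref{lem:phispluslowerbound}. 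The two regimes $n=j+k+1$ and $n\geq j+k+2$ are handled separately, because when $n-j-k=1$ the factor of lowest index in $\varphi^{-}_{n-j-k+1,n-k}(q)$ is $q^{2}-1$ and the ratio of $\varphi^{-}$ factors cannot be estimated with the same slack as when $n-j-k\geq 2$; this is the sole reason the two bounds differ.

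The main obstacle, as in Theorem~\ref{th:hermitianrhobound1}, is careful bookkeeping: one must track signs across the parities of $j$, $k$ and the implicit parity indices of the $\varphi^{-}$ bounds, then expand the resulting rational function in powers of $q^{-1}$ and discard terms beyond the claimed order. Unlike the $n=j+k$ setting, the leading sub-leading corrections of $\varphi^{+}_{1,j}(q)$ and of the $\varphi^{-}$ ratio no longer cancel at order $q^{-1}$, which is precisely why the dominant term in the deficit $1-\rho_{j,k,n}$ is now $q^{-1}$ rather than $q^{-2}$, and why a positive $q^{-2}$ term appears once $n\geq j+k+2$ allows the lower bound on $\varphi^{-}_{n-j-k+1,n-k}(q)$ from Lemma~\ref{lem:phiminlowerbound} to contribute its full $(-1)^{b}q^{-b-1}$ sign-alternating improvement.
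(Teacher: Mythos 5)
Your treatment of the $j=1$ case (and $k=1$ by symmetry) is correct and is exactly what the paper does: substitute $j=1$ into Theorem~\ref{th:rhohermitian}, reduce the sum to $(q-1)+(-1)^{k}q^{-k}$, and rearrange. Your opening moves for $j,k\geq2$ also match the paper: WLOG $j\leq k$, the alternating-series truncation via Lemma~\ref{lem:boundingthesummands} to keep only the $m=0$ and $m=1$ terms, and the plan to estimate the $\varphi^{-}$ ratio with Lemmas~\ref{lem:phiminupperbound} and~\ref{lem:phiminlowerbound} and the $\varphi^{+}$ factors with Lemma~\ref{lem:phispluslowerbound}.

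However, for $j,k\geq2$ the proposal stops precisely where the work starts, and what you dismiss as ``careful bookkeeping'' hides steps that do not follow automatically. After substituting the three lemmas, the paper is left with a product whose first factor still contains the sign-alternating term $(-1)^{k}\gs{j}{1}{q}^{-}q^{-j-k}$ (of order $q^{-k-1}$, not negligible against the target error $q^{-3}$ or $q^{-4}$), and whose denominator bound must be inverted; the paper handles these by (a) proving $\bigl(1+(-1)^{n-j}q^{-n}\gs{j}{1}{q}^{-}-q^{-2n+j-1}\bigr)^{-1}\geq1-(-1)^{n-j}q^{-n}\gs{j}{1}{q}^{-}$ and (b) showing that the $k$-dependent term is minimised over $k\geq2$ at the odd value $k=3$, which is what makes the eventual lower bound uniform in $k$. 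Neither step appears in your outline, and without something playing their role the final ``expand and discard'' cannot be carried out, since the expression still depends on $j$, $k$ and $n$ through terms of competing signs. Your closing explanation is also off on one point: the $+q^{-2}$ in the bound for $n\geq j+k+2$ does not come from the $(-1)^{b}q^{-b-1}$ term of Lemma~\ref{lem:phiminlowerbound} (that term is $O(q^{-3})$ when $b=n-j-k\geq2$); it comes from the factor $1-q^{-1}+q^{-2}-2q^{-3}$ supplied by Lemma~\ref{lem:phispluslowerbound}, and survives for $n\geq j+k+2$ only because the $-q^{-2}$ contribution that Lemma~\ref{lem:phiminlowerbound} forces when $b=1$ is then absent. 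So the route is the right one, but the proof as written has a genuine gap in the $j,k\geq2$ case.
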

\begin{proof}
    The formula for the case $j=1$ follows immediately from Theorem \ref{th:rhohermitian}, and by symmetry also the formula for the case $k=1$. We now look at the the case $j,k\geq2$. Because of the symmetry we may assume without loss of generality that $j\leq k$. From Lemma \ref{lem:boundingthesummands} it follows that
    \begin{align*}
        \varphi^{+}_{1,j-m}(q)\gs{j}{m}{q}^{-}q^{\binom{m}{2}-mk}\geq\varphi^{+}_{1,j-m-1}(q)\gs{j}{m+1}{q}^{-}q^{\binom{m+1}{2}-(m+1)k}\:.
    \end{align*}
    for $0\leq m\leq j-1$. From this it follows that
    \begin{align*}
        \rho_{j,k,n}&\geq q^{jk-\binom{j+1}{2}}\frac{\varphi^{-}_{n-j-k+1,n-k}(q)}{\varphi^{-}_{n-j+1,n}(q)}\left(\varphi^{+}_{1,j}(q)+(-1)^{k}\varphi^{+}_{1,j-1}(q)\gs{j}{1}{q}^{-}q^{-k}\right)\\
        &=q^{jk-\binom{j+1}{2}}\frac{\varphi^{-}_{n-j-k+1,n-k}(q)\varphi^{+}_{1,j-1}(q)}{\varphi^{-}_{n-j+1,n}(q)}\left(q^{j}+(-1)^{j}+(-1)^{k}\gs{j}{1}{q}^{-}q^{-k}\right)\:.
    \end{align*}
    Now using Lemmas \ref{lem:phiminupperbound}, \ref{lem:phiminlowerbound} and \ref{lem:phispluslowerbound} we find 
    \begin{align*}
        \rho_{j,k,n}&\geq q^{jk-\binom{j+1}{2}+j(n-j-k)+\binom{j+1}{2}+\binom{j}{2}+j-(n-j)j-\binom{j+1}{2}}\frac{1+(-1)^{j}q^{-j}+(-1)^{k}\gs{j}{1}{q}^{-}q^{-j-k}}{1+(-1)^{n-j}q^{-n}\gs{j}{1}{q}^{-}-q^{-2n+j-1}}\\&\qquad\left(1+(-1)^{n-j-k}q^{j+k-n-1}-(-1)^{n-j-k}q^{j+k-n-2}-q^{j+k-n-3}-q^{2j+2k-2n-3}\right)\\&\qquad\left(1-\frac{1}{q}+\frac{1}{q^2}-\frac{2}{q^3}\right)\\
        &= \frac{1+(-1)^{j}q^{-j}+(-1)^{k}\gs{j}{1}{q}^{-}q^{-j-k}}{1+(-1)^{n-j}q^{-n}\gs{j}{1}{q}^{-}-q^{-2n+j-1}}\left(1-\frac{1}{q}+\frac{1}{q^2}-\frac{2}{q^3}\right)\\&\qquad\left(1+(-1)^{n-j-k}q^{j+k-n-1}-(-1)^{n-j-k}q^{j+k-n-2}-q^{j+k-n-3}-q^{2j+2k-2n-3}\right)\:.
    \end{align*}

    Now, we have that
    \begin{align*}
        &\frac{1}{1+(-1)^{n-j}q^{-n}\gs{j}{1}{q}^{-}-q^{-2n+j-1}}\geq1-(-1)^{n-j}\gs{j}{1}{q}^{-}q^{-n}\\
        \Leftrightarrow\qquad&1\geq1-q^{-2n+j-1}-\left(\gs{j}{1}{q}^{-}\right)^{2}q^{-2n}+(-1)^{n-j}\gs{j}{1}{q}^{-}q^{-3n+j-1}\:,
    \end{align*}
 
    which is true since
    \[
        q^{-2n+j-1}> q^{-3n+j-1}q^{j-1}\geq\gs{j}{1}{q}^{-}q^{-3n+j-1}\:.
    \]
       
    So, we get that

    \begin{align*}
        \rho_{j,k,n}&\geq \left(1+(-1)^{j}q^{-j}+(-1)^{k}\gs{j}{1}{q}^{-}q^{-j-k}\right)\left(1-(-1)^{n-j}q^{-n}\gs{j}{1}{q}^{-}\right)\\&\qquad\left(1+(-1)^{n-j-k}q^{j+k-n-1}-(-1)^{n-j-k}q^{j+k-n-2}-q^{j+k-n-3}-q^{2j+2k-2n-3}\right)\\&\qquad\left(1-\frac{1}{q}+\frac{1}{q^2}-\frac{2}{q^3}\right)\\
        &\geq\left(1+(-1)^{j}q^{-j}+(-1)^{k}\gs{j}{1}{q}^{-}q^{-j-k}\right)\left(1-q^{-j-k-1}\gs{j}{1}{q}^{-}\right)\\&\qquad\left(1-q^{-1}+q^{-2}-2q^{-3}+(-1)^{n-j-k}q^{j+k-n-1}\left(1-2q^{-1}+2q^{-2}-3q^{-3}+2q^{-4}\right)\right.\\&\qquad\quad\left.-q^{j+k-n-3}\left(1-q^{-1}+q^{-2}-2q^{-3}\right)-q^{2j+2k-2n-3}\left(1-q^{-1}+q^{-2}-2q^{-3}\right)\right)\\
        &=\left(1+(-1)^{j}q^{-j}+(-1)^{k}\gs{j}{1}{q}^{-}q^{-j-k-1}\left(q-(-1)^{k}-(-1)^{j-k}q^{-j}-\gs{j}{1}{q}^{-}q^{-j-k}\right)\right)\\&\qquad\left(1-q^{-1}+q^{-2}-2q^{-3}+(-1)^{n-j-k}q^{j+k-n-1}\left(1-2q^{-1}+2q^{-2}-3q^{-3}+2q^{-4}\right)\right.\\&\qquad\quad\left.-q^{j+k-n-3}\left(1-q^{-1}+q^{-2}-2q^{-3}\right)-q^{2j+2k-2n-3}\left(1-q^{-1}+q^{-2}-2q^{-3}\right)\right)\:.
    \end{align*}
    Note that $q-(-1)^{k}-(-1)^{j-k}q^{-j}-\gs{j}{1}{q}^{-}q^{-j-k}>0$ for all $j,k\geq2$. So, for a fixed $j$ the term $(-1)^{k}\gs{j}{1}{q}^{-}q^{-j-k-1}\left(q-(-1)^{k}-(-1)^{j-k}q^{-j}-\gs{j}{1}{q}^{-}q^{-j-k}\right)$ reaches its minimum for an odd value of $k$. Since also $q-(-1)^{k}-(-1)^{j-k}q^{-j}-\gs{j}{1}{q}^{-}q^{-j-k}<q^{2}$ for $j,k\geq3$, the minimum is reached for $k=3$. We find that
    \begin{align*}
        \rho_{j,k,n}&\geq\left(1+(-1)^{j}q^{-j}-\gs{j}{1}{q}^{-}q^{-j-4}\left(q+1+(-1)^{j}q^{-j}-\gs{j}{1}{q}^{-}q^{-j-3}\right)\right)\\&\qquad\left(1-q^{-1}+q^{-2}-2q^{-3}+(-1)^{n-j-k}q^{j+k-n-1}\left(1-2q^{-1}+2q^{-2}-3q^{-3}+2q^{-4}\right)\right.\\&\qquad\quad\left.-q^{j+k-n-3}\left(1-q^{-1}+q^{-2}-2q^{-3}\right)-q^{2j+2k-2n-3}\left(1-q^{-1}+q^{-2}-2q^{-3}\right)\right)\\
        &\geq\left(1+(-1)^{j}q^{-j}\left(1-\gs{j}{1}{q}^{-}q^{-j-4}\right)-q^{-4}-q^{-5}+q^{-11}(q-1)^{2}\right)\\&\qquad\left(1-q^{-1}+q^{-2}-2q^{-3}+(-1)^{n-j-k}q^{j+k-n-1}\left(1-2q^{-1}+2q^{-2}-3q^{-3}+2q^{-4}\right)\right.\\&\qquad\quad\left.-q^{j+k-n-3}\left(1-q^{-1}+q^{-2}-2q^{-3}\right)-q^{2j+2k-2n-3}\left(1-q^{-1}+q^{-2}-2q^{-3}\right)\right)\\
        &\geq\left(1-q^{-3}\left(1-(q^{2}-q+1)q^{-7}\right)-q^{-4}-q^{-5}+q^{-11}(q-1)^{2}\right)\\&\qquad\left(1-q^{-1}+q^{-2}-2q^{-3}+(-1)^{n-j-k}q^{j+k-n-1}\left(1-2q^{-1}+2q^{-2}-3q^{-3}+2q^{-4}\right)\right.\\&\qquad\quad\left.-q^{j+k-n-3}\left(1-q^{-1}+q^{-2}-2q^{-3}\right)-q^{2j+2k-2n-3}\left(1-q^{-1}+q^{-2}-2q^{-3}\right)\right)\\
        &=\left(1-q^{-3}-q^{-4}-q^{-5}+q^{-8}-q^{-10}+q^{-11}\right)\\&\qquad\left(1-q^{-1}+q^{-2}-2q^{-3}+(-1)^{n-j-k}q^{j+k-n-1}\left(1-2q^{-1}+2q^{-2}-3q^{-3}+2q^{-4}\right)\right.\\&\qquad\quad\left.-q^{j+k-n-3}\left(1-q^{-1}+q^{-2}-2q^{-3}\right)-q^{2j+2k-2n-3}\left(1-q^{-1}+q^{-2}-2q^{-3}\right)\right)\:.
    \end{align*}
   
    Now we distinguish between two cases. If $n=j+k+1$ we find
    \begin{align*}
        \rho_{j,k,n}&\geq\left(1-q^{-3}-q^{-4}-q^{-5}+q^{-8}-q^{-10}+q^{-11}\right)\\&\qquad\left(1-q^{-1}-3q^{-4}+3q^{-5}-2q^{-6}+q^{-7}+2q^{-8}\right)\\
        &=1-q^{-1}-q^{-3}-3q^{-4}+3q^{-5}-q^{-6}+4q^{-7}+3q^{-8}+q^{-9}-3q^{-10}+q^{-11}-7q^{-12}\\&\qquad+q^{-13}+q^{-14}-5q^{-15}+7q^{-16}-3q^{-17}-q^{-18}+2q^{-19}\\
        &\geq1-q^{-1}-q^{-3}-3q^{-4}\:.
    \end{align*}
     Note that $1-2q^{-1}+2q^{-2}-3q^{-3}+2q^{-4}\geq0$ and $1-q^{-1}+q^{-2}-2q^{-3}\geq0$ for $q\geq2$, so for $n\geq j+k+2$ we find
    \begin{align*}
        \rho_{j,k,n}&\geq\left(1-q^{-3}-q^{-4}-q^{-5}+q^{-8}-q^{-10}+q^{-11}\right)\\&\qquad\left(1-q^{-1}+q^{-2}-3q^{-3}+2q^{-4}-3q^{-5}+4q^{-6}-4q^{-7}+3q^{-8}-q^{-9}+2q^{-10}\right)\\
        &=1-q^{-1}+q^{-2}-4q^{-3}+2q^{-4}-4q^{-5}+7q^{-6}-4q^{-7}+8q^{-8}-5q^{-9}+5q^{-10}-4q^{-11}\\&\qquad+2q^{-12}-3q^{-13}-2q^{-14}-q^{-15}-4q^{-16}+7q^{-17}-5q^{-18}+4q^{-19}-3q^{-20}+2q^{-21}\\
        &\geq1-q^{-1}+q^{-2}-4q^{-3}\:.\qedhere
    \end{align*}
\end{proof}

\begin{remark} We can derive the following from Theorem \ref{th:hermitianrhobound2}: 
\begin{itemize}
\item for $j,k\geq 2$ and $n\geq j+k+2$: $\rho_{j,k,n}\geq 1-\frac{3}{2}q^{-1}$,
\item for $j,k\geq 2$: $\rho_{j,k,j+k+1}\geq 1-\frac{13}{8}q^{-1}$,
\item for $k\geq 1$ and $n\geq k+2$: $\rho_{1,k,n}\geq 1-\frac{5}{3}q^{-1}$. 
\end{itemize}
\end{remark}

\begin{remark}
    The coefficients of $1$, $q^{-1}$ and $q^{-2}$ from the $j,k\geq2$ case in Theorem \ref{th:hermitianrhobound2} can not be improved since some more detailed analysis shows that
    \begin{align*}
        \rho_{j,k,n}=
        \begin{cases}
            1-q^{-1}+O\left(q^{-3}\right) &\quad n=j+k+1\text{ and }j,k\geq2,\\
            1-q^{-1}+q^{-2}+O\left(q^{-3}\right) &\quad n\geq j+k+2\text{ and }j,k\geq2.
        \end{cases}
    \end{align*}
\end{remark}

\section{The symplectic case}
\subsection{Proof of the main result (symplectic)}\label{sec:symplectic}

We first introduce some notation, which will be completely analogous as those for the hermitian case. Note that we are working in an even-dimensional vector space here.

\begin{definition}
    Given a non-degenerate symplectic form $f$ on $\F_{q}^{2n}$, 
    we define $\alpha_{i,j,2n}$ as the number of $i$-singular $j$-spaces with respect to $f$.
\end{definition}

The symplectic group $\PGSp(2n,q)$, i.e.~the group of all linear maps preserving the non-degenerate symplectic form up to scalar multiple, acts transitively on the $i$-singular $2j$-spaces (see \cite[Theorem 3.7]{wanbook}), so the following is well-defined.

\begin{definition}
	Consider a non-degenerate symplectic form $f$ on $\F_{q}^{2n}$.
	\begin{itemize}
		\item Given an $i$-singular $j$-space $\pi$ in $\F_{q}^{2n}$, $\beta_{i,j,2n,2k}$ is the number of non-singular $2k$-spaces $\sigma\supseteq\pi$ in $\F_{q}^{2n}$.
		\par For the case of co-dimension 2-spaces, $2k=2n-2$, we use the notation $\beta_{i,j,2n}=\beta_{i,j,2n,2n-2}$.
		\item Given an $i$-singular $j$-space $\pi$ in $\F_{q}^{2n}$, $\gamma_{i,j,2n,2k}$ is the number of non-singular $2k$-spaces $\sigma$ in $\F_{q}^{2n}$ such that $\sigma\cap\pi$ is trivial, and $\langle\pi,\sigma\rangle$ is a non-singular $(2k+j)$-space.
		\par For the case $2k=2n-j$, where the two spaces span $\F_{q}^{2n}$ we use the notation $\gamma_{i,j,2n}=\gamma_{i,j,2n,2n-j}$.
	\end{itemize}
\end{definition}

Throughout this section the underlying field $\F_q$ is fixed. For this reason we have omitted the $q$ in the notation $\alpha_{i,j,2n}$, $\beta_{i,j,2n,2k}$, $\beta_{i,j,2n}$, $\gamma_{i,j,2n,2k}$ and $\gamma_{i,j,2n}$. As for the hermitian case, the expression for $\alpha_{i,j,2n}$ is known in the literature. Since an $i$-singular $j$-space necessarily has $j-i$ even, we will be able to restrict ourselves to values of $\alpha_{i,j,2n}$ of the form $\alpha_{j-2l,j,2n}$.

\begin{lemma}[{\cite[Theorem 3.18]{wanbook}}]\label{lem:alphasymplectic}
	If $i\equiv j+1\pmod{2}$, then $\alpha_{i,j,2n}=0$. 
	For $\max\{0,j-n\}\leq \ell\leq \frac{j}{2}$ and $j\leq 2n$ we have that
	\begin{align*}
		\alpha_{j-2\ell,j,2n}(q)
        &=q^{2\ell(n-j+\ell)}\frac{\psi^{-}_{n+\ell-j+1,n}(q^{2})}{\psi^{-}_{1,\ell}(q^{2})\psi^{-}_{1,j-2\ell}(q)}
        =q^{2\ell(n-j+\ell)}\gs{n}{\ell}{q^2}\gs{n-\ell}{j-2\ell}{q}\psi^{+}_{n+\ell-j+1,n-\ell}(q)\:.
	\end{align*}
\end{lemma}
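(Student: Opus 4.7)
The plan is to prove the two claims of the lemma separately.

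\textbf{The parity claim.} If $\pi$ is an $i$-singular $j$-space, then by definition its radical $\mathrm{rad}(\pi)=\pi\cap\pi^{\perp}$ has dimension $i$, and the bilinear form induced on the quotient $\pi/\mathrm{rad}(\pi)$ is a non-degenerate alternating form. Since a non-degenerate alternating form exists only on an even-dimensional space, $j-i$ must be even. Hence $\alpha_{i,j,2n}=0$ whenever $i\equiv j+1\pmod{2}$.

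\textbf{The counting formula.} Write $i=j-2\ell$. The strategy is to count pairs $(R,\pi)$, where $R$ is a totally isotropic $(j-2\ell)$-space in $\F_{q}^{2n}$ and $\pi$ is a $j$-space with $\mathrm{rad}(\pi)=R$. Because each $\pi$ has a unique radical, this count equals $\alpha_{j-2\ell,j,2n}$. For fixed $R$, admissible $\pi$ are precisely the preimages under $R^{\perp}\to R^{\perp}/R$ of non-singular $2\ell$-subspaces of the quotient; the latter is naturally a symplectic space of rank $n-(j-2\ell)$ over $\F_{q}$, since $R^{\perp}/R$ inherits a non-degenerate alternating form whose dimension computation uses $\dim R^{\perp}=2n-(j-2\ell)$. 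This yields the product
\[
\alpha_{j-2\ell,j,2n}=T\cdot N,
\]
where $T$ is the number of totally isotropic $(j-2\ell)$-spaces in $\F_{q}^{2n}$, and $N$ is the number of non-singular $2\ell$-spaces in a symplectic space of rank $n-j+2\ell$.

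\textbf{The two factors.} Both counts are standard. Counting ordered tuples $(v_{1},\ldots,v_{j-2\ell})$ of linearly independent pairwise orthogonal vectors (the $k$-th vector must lie in $\{v_{1},\ldots,v_{k-1}\}^{\perp}\setminus\langle v_{1},\ldots,v_{k-1}\rangle$, giving $q^{2n-k+1}-q^{k-1}$ choices) and then dividing by the number of ordered bases of a $(j-2\ell)$-space yields
\[
T=\frac{\psi^{-}_{n-j+2\ell+1,n}(q^{2})}{\psi^{-}_{1,j-2\ell}(q)}.
\]
For $N$, the symplectic group acts transitively on non-singular $2\ell$-spaces with stabilizer isomorphic to $\mathrm{Sp}(2\ell,q)\times \mathrm{Sp}(2(n-j+\ell),q)$, so with $m=n-j+2\ell$ and $|\mathrm{Sp}(2m,q)|=q^{m^{2}}\psi^{-}_{1,m}(q^{2})$, a short computation gives
\[
N=q^{2\ell(n-j+\ell)}\gs{n-j+2\ell}{\ell}{q^{2}}.
\]
Multiplying $T$ and $N$, and concatenating the two $\psi^{-}(q^{2})$-products in the numerator, produces the first stated form for $\alpha_{j-2\ell,j,2n}$.

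\textbf{The second form.} One uses $q^{2k}-1=(q^{k}-1)(q^{k}+1)$ to split $\psi^{-}_{n-j+\ell+1,n-\ell}(q^{2})=\psi^{-}_{n-j+\ell+1,n-\ell}(q)\,\psi^{+}_{n-j+\ell+1,n-\ell}(q)$, and regroups the $q$-factors so that $\psi^{-}_{n-j+\ell+1,n-\ell}(q)/\psi^{-}_{1,j-2\ell}(q)=\gs{n-\ell}{j-2\ell}{q}$, leaving the factor $\psi^{+}_{n+\ell-j+1,n-\ell}(q)$ exposed. The main difficulty is nothing more than careful bookkeeping of index ranges in these product manipulations; there is no conceptual obstacle.
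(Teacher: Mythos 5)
Your proof is correct. Note that the paper itself offers no proof of this lemma: it is imported verbatim from Wan's book (Theorem 3.18 of \cite{wanbook}), so there is nothing to compare against except the citation. Your self-contained derivation is the standard one and all the pieces check out: the parity claim via the induced non-degenerate alternating form on $\pi/\mathrm{rad}(\pi)$; the fibration of $i$-singular $j$-spaces over their (uniquely determined, totally isotropic) radicals $R$, using that $\mathrm{rad}(\pi)=R$ is equivalent to $\pi/R$ being non-singular in the symplectic quotient $R^{\perp}/R$ of dimension $2(n-j+2\ell)$; the count $T=\psi^{-}_{n-j+2\ell+1,n}(q^{2})/\psi^{-}_{1,j-2\ell}(q)$ of totally isotropic $(j-2\ell)$-spaces (where, as you implicitly use, isotropy of individual vectors is automatic for an alternating form, so only pairwise orthogonality must be imposed); and the orbit--stabilizer computation $N=q^{2\ell(n-j+\ell)}\gs{n-j+2\ell}{\ell}{q^{2}}$, whose exponent $2\ell(m-\ell)$ with $m=n-j+2\ell$ is exactly $2\ell(n-j+\ell)$. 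The product $T\cdot N$ concatenates to the first displayed form, and the passage to the second form via $\psi^{-}_{a,b}(q^{2})=\psi^{-}_{a,b}(q)\,\psi^{+}_{a,b}(q)$ applied on the index range $[n-j+\ell+1,\,n-\ell]$ is the correct bookkeeping. The only cosmetic remark is that the two ways you name the quotient's rank, $n-(j-2\ell)$ and $n-j+2\ell$, are of course the same number, so there is no inconsistency.
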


We are now ready to derive a formula for $\beta_{i,j,2n,2k}$.
\begin{lemma}\label{lem:betasymplectic}
	If $i\equiv j+1\pmod{2}$, then $\beta_{i,j,2n,2k}$ is undefined. 
	For $\max\{0,j-k\}\leq \ell\leq \frac{j}{2}$ and $j\leq 2k\leq 2n-2$ we have that
	\[
		\beta_{j-2\ell,j,2n,2k}=q^{2(k-\ell)(n-k)}\gs{n-j+\ell}{k-j+\ell}{q^2}\:.
	\]
\end{lemma}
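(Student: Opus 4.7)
The plan is to imitate the proof of Lemma \ref{lem:betahermitian} via a double counting argument. Fix a non-degenerate symplectic form $f$ on $\F_q^{2n}$, and count pairs $(\pi,\sigma)$ where $\pi$ is a $(j-2\ell)$-singular $j$-space and $\sigma \supseteq \pi$ is a non-singular $2k$-space. Choosing $\pi$ first gives $\alpha_{j-2\ell,j,2n}\beta_{j-2\ell,j,2n,2k}$. To choose $\sigma$ first, note that the restriction of $f$ to any non-singular $2k$-space is again a non-degenerate symplectic form on a $2k$-dimensional space, so the number of $(j-2\ell)$-singular $j$-subspaces of $\sigma$ is $\alpha_{j-2\ell,j,2k}$, yielding the count $\alpha_{0,2k,2n}\alpha_{j-2\ell,j,2k}$. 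The resulting identity
\[
\alpha_{j-2\ell,j,2n}\,\beta_{j-2\ell,j,2n,2k} \;=\; \alpha_{0,2k,2n}\,\alpha_{j-2\ell,j,2k}
\]
determines $\beta_{j-2\ell,j,2n,2k}$ as a quotient, using that $\alpha_{j-2\ell,j,2n}\neq 0$ under the stated hypotheses on $\ell$.

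The remainder of the proof is an algebraic simplification. Substituting the first expression from Lemma \ref{lem:alphasymplectic} three times, the exponent of $q$ becomes
\[
2k(n-k)+2\ell(k-j+\ell)-2\ell(n-j+\ell) \;=\; 2(k-\ell)(n-k),
\]
while the factors $\psi^-_{1,\ell}(q^2)$ and $\psi^-_{1,j-2\ell}(q)$ cancel. What remains on the $\psi^-(q^2)$-side is
\[
\frac{\psi^-_{n-k+1,n}(q^2)\,\psi^-_{k+\ell-j+1,k}(q^2)}{\psi^-_{1,k}(q^2)\,\psi^-_{n+\ell-j+1,n}(q^2)} \;=\; \frac{\psi^-_{n-k+1,\,n-j+\ell}(q^2)}{\psi^-_{1,\,k-j+\ell}(q^2)} \;=\; \gs{n-j+\ell}{k-j+\ell}{q^2}\:,
\]
where the middle equality collapses the telescoping ratios and is valid because $\ell\geq\max\{0,j-k\}$ guarantees the indices on the left-hand side define non-empty products and the index $k-j+\ell$ is non-negative. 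This bookkeeping is essentially identical to that in Lemma \ref{lem:betahermitian} with $\psi^-(q^2)$ replacing $\varphi^-(q)$, so no real obstacle is expected; the only point requiring a little care is to check that the case $\ell=j-k$ (empty numerator $\psi^-_{n-k+1,n-j+\ell}$) correctly produces $\gs{n-j+\ell}{0}{q^2}=1$, which is consistent with the convention following Definition \ref{def:functionsphipsichi}.
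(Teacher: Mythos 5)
Your proposal is correct and follows essentially the same route as the paper: the same double count of pairs $(\pi,\sigma)$ yielding $\alpha_{j-2\ell,j,2n}\,\beta_{j-2\ell,j,2n,2k}=\alpha_{0,2k,2n}\,\alpha_{j-2\ell,j,2k}$, followed by the same substitution of Lemma \ref{lem:alphasymplectic} and telescoping of the $\psi^{-}(q^2)$-factors. The exponent computation and the identification of the remaining ratio with $\gs{n-j+\ell}{k-j+\ell}{q^2}$ both check out against the paper's proof.
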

\begin{proof}
	We count the tuples $(\pi,\sigma)$ with $\pi$ a $(j-2\ell)$-singular $j$-space and $\sigma\supseteq\pi$ a non-singular $2k$-space, in two ways. We find that
	\[
		\alpha_{j-2\ell,j,2n}\beta_{j-2\ell,j,2n,2k}=\alpha_{0,2k,2n}\alpha_{j-2\ell,j,2k}\:.
	\]
	Hence, using Lemma \ref{lem:alphasymplectic} we find
    \begin{align*}
		\beta_{j-2\ell,j,2n,2k}&=\frac{\alpha_{0,2k,2n}\:\alpha_{j-2\ell,j,2k}}{\alpha_{j-2\ell,j,2n}}\\
		&=\frac{q^{2k(n-k)}\psi^{-}_{n-k+1,n}(q^{2})}{\psi^{-}_{1,k}(q^{2})}\frac{q^{2\ell(k-j+\ell)}\psi^{-}_{k+\ell-j+1,k}(q^{2})}{\psi^{-}_{1,\ell}(q^{2})\psi^{-}_{1,j-2\ell}(q)}\frac{\psi^{-}_{1,\ell}(q^{2})\psi^{-}_{1,j-2\ell}(q)}{q^{2\ell(n-j+\ell)}\psi^{-}_{n+\ell-j+1,n}(q^{2})}\\
		&=q^{2(k-\ell)(n-k)}\frac{\psi^{-}_{n-k+1,n-j+\ell}(q^{2})}{\psi^{-}_{1,k-j+\ell}(q^{2})}\\
		&=q^{2(k-\ell)(n-k)}\gs{n-j+\ell}{k-j+\ell}{q^2}\:.\qedhere
	\end{align*}
\end{proof}

\begin{corollary}\label{cor:betasymplectic}
	If $i\equiv j+1\pmod{2}$, then $\beta_{i,j,2n}$ is undefined. 
	For $\max\{0,j-n+1\}\leq \ell\leq \frac{j}{2}$ and $j\leq 2n-2$ we have that
	\[
		\beta_{j-2\ell,j,2n}(q)=q^{2(n-\ell-1)}\gs{n-j+\ell}{1}{q^2}\:.
	\]
\end{corollary}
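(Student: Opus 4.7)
The plan is to deduce this directly from Lemma~\ref{lem:betasymplectic} by specializing to $2k=2n-2$, i.e.\ $k=n-1$, which matches the shorthand $\beta_{i,j,2n}=\beta_{i,j,2n,2n-2}$ introduced in the definitions. The constraint $j\le 2k\le 2n-2$ becomes $j\le 2n-2$, and the condition $\max\{0,j-k\}\le \ell\le j/2$ becomes $\max\{0,j-n+1\}\le\ell\le j/2$, which is exactly the range stated. The parity condition $i\equiv j+1\pmod 2$ is handled by writing $i=j-2\ell$ as in the lemma, so there is nothing to check there.

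With this substitution, the prefactor in Lemma~\ref{lem:betasymplectic} collapses as
\[
q^{2(k-\ell)(n-k)}=q^{2(n-1-\ell)\cdot 1}=q^{2(n-\ell-1)},
\]
and the Gaussian binomial becomes
\[
\gs{n-j+\ell}{k-j+\ell}{q^2}=\gs{n-j+\ell}{n-j+\ell-1}{q^2}.
\]
The only remaining step is to rewrite this last binomial using the standard symmetry $\gs{m}{m-1}{q^2}=\gs{m}{1}{q^2}$ (valid for all $m\ge 1$, with both sides equalling $\frac{q^{2m}-1}{q^2-1}$), applied with $m=n-j+\ell$. The condition $m\ge 1$ is guaranteed by $\ell\ge\max\{0,j-n+1\}$ together with $j\le 2n-2$.

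Combining these two simplifications yields the claimed identity
\[
\beta_{j-2\ell,j,2n}=q^{2(n-\ell-1)}\gs{n-j+\ell}{1}{q^2}.
\]
There is no real obstacle here: this is purely a substitution plus a one-line binomial symmetry. If anything, the only point requiring mild care is to record that the parameter ranges in Lemma~\ref{lem:betasymplectic} restrict correctly under the specialization $k=n-1$, which is precisely what the hypothesis $\max\{0,j-n+1\}\le\ell\le j/2$ and $j\le 2n-2$ encode.
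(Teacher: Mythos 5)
Your proof is correct and matches the paper's (implicit) derivation exactly: the corollary is obtained by setting $k=n-1$ in Lemma \ref{lem:betasymplectic} and applying the symmetry $\gs{m}{m-1}{q^2}=\gs{m}{1}{q^2}$, with the parameter ranges specializing as you describe.
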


The proof of the main theorem uses roughly the same idea as in the hermitian case: we use a double counting argument in order to find a recursive expression for $\gamma_{2i,2j,2n}$ \eqref{eq:symplecticpreinduction}. However, in this case, we cannot use hyperplanes for the counting argument since no hyperplane of $\mathbb{F}^{2n}$ is non-singular, so we use co-dimension $2$-spaces. This complicates matters slightly; in particular, the coefficients in the recursive expression become more complicated.

\begin{theorem}\label{th:gammasymplectic}
	If $i$ or $j$ is odd, then $\gamma_{i,j,2n}=0$.
	For $0\leq i\leq \min\{j,n-j\}$ and $j\leq n-1$ we have that
	\[
		\gamma_{2i,2j,2n}=q^{j(4n-5j)}\chi_{1,i}(q)\sum_{m=0}^{j-i}\chi_{i+1,j-m}(q)\gs{j-i}{m}{q^{2}}q^{m(2j+2i-2n+m-1)}\:.
	\]
\end{theorem}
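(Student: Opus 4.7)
The plan is to adapt the double-counting and induction strategy of Theorem~\ref{th:gammahermitian} to the symplectic setting. First I would dispose of the vanishing assertion: if $j$ is odd then no non-singular $(2n-j)$-space can exist, since the restriction of a symplectic form to an odd-dimensional space is always degenerate; if $j$ is even and $i$ is odd then there are no $i$-singular $j$-spaces at all, by the parity constraint reflected in Lemma~\ref{lem:alphasymplectic}. Either way $\gamma_{i,j,2n}=0$. Hence I may assume $i$ and $j$ both even, and proceed by induction on $j$ with base case $j=0$ giving $\gamma_{0,0,2n}=1$, matching the empty product in the claimed formula.

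For the induction step, fix a $2i$-singular $2j$-space $\pi$ with radical $\overline{\pi}$ of dimension $2i$, and double-count tuples $(\sigma,\tau)$ where $\tau$ is a non-singular $(2n-2j)$-space trivially intersecting $\pi$ (so $\langle\pi,\tau\rangle=\mathbb{F}_q^{2n}$) and $\sigma\supseteq\tau$ is a non-singular $(2n-2)$-space. Summing over $\tau$ first yields $\gamma_{2i,2j,2n}\cdot\beta_{0,2n-2j,2n,2n-2}$. Summing over $\sigma$ first requires analysing $\pi'=\sigma\cap\pi$, which must have dimension $2j-2$; unlike the hermitian setting, where the analogous $\pi'$ is a hyperplane of $\pi$, the codimension is now $2$, so strictly more cases arise.

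The key identity I would use is
$\dim\mathrm{rad}(\pi')=\dim(\pi'\cap\overline{\pi})+\dim\mathrm{rad}((\pi'+\overline{\pi})/\overline{\pi})$,
with the second radical taken with respect to the non-degenerate symplectic form on $\pi/\overline{\pi}$. Writing $r=\dim(\pi'\cap\overline{\pi})\in\{2i-2,2i-1,2i\}$ produces exactly four admissible combinations $(r,\dim\mathrm{rad}(\pi'))\in\{(2i,2i+2),(2i,2i),(2i-1,2i),(2i-2,2i-2)\}$: the two with $r=2i$ are enumerated by $\alpha_{2,2j-2i-2,2j-2i}$ and $\alpha_{0,2j-2i-2,2j-2i}$ respectively, while the two with $r<2i$ are enumerated via Lemma~\ref{lem:segre} applied in the quotient $\pi/A$ with $A=\pi'\cap\overline{\pi}$. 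For each such $\pi'$ the number of admissible $\sigma$ equals $\beta_{\mathrm{rad}(\pi'),2j-2,2n}-\beta_{2i,2j,2n}$ (subtracting those $\sigma$'s that contain all of $\pi$), computed through Corollary~\ref{cor:betasymplectic}, and the induction hypothesis supplies the inner count $\gamma_{\mathrm{rad}(\pi'),2j-2,2n-2}$.

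Assembling these contributions produces a recurrence expressing $\gamma_{2i,2j,2n}$ as a linear combination of $\gamma_{2i+2,2j-2,2n-2}$, $\gamma_{2i,2j-2,2n-2}$ and $\gamma_{2i-2,2j-2,2n-2}$. Substituting the conjectured closed form and simplifying via identities such as $\chi_{1,i+1}(q)=(q^{2i+1}-1)\chi_{1,i}(q)$ together with standard manipulations of $\gs{j-i}{m}{q^{2}}$ should make the three shifted sums over $m$ collapse to the claimed formula. The main obstacle is the sheer bulk of this algebraic verification: four contributing cases (one more than in the hermitian proof) each feed a sum over $m$, and combining them requires re-indexing $m\mapsto m\pm 1$ and telescoping carefully, mirroring and somewhat extending the long computation used in Theorem~\ref{th:gammahermitian}. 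No genuinely new idea is needed beyond the hermitian template, but extra care is required to track the interplay between the symplectic $\chi_{\cdot,\cdot}$ factors and the base-$q^{2}$ Gaussian coefficients.
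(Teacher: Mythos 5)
Your overall architecture matches the paper's proof exactly: the same parity argument for the vanishing cases, induction on $j$, the same double count of pairs $(\sigma,\tau)$ with $\sigma$ a non-singular $(2n-2)$-space, and the same four-fold classification of $\pi'=\sigma\cap\pi$ according to $\dim(\pi'\cap\overline{\pi})$ and $\dim\mathrm{rad}(\pi')$, with the first two classes enumerated by $\alpha_{2,2j-2i-2,2j-2i}$ and $\alpha_{0,2j-2i-2,2j-2i}$ and the last two via Lemma \ref{lem:segre}. However, there is a genuine gap in your count of the admissible $\sigma$ for a fixed $\pi'$. You take this to be $\beta_{\mathrm{rad}(\pi'),2j-2,2n}-\beta_{2i,2j,2n}$, ``subtracting those $\sigma$'s that contain all of $\pi$.'' That is the hermitian-style correction, and it is valid only when $\sigma$ has codimension $1$, so that $\sigma\cap\pi$ is either $\pi'$ or all of $\pi$. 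Here $\sigma$ has codimension $2$, so $\sigma\cap\pi$ can also be any of the $q+1$ hyperplanes ($(2j-1)$-spaces) of $\pi$ containing $\pi'$; such $\sigma$ are counted by $\beta_{\mathrm{rad}(\pi'),2j-2,2n}$ but contribute no $\tau$ at all (a $(2n-2j)$-space inside $\sigma$ cannot avoid a $(2j-1)$-dimensional $\sigma\cap\pi$ for dimension reasons), so they must be excluded. You flag that ``strictly more cases arise'' because of the codimension, but then apply the codimension-one subtraction anyway.

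Concretely, the correct coefficient in the first case is
\[
\beta_{2i+2,2j-2,2n}-(q+1)\bigl(\beta_{2i+1,2j-1,2n}-\beta_{2i,2j,2n}\bigr)-\beta_{2i,2j,2n}\,,
\]
obtained by inclusion--exclusion over the $q+1$ intermediate hyperplanes, all of which are $(2i+1)$-singular; by Corollary \ref{cor:betasymplectic} this equals $q^{2(n-j+i)-1}(q-1)(q^{2(n-j-i)}-1)$, whereas your expression $\beta_{2i+2,2j-2,2n}-\beta_{2i,2j,2n}$ equals $q^{2(n-j+i)-2}(q^{2}+1)(q^{2(n-j-i)}-1)$ --- genuinely different, so your recurrence (whose solution is forced, since the coefficient $\beta_{0,2n-2j,2n}\neq 0$) cannot produce the stated closed form. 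Moreover, in the case where $\pi'$ meets $\overline{\pi}$ in a $(2i-1)$-space, the correction is asymmetric: exactly one of the $q+1$ hyperplanes of $\pi$ through $\pi'$ contains $\overline{\pi}$ and is $(2i+1)$-singular, while the other $q$ are $(2i-1)$-singular, giving the term $\beta_{2i,2j-2,2n}-\beta_{2i+1,2j-1,2n}-q\,\beta_{2i-1,2j-1,2n}+q\,\beta_{2i,2j,2n}$. Tracking this finer hyperplane analysis is precisely the new difficulty of the symplectic case relative to the hermitian template, and it is missing from your argument.
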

\begin{proof}
	We prove this theorem using induction on $j$. One can see directly that $\gamma_{0,0,2n}=1$ for all $n\geq1$. Now, consider a symplectic form on $\F^{2n}_{q}$ and a fixed $2i$-singular $2j$-space $\pi$ with respect to it, with $j\geq1$. We denote the singular $2i$-dimensional subspace of $\pi$ by $\overline{\pi}$. We count the tuples $(\sigma,\tau)$ with $\sigma$ a non-singular $(2n-2)$-space, $\tau\subseteq\sigma$ a non-singular $(2n-2j)$-space disjoint from $\pi$. Note that $\langle\pi,\tau\rangle=\F^{2n}_{q}$. On the one hand there are $\gamma_{2i,2j,2n}$ choices for $\tau$, and for each of them $\beta_{0,2n-2j,2n}$ corresponding tuples. Now, we consider the non-singular $(2n-2)$-spaces. Each non-singular $(2n-2)$-space that contains a $(2n-2j)$-space disjoint from $\pi$ meets $\pi$ in a $(2j-2)$-space, so we look at the $(2j-2)$-spaces of $\pi$. We see that there are four possibilities.
	\begin{itemize}
		\item We first look at the $(2j-2)$-spaces through $\overline{\pi}$ that are $(2i+2)$-singular. The number of such $(2j-2)$-spaces corresponds to the number of 2-singular $(2j-2i-2)$-spaces with respect to a non-degenerate symplectic form on a $(2j-2i)$ space, which is $\alpha_{2,2j-2i-2,2j-2i}$. Any such $(2j-2)$-space is contained in $q+1$ hyperplanes ($(2j-1)$-spaces) of $\pi$, all of which are $(2i+1)$-singular. Hence, through such a $(2j-2)$-space there are $\beta_{2i+2,2j-2,2n}-(q+1)(\beta_{2i+1,2j-1,2n}-\beta_{2i,2j,2n})-\beta_{2i,2j,2n}$ non-singular $(2n-2)$-spaces not containing $\pi$. For each of these hyperplanes we have $\gamma_{2i+2,2j-2,2n-2}$ tuples.
		\item Secondly, we look at the $(2j-2)$-spaces through $\overline{\pi}$ that are $2i$-singular. The number of such $(2j-2)$-spaces corresponds to the number of non-singular $(2j-2i-2)$-spaces with respect to a non-singular symplectic form on a $(2j-2i)$ space, which is $\alpha_{0,2j-2i-2,2j-2i}$. Any such $(2j-2)$-space is contained in $q+1$ hyperplanes ($(2j-1)$-spaces) of $\pi$, all of which are $(2i+1)$-singular. Hence, through such a $(2j-2)$-space there are $\beta_{2i,2j-2,2n}-(q+1)(\beta_{2i+1,2j-1,2n}-\beta_{2i,2j,2n})-\beta_{2i,2j,2n}$ non-singular $(2n-2)$-spaces not containing $\pi$. For each of these hyperplanes we have $\gamma_{2i,2j-2,2n-2}$ tuples.
		\item We now look at the $(2j-2)$-spaces that meet $\overline{\pi}$ in precisely a $(2i-1)$-space. Necessarily they are $2i$-singular. The number of such $(2j-2)$-spaces is $\gs{2i}{2i-1}{q}\left(q^{2j-2i-1}\gs{2j-2i}{2j-2i-1}{q}\right)$, where the second factor follows from Lemma \ref{lem:segre}. Any such $(2j-2)$-space is contained in $q+1$ hyperplanes ($(2j-1)$-spaces) of $\pi$, one of which contains $\overline{\pi}$ and is thus $(2i+1)$-singular, while all the others are $(2i-1)$-singular. Hence, through such a $(2j-2)$-space there are $\beta_{2i,2j-2,2n}-(\beta_{2i+1,2j-1,2n}-\beta_{2i,2j,2n}-q(\beta_{2i-1,2j-1,2n}-\beta_{2i-1,2j,2n})-\beta_{2i,2j,2n}$ non-singular $(2n-2)$-spaces not containing $\pi$. For each of these hyperplanes we have $\gamma_{2i,2j-2,2n-2}$ tuples.
		\item Finally, we look at the $(2j-2)$-spaces that meet $\overline{\pi}$ in precisely a $(2i-2)$-space. Necessarily they are $(2i-2)$-singular. The number of such $(2j-2)$-spaces is $\gs{2i}{2i-2}{q}q^{2(2j-i)}$, where the second factor follows from Lemma \ref{lem:segre}. Any such $(2j-2)$-space is contained in $q+1$ hyperplanes ($(2j-1)$-spaces) of $\pi$, all of which are $(2i-1)$-singular. Hence, through such a $(j-2)$-space there are $\beta_{2i-2,2j-2,2n}-(q+1)(\beta_{2i-1,2j-1,2n}-\beta_{2i,2j,2n})-\beta_{2i,2j,2n}$ non-singular $(2n-2)$-spaces not containing $\pi$. For each of these hyperplanes we have $\gamma_{2i-2,2j-2,2n-2}$ tuples.
	\end{itemize}
	We find the following result.
	\begin{align}\label{eq:symplecticpreinduction}
		&\gamma_{2i,2j,2n}\beta_{0,2n-2j,2n}\nonumber\\&=\alpha_{2,2j-2i-2,2j-2i}\left(\beta_{2i+2,2j-2,2n}-(q+1)\beta_{2i+1,2j-1,2n}+q\beta_{2i,2j,2n}\right)\gamma_{2i+2,2j-2,2n-2}\nonumber\\
		&\qquad+\alpha_{0,2j-2i-2,2j-2i}\left(\beta_{2i,2j-2,2n}-(q+1)\beta_{2i+1,2j-1,2n}+q\beta_{2i,2j,2n}\right)\gamma_{2i,2j-2,2n-2}\nonumber\\
		&\qquad+\gs{2i}{1}{q}q^{2j-2i-1}\gs{2j-2i}{1}{q}\gamma_{2i,2j-2,2n-2}\nonumber\\&\qquad\qquad\left(\beta_{2i,2j-2,2n}-\beta_{2i+1,2j-1,2n}-q\beta_{2i-1,2j-1,2n}+q\beta_{2i,2j,2n}\right)\nonumber\\
		&\qquad+\gs{2i}{2}{q}q^{2(2j-2i)}\left(\beta_{2i-2,2j-2,2n}-(q+1)\beta_{2i-1,2j-1,2n}+q\beta_{2i,2j,2n}\right)\gamma_{2i-2,2j-2,2n-2}
	\end{align}
	Note that this equality is also valid if $i=j$ or $i=j-1$, or $i=0$. Then, only one, three or two of the cases appear, respectively. But the cases that do not appear, have a factor 0 in Equation \ref{eq:symplecticpreinduction}. Using Corollary \ref{cor:betasymplectic} we know that
	\begin{align*}
		&\beta_{2i+2,2j-2,2n}-(q+1)\beta_{2i+1,2j-1,2n}+q\beta_{2i,2j,2n}\\ 
		&=q^{2(n-j+i+1)}\gs{n-j-i}{1}{q^2}-(q+1)q^{2(n-j+i)}\gs{n-j-i}{1}{q^2}+q\:q^{2(n-j+i-1)}\gs{n-j-i}{1}{q^2}\\
		&=q^{2(n-j+i)-1}(q-1)(q^{2(n-j-i)}-1)\\[1em]
		&\beta_{2i,2j-2,2n}-(q+1)\beta_{2i+1,2j-1,2n}+q\beta_{2i,2j,2n}\\
		&=q^{2(n-j+i)}\gs{n-j-i+1}{1}{q^2}-(q+1)q^{2(n-j+i)}\gs{n-j-i}{1}{q^2}+q\:q^{2(n-j+i-1)}\gs{n-j-i}{1}{q^2}\\
		&=q^{2(n-j+i)-1}\left(q^{2(n-j-i)}(q-1)+1\right)\\[1em]
		&\beta_{2i,2j-2,2n}-\beta_{2i+1,2j-1,2n}-q\beta_{2i-1,2j-1,2n}+q\beta_{2i,2j,2n}\\
		&=q^{2(n-j+i)}\gs{n-j-i+1}{1}{q^2}-q^{2(n-j+i)}\gs{n-j-i}{1}{q^2}-q\:q^{2(n-j+i-1)}\gs{n-j-i+1}{1}{q^2}\\&\qquad+q\:q^{2(n-j+i-1)}\gs{n-j-i}{1}{q^2}\\
		&=q^{4(n-j)-1}(q-1)\\[1em]
		&\beta_{2i-2,2j-2,2n}-(q+1)\beta_{2i-1,2j-1,2n}+q\beta_{2i,2j,2n}\\
		&=q^{2(n-j+i-1)}\gs{n-j-i+2}{1}{q^2}-(q+1)q^{2(n-j+i-1)}\gs{n-j-i+1}{1}{q^2}\\&\qquad+q\:q^{2(n-j+i-1)}\gs{n-j-i}{1}{q^2}\\
		&=q^{4(n-j)-1}(q-1)
	\end{align*}
	Together with Lemma \ref{lem:alphasymplectic} this allows us to rewrite Equation \ref{eq:symplecticpreinduction} as follows 
	\begin{align}\label{eq:symplecticinduction}
		\gamma_{2i,2j,2n}\beta_{0,2n-2j,2n}&=\gs{j-i}{2}{q^2}\psi^{+}_{1,2}(q)q^{2(n-j+i)-1}(q-1)(q^{2(n-j-i)}-1)\gamma_{2i+2,2j-2,2n-2}\nonumber\\
		&\qquad+q^{2(j-i-1)}\gs{j-i}{1}{q^2}q^{2(n-j+i)-1}\left(q^{2(n-j-i)}(q-1)+1\right)\gamma_{2i,2j-2,2n-2}\nonumber\\
		&\qquad+\gs{2i}{1}{q}q^{2(j-i)-1}\gs{2j-2i}{1}{q}q^{4(n-j)-1}(q-1)\gamma_{2i,2j-2,2n-2}\nonumber\\
		&\qquad+\gs{2i}{2}{q}q^{4(j-i)}q^{4(n-j)-1}(q-1)\gamma_{2i-2,2j-2,2n-2}\nonumber\\
		&=\frac{\left(q^{2(j-i)}-1\right)\left(q^{2(j-i-1)}-1\right)}{q^{2}-1}q^{2(n-j+i)-1}(q^{2(n-j-i)}-1)\gamma_{2i+2,2j-2,2n-2}\nonumber\\
		&\qquad+q^{2n-3}\frac{q^{2(j-i)}-1}{q^2-1}\left(q^{2(n-j-i)}\left(q^{2(i+1)}+q^{2i+1}-q^2-1\right)+1\right)\gamma_{2i,2j-2,2n-2}\nonumber\\
		&\qquad+\frac{\left(q^{2i}-1\right)\left(q^{2i-1}-1\right)}{q^{2}-1}q^{4(n-i)-1}\gamma_{2i-2,2j-2,2n-2}
	\end{align}
	Using the induction hypothesis we can now rewrite Equation \ref{eq:symplecticinduction} as follows:
	\begin{align*}
		&\gamma_{2i,2j,2n}\beta_{0,2n-2j,2n}\\
		&=\frac{\left(q^{2(j-i)}-1\right)\left(q^{2(j-i-1)}-1\right)}{q^{2}-1}q^{2(n-j+i)-1}(q^{2(n-j-i)}-1)\\&\qquad\qquad q^{(j-1)(4n-5j+1)}\chi_{1,i+1}(q)\sum_{m=0}^{j-i-2}\chi_{i+2,j-m-1}(q)\gs{j-i-2}{m}{q^{2}}q^{m(2j+2i-2n+m+1)}\\
		&\qquad+q^{2n-3}\frac{q^{2(j-i)}-1}{q^2-1}\left(q^{2(n-j-i)}\left(q^{2(i+1)}+q^{2i+1}-q^2-1\right)+1\right)\\&\qquad\qquad q^{(j-1)(4n-5j+1)}\chi_{1,i}(q)\sum_{m=0}^{j-i-1}\chi_{i+1,j-m-1}(q)\gs{j-i-1}{m}{q^{2}}q^{m(2j+2i-2n+m-1)}\\
		&\qquad+\frac{\left(q^{2i}-1\right)\left(q^{2i-1}-1\right)}{q^{2}-1}q^{4(n-i)-1}\\&\qquad\qquad q^{(j-1)(4n-5j+1)}\chi_{1,i-1}(q)\sum_{m=0}^{j-i}\chi_{i,j-m-1}(q)\gs{j-i}{m}{q^{2}}q^{m(2j+2i-2n+m-3)}\\
		&=q^{j(4n-5j)}\chi_{1,i}(q)\frac{q^{2j-2}}{q^2-1}\left(q^{2(j+i-n)}(q^{2(n-j-i)}-1)\left(q^{2(j-i)}-1\right)\vphantom{\sum_{m=0}^{j-i}\gs{j-i}{m}{q^{2}}}\right.\\&\qquad\qquad \sum_{m=0}^{j-i-2}\chi_{i+1,j-m-1}(q)\gs{j-i-1}{m}{q^{2}}\left(q^{2(j-i-m-1)}-1\right)q^{m(2j+2i-2n+m+1)}\\
		&\qquad+q^{2(2j-n-1)}\left(q^{2(j-i)}-1\right)\left(q^{2(n-j-i)}\left(q^{2(i+1)}+q^{2i+1}-q^2-1\right)+1\right)\\&\qquad\qquad \sum_{m=0}^{j-i-1}\chi_{i+1,j-m-1}(q)\gs{j-i-1}{m}{q^{2}}q^{m(2j+2i-2n+m-1)}\\
		&\qquad+\left(q^{2i}-1\right)q^{4(j-i)}\left. \sum_{m=0}^{j-i}\chi_{i,j-m-1}(q)\gs{j-i}{m}{q^{2}}q^{m(2j+2i-2n+m-3)}\right)\\
		&=q^{j(4n-5j)}\chi_{1,i}(q)\frac{q^{2j-2}}{q^2-1}\left(q^{2(2j-n-1)}(q^{2(n-j-i)}-1)\left(q^{2(j-i)}-1\right)\vphantom{\sum_{m=0}^{j-i}\gs{j-i}{m}{q^{2}}}\right.\\&\qquad\qquad \sum_{m=0}^{j-i-2}\chi_{i+1,j-m-1}(q)\gs{j-i-1}{m}{q^{2}}q^{-2m}q^{m(2j+2i-2n+m+1)}\\
		&\qquad-q^{2(j+i-n)}(q^{2(n-j-i)}-1)\left(q^{2(j-i)}-1\right)
		\sum_{m=0}^{j-i-2}\chi_{i+1,j-m-1}(q)\gs{j-i-1}{m}{q^{2}}q^{m(2j+2i-2n+m+1)}\\
		&\qquad-q^{2(2j-n-1)}\left(q^{2(j-i)}-1\right)\left(q^{2(n-j-i)}-1\right)\\&\qquad\qquad		\sum_{m=0}^{j-i-1}\chi_{i+1,j-m-1}(q)\gs{j-i-1}{m}{q^{2}}q^{m(2j+2i-2n+m-1)}\\
		&\qquad+q^{2(j-i)}\left(q^{2(j-i)}-1\right)\left(q^{2i}+q^{2i-1}-1\right)
		\sum_{m=0}^{j-i-1}\chi_{i+1,j-m-1}(q)\gs{j-i-1}{m}{q^{2}}q^{m(2j+2i-2n+m-1)}\\
		&\qquad+\left(q^{2i}-1\right)q^{4(j-i)}\left. \sum_{m=0}^{j-i}\chi_{i,j-m-1}(q)\gs{j-i}{m}{q^{2}}q^{m(2j+2i-2n+m-3)}\right)\\
		&=q^{j(4n-5j)}\chi_{1,i}(q)\frac{q^{2j-2}}{q^2-1}\left(-q^{2(j+i-n)}(q^{2(n-j-i)}-1)\left(q^{2(j-i)}-1\right)\vphantom{\sum_{m=0}^{j-i}\gs{j-i}{m}{q^{2}}}\right.\\&\qquad\qquad\sum_{m=0}^{j-i-2}\chi_{i+1,j-m-1}(q)\gs{j-i-1}{m}{q^{2}}q^{m(2j+2i-2n+m+1)}\\
		&\qquad-q^{2(2j-n-1)}\left(q^{2(j-i)}-1\right)\left(q^{2(n-j-i)}-1\right)q^{(j-i-1)(3j+i-2n-2)}\\
		&\qquad+q^{2(j-i)}\left(q^{2(j-i)}-1\right)q^{2i-1}\sum_{m=0}^{j-i-1}\chi_{i+1,j-m-1}(q)\gs{j-i-1}{m}{q^{2}}q^{m(2j+2i-2n+m-1)}\\
		&\qquad+q^{2(j-i)}\left(q^{2(j-i)}-1\right)\left(q^{2i}-1\right)\sum_{m=0}^{j-i-1}\chi_{i+1,j-m-1}(q)\gs{j-i-1}{m}{q^{2}}q^{m(2j+2i-2n+m-1)}\\
        &\qquad+\left(q^{2i}-1\right)q^{4(j-i)}q^{(j-i)(3j+i-2n-3)}\\
		&\qquad+\left(q^{2i}-1\right)q^{4(j-i)}\left. \sum_{m=0}^{j-i-1}\chi_{i+1,j-m-1}(q)\left(q^{2i-1}-1\right)\gs{j-i}{m}{q^{2}}q^{m(2j+2i-2n+m-3)}\right)\\
		&=q^{j(4n-5j)}\chi_{1,i}(q)\frac{q^{2j-2}}{q^2-1}\left(-\left(q^{2(j-i)}-1\right)\left(q^{2(j-i-1)}-q^{2(2j-n-1)}\right)q^{(j-i-1)(3j+i-2n-2)}\vphantom{\sum_{m=0}^{j-i}\gs{j-i}{m}{q^{2}}}\right.\\
        &\qquad+q^{2(j+i-n)}\left(q^{2(j-i)}-1\right)\sum_{m=0}^{j-i-2}\chi_{i+1,j-m-1}(q)\gs{j-i-1}{m}{q^{2}}q^{m(2j+2i-2n+m+1)}\\
        &\qquad-\left(q^{2(j-i)}-1\right)\sum_{m=0}^{j-i-2}\chi_{i+1,j-m-1}(q)\gs{j-i-1}{m}{q^{2}}q^{m(2j+2i-2n+m+1)}\\
		&\qquad+q^{2(j-i)}\left(q^{2(j-i)}-1\right)q^{2i-1}\sum_{m=0}^{j-i-1}\chi_{i+1,j-m-1}(q)\gs{j-i-1}{m}{q^{2}}q^{m(2j+2i-2n+m-1)}\\
		&\qquad+q^{2(j-i)}\left(q^{2i}-1\right)\sum_{m=0}^{j-i-1}\chi_{i+1,j-m-1}(q)\gs{j-i}{m}{q^{2}}\left(q^{2(j-i-m)}-1\right)q^{m(2j+2i-2n+m-1)}\\
        &\qquad+\left(q^{2i}-1\right)q^{2(j-i)}q^{(j-i)(3j+i-2n-1)}\\
		&\qquad+\left(q^{2i}-1\right)q^{2(j-i)}\left. \sum_{m=0}^{j-i-1}\chi_{i+1,j-m-1}(q)\left(q^{2j-2m-1}-q^{2j-2i-2m}\right)\gs{j-i}{m}{q^{2}}q^{m(2j+2i-2n+m-1)}\right)\\
		&=q^{j(4n-5j)}\chi_{1,i}(q)\frac{q^{2j-2}}{q^2-1}\left(-\left(q^{2(j-i)}-1\right)\left(q^{2(j-i-1)}-q^{2(2j-n-1)}\right)q^{(j-i-1)(3j+i-2n-2)}\vphantom{\sum_{m=0}^{j-i}\gs{j-i}{m}{q^{2}}}\right.\\
        &\qquad+\sum_{m=0}^{j-i-2}\chi_{i+1,j-m-1}(q)\gs{j-i}{m+1}{q^{2}}\left(q^{2m+2}-1\right)q^{(m+1)(2j+2i-2n)+m(m+1)}\\
        &\qquad-\sum_{m=0}^{j-i-1}\chi_{i+1,j-m-1}(q)\gs{j-i}{m}{q^{2}}\left(q^{2(j-i-m)}-1\right)q^{m(2j+2i-2n+m+1)}\\
        &\qquad+\left(q^{2(j-i)}-1\right)q^{(j-i-1)(3j+i-2n)}+\left(q^{2j}-q^{2(j-i)}\right)q^{(j-i)(3j+i-2n-1)}\\
		&\qquad+q^{2j-1}\sum_{m=0}^{j-i-1}\chi_{i+1,j-m-1}(q)\gs{j-i}{m}{q^{2}}\left(q^{2(j-i-m)}-1\right)q^{m(2j+2i-2n+m-1)}\\
		&\qquad+\left(q^{2i}-1\right)q^{2(j-i)}\left. \sum_{m=0}^{j-i-1}\chi_{i+1,j-m-1}(q)\left(q^{2j-2m-1}-1\right)\gs{j-i}{m}{q^{2}}q^{m(2j+2i-2n+m-1)}\right)\\
		&=q^{j(4n-5j)}\chi_{1,i}(q)\frac{q^{2j-2}}{q^2-1}\left(\left(q^{2(j-i)}-1\right)q^{2(2j-n-1)}q^{(j-i-1)(3j+i-2n-2)}\vphantom{\sum_{m=0}^{j-i}\gs{j-i}{m}{q^{2}}}\right.\\
        &\qquad+\sum_{m=1}^{j-i-1}\chi_{i+1,j-m}(q)\gs{j-i}{m}{q^{2}}\left(q^{2m}-1\right)q^{m(2j+2i-2n+m-1)}\\
        &\qquad-\sum_{m=0}^{j-i-1}\chi_{i+1,j-m-1}(q)\gs{j-i}{m}{q^{2}}\left(q^{2(j-i)}-q^{2m}\right)q^{m(2j+2i-2n+m-1)}\\
		&\qquad+\sum_{m=0}^{j-i-1}\chi_{i+1,j-m-1}(q)\gs{j-i}{m}{q^{2}}\left(q^{2(2j-i-m)-1}-q^{2j-1}\right)q^{m(2j+2i-2n+m-1)}\\
        &\qquad+\left(q^{2j}-q^{2(j-i)}\right)q^{(j-i)(3j+i-2n-1)}\\
		&\qquad+\left(q^{2i}-1\right)q^{2(j-i)}\left. \sum_{m=0}^{j-i-1}\chi_{i+1,j-m}(q)\gs{j-i}{m}{q^{2}}q^{m(2j+2i-2n+m-1)}\right)\\
		&=q^{j(4n-5j)}\chi_{1,i}(q)\frac{q^{2j-2}}{q^2-1}\left(\left(q^{2(j-i)}-1\right)q^{(j-i)(3j+i-2n-1)}\vphantom{\sum_{m=0}^{j-i}\gs{j-i}{m}{q^{2}}}\right.\\
        &\qquad+\sum_{m=1}^{j-i-1}\chi_{i+1,j-m-1}(q)\gs{j-i}{m}{q^{2}}q^{m(2j+2i-2n+m-1)}\\&\qquad\qquad\left(\left(q^{2j-2m-1}-1\right)\left(q^{2m}-1\right)-\left(q^{2(j-i)}-q^{2m}\right)+\left(q^{2(2j-i-m)-1}-q^{2j-1}\right)\right)\\
        &\qquad-\chi_{i+1,j-1}(q)\left(q^{2(j-i)}-1\right)+\chi_{i+1,j-1}(q)\left(q^{2(2j-i)-1}-q^{2j-1}\right)\\&\qquad+\left(q^{2j}-q^{2(j-i)}\right)q^{(j-i)(3j+i-2n-1)}\\
		&\qquad+\left(q^{2j}-q^{2(j-i)}\right)\left. \sum_{m=0}^{j-i-1}\chi_{i+1,j-m}(q)\gs{j-i}{m}{q^{2}}q^{m(2j+2i-2n+m-1)}\right)\\
		&=q^{j(4n-5j)}\chi_{1,i}(q)\frac{q^{2j-2}}{q^2-1}\left(\left(q^{2j}-1\right)q^{(j-i)(3j+i-2n-1)}\vphantom{\sum_{m=0}^{j-i}\gs{j-i}{m}{q^{2}}}\right.\\
        &\qquad+\left(q^{2(j-i)}-1\right)\sum_{m=1}^{j-i-1}\chi_{i+1,j-m-1}(q)\left(q^{2j-2m-1}-1\right)\gs{j-i}{m}{q^{2}}q^{m(2j+2i-2n+m-1)}\\
        &\qquad+\chi_{i+1,j-1}(q)\left(q^{2(j-i)}-1\right)\left(q^{2j-1}-1\right)\\
		&\qquad+\left(q^{2j}-q^{2(j-i)}\right)\left. \sum_{m=0}^{j-i-1}\chi_{i+1,j-m}(q)\gs{j-i}{m}{q^{2}}q^{m(2j+2i-2n+m-1)}\right)\\
		&=q^{j(4n-5j)}\chi_{1,i}(q)\frac{q^{2j-2}}{q^2-1}\left(\left(q^{2j}-1\right)q^{(j-i)(3j+i-2n-1)}\vphantom{\sum_{m=0}^{j-i}\gs{j-i}{m}{q^{2}}}\right.\\
        &\qquad+\left(q^{2(j-i)}-1\right)\sum_{m=0}^{j-i-1}\chi_{i+1,j-m}(q)\gs{j-i}{m}{q^{2}}q^{m(2j+2i-2n+m-1)}\\
		&\qquad+\left(q^{2j}-q^{2(j-i)}\right)\left. \sum_{m=0}^{j-i-1}\chi_{i+1,j-m}(q)\gs{j-i}{m}{q^{2}}q^{m(2j+2i-2n+m-1)}\right)\\
		&=q^{j(4n-5j)}\chi_{1,i}(q)\frac{q^{2j-2}}{q^2-1}\left(\left(q^{2j}-1\right)\sum_{m=0}^{j-i}\chi_{i+1,j-m}(q)\gs{j-i}{m}{q^{2}}q^{m(2j+2i-2n+m-1)}\right)
	\end{align*}
	from which the formula for $\gamma_{2i,2j,2n}$ follows since the coefficient of $\gamma_{2i,2j,2n}$ in \eqref{eq:symplecticinduction} is non-zero:
	\[
		\beta_{0,2n-2j,2n}=q^{2j-2}\frac{q^{2j}-1}{q^{2}-1}\neq0\:.\qedhere
	\]
\end{proof}

\begin{theorem}\label{th:gammageneralsymplectic}
	If $i$ or $j$ is odd, then $\gamma_{i,j,2n,2k}=0$. For $0\leq i\leq \min\{j,n-j\}$ and $j+k\leq n$ we have that
	\begin{align*}
		\gamma_{2i,2j,2n,2k}&=\beta_{2i,2j,2n,2k+2j}\gamma_{2i,2j,2k+2j}\\
		&=q^{2(k+i)(n-k-j)+j(4k-j)}\gs{n-j-i}{k-i}{q^2}\chi_{1,i}(q)\sum_{m=0}^{j-i}\chi_{i+1,j-m}(q)\gs{j-i}{m}{q^{2}}q^{m(2i-2k+m-1)} .
	\end{align*}
\end{theorem}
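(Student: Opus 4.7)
The plan is to reduce the general case $\gamma_{2i,2j,2n,2k}$ to the special ``spanning'' case already established in Theorem \ref{th:gammasymplectic}, by inserting an intermediate non-singular $(2k+2j)$-space. First, the vanishing claim when $i$ or $j$ is odd is immediate: for $\gamma_{i,j,2n,2k}$ to be nonzero one needs a non-singular symplectic $(j+2k)$-space, which forces $j$ to be even; and given an $i$-singular $j$-space with $j$ even, Lemma \ref{lem:alphasymplectic} forces $i\equiv j\equiv 0\pmod{2}$, so we are automatically in the regime where the indices are $2i$ and $2j$.

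For the main equality I would use a double-counting argument, completely parallel to the hermitian analogue in Theorem \ref{th:gammageneralhermitian}. Fix a $2i$-singular $2j$-space $\pi$ in $\F_q^{2n}$ and count pairs $(\sigma,\tau)$ where $\sigma$ is a non-singular $2k$-space with $\sigma\cap\pi=\{0\}$ and $\langle\pi,\sigma\rangle$ non-singular of dimension $2k+2j$, and $\tau\supseteq\langle\pi,\sigma\rangle$ is a non-singular $(2k+2j)$-space through $\pi$. Since $\dim\langle\pi,\sigma\rangle=2k+2j$, the space $\tau=\langle\pi,\sigma\rangle$ is uniquely determined by $\sigma$, so the total number of pairs equals $\gamma_{2i,2j,2n,2k}$. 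Counting in the other order, one first picks $\tau$ from among the $\beta_{2i,2j,2n,2k+2j}$ non-singular $(2k+2j)$-spaces containing $\pi$; inside such a $\tau$, which is a non-singular symplectic space of dimension $2(k+j)$ containing the $2i$-singular $2j$-space $\pi$, the admissible $\sigma$ are precisely those counted by $\gamma_{2i,2j,2(k+j)}$ (the ``spanning'' case). This establishes $\gamma_{2i,2j,2n,2k}=\beta_{2i,2j,2n,2k+2j}\,\gamma_{2i,2j,2(k+j)}$.

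The remainder is pure substitution and carries no genuine obstacle. Applying Lemma \ref{lem:betasymplectic} with the lemma's parameters $(j,\ell,k)$ specialised to $(2j,\,j-i,\,k+j)$ yields
\[
\beta_{2i,2j,2n,2k+2j}=q^{2(k+i)(n-k-j)}\gs{n-j-i}{k-i}{q^2}\,,
\]
and applying Theorem \ref{th:gammasymplectic} with its $n$ replaced by $k+j$ turns the prefactor $q^{j(4n-5j)}$ into $q^{j(4k-j)}$ and the inner exponent $m(2j+2i-2n+m-1)$ into $m(2i-2k+m-1)$. Multiplying the two expressions gives exactly the displayed closed form. The only thing to monitor is that the exponent $2(k+i)(n-k-j)+j(4k-j)$ is correctly assembled from the two factors; the hard combinatorial work is already encapsulated in Lemma \ref{lem:betasymplectic} and Theorem \ref{th:gammasymplectic}.
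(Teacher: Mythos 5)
Your proposal is correct and follows essentially the same route as the paper: the same double count of pairs $(\sigma,\tau)$ with $\tau$ the non-singular $(2k+2j)$-space forced to equal $\langle\pi,\sigma\rangle$, yielding $\gamma_{2i,2j,2n,2k}=\beta_{2i,2j,2n,2k+2j}\,\gamma_{2i,2j,2(k+j)}$, followed by substitution from Lemma \ref{lem:betasymplectic} (with $\ell=j-i$ and the lemma's $k$ equal to $k+j$) and Theorem \ref{th:gammasymplectic} (with $n$ replaced by $k+j$). Your parameter specialisations and the assembled exponent $2(k+i)(n-k-j)+j(4k-j)$ all check out.
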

\begin{proof}
	Consider a fixed $2i$-singular $2j$-space. Now, we count in two ways the pairs $(\sigma,\tau)$, where $\sigma$ is a non-singular $2k$-space disjoint from $\pi$, and $\tau$ is a non-singular $(2k+2j)$-space and $\pi,\sigma\subseteq\tau$. The equality $\gamma_{2i,2j,2n,2k}=\beta_{2i,2j,2n,2k+2j}\gamma_{2i,2j,2k+2j}$ immediately follows. The second part of the result then follows from Lemma \ref{lem:betasymplectic} and Theorem \ref{th:gammasymplectic}.
\end{proof}

\subsection{The proportion of non-singular trivially intersecting subspaces spanning a non-singular space}
In this subsection, we look at the proportion that motivated this research.

\begin{definition}\label{def:rhosymplectic}
    Given a non-degenerate symplectic form on $\F^{2n}_{q}$ and integers $j,k$ with $0\leq j,k\leq n-1$ and $j+k\leq n$, let $\mathcal{S}_{2j,2k}$ be the set of pairs $(\pi,\pi')$ with $\dim(\pi)=2j$ and $\dim(\pi')=2k$ and both $\pi$ and $\pi'$ non-singular. Let $\mathcal{T}_{2j,2k}$ be the subset of $\mathcal{S}_{2j,2k}$ with pairs $(\pi,\pi')$ such that $\dim(\langle\pi,\pi'\rangle)=2j+2k$ and $\langle\pi,\pi'\rangle$ non-singular. The proportion $\frac{|\mathcal{T}_{2j,2k}|}{|\mathcal{S}_{2j,2k}|}$ is denoted by $\rho_{2j,2k,2n}$.
\end{definition}

Note that by definition $\rho_{2j,2k,2n}=\rho_{2k,2j,2n}$.

\begin{theorem}\label{th:rhosymplectic}
	For integers $j,k,n$ with $0\leq j,k\leq n-1$ and $j+k\leq n$, we have
    \[
        \rho_{2j,2k,2n}=q^{j(2k-j)}\frac{\psi^{-}_{n-j-k+1,n-j}(q^2)}{\psi^{-}_{n-k+1,n}(q^2)}\sum_{m=0}^{j}\chi_{1,j-m}(q)\gs{j}{m}{q^{2}}q^{m(m-2k-1)}\;.
    \]
\end{theorem}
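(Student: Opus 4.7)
The plan is to imitate the proof of Theorem \ref{th:rhohermitian}, only replacing the role of the unitary group by the symplectic group. Since $\PGSp(2n,q)$ acts transitively on the non-singular $2j$-spaces (see \cite[Theorem 3.7]{wanbook}), the number of non-singular $2k$-spaces forming a ``good pair'' with a fixed non-singular $2j$-space $\pi$ is independent of the choice of $\pi$. Hence
\[
\rho_{2j,2k,2n}=\frac{\gamma_{0,2j,2n,2k}}{\alpha_{0,2k,2n}}\:,
\]
and the whole proof reduces to substituting the closed-form expressions already established.

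First I would specialise Lemma \ref{lem:alphasymplectic} to the denominator: for $j'=2k$ and $\ell=k$ we obtain
\[
\alpha_{0,2k,2n}=q^{2k(n-k)}\frac{\psi^{-}_{n-k+1,n}(q^{2})}{\psi^{-}_{1,k}(q^{2})}\:.
\]
Next, specialise Theorem \ref{th:gammageneralsymplectic} to $i=0$. Noting that $\chi_{1,0}(q)=1$ (empty product) this gives
\[
\gamma_{0,2j,2n,2k}=q^{2k(n-k-j)+j(4k-j)}\gs{n-j}{k}{q^{2}}\sum_{m=0}^{j}\chi_{1,j-m}(q)\gs{j}{m}{q^{2}}q^{m(m-2k-1)}\:.
\]

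Dividing these two expressions, the exponent of $q$ collapses via
\[
2k(n-k-j)+j(4k-j)-2k(n-k)=2kj-j^{2}=j(2k-j)\:,
\]
while the Gaussian binomial coefficient rearranges cleanly:
\[
\gs{n-j}{k}{q^{2}}\cdot\frac{\psi^{-}_{1,k}(q^{2})}{\psi^{-}_{n-k+1,n}(q^{2})}=\frac{\psi^{-}_{n-j-k+1,n-j}(q^{2})}{\psi^{-}_{n-k+1,n}(q^{2})}\:.
\]
Combining these with the sum from $\gamma_{0,2j,2n,2k}$ yields the claimed formula.

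There is essentially no obstacle: all the heavy lifting has been done in Theorem \ref{th:gammasymplectic} (the induction producing $\gamma_{2i,2j,2n}$) and its extension Theorem \ref{th:gammageneralsymplectic}. The only things to watch are the $i=0$ boundary values, namely $\chi_{1,0}(q)=1$ and $\varphi^{+}/\chi$ conventions for empty products from Definition \ref{def:functionsphipsichi}, and the bookkeeping of the $\psi^{-}$-ratios, which telescope as above. Hence the proof consists of one short paragraph carrying out this substitution and algebraic simplification.
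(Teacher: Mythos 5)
Your proposal is correct and follows exactly the paper's own argument: transitivity of $\PGSp(2n,q)$ gives $\rho_{2j,2k,2n}=\gamma_{0,2j,2n,2k}/\alpha_{0,2k,2n}$, and substituting Lemma \ref{lem:alphasymplectic} and Theorem \ref{th:gammageneralsymplectic} with $i=0$ yields the formula after the same exponent and $\psi^{-}$-ratio simplifications you describe.
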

\begin{proof}
    Since the symplectic group $\PGSp(2n,q)$ acts transitively on the non-singular $2j$-spaces we have immediately that $\rho_{2j,2k,2n}=\frac{\gamma_{0,2j,2n,2k}}{\alpha_{0,2k,2n}}$. From Lemma \ref{lem:alphasymplectic} and Theorem \ref{th:gammageneralsymplectic} we get
    \begin{align*}
        \rho_{2j,2k,2n}&=\frac{q^{2k(n-k+j)-j^{2}}\gs{n-j}{k}{q^2}\sum_{m=0}^{j}\chi_{1,j-m}(q)\gs{j}{m}{q^{2}}q^{m(m-2k-1)}}{q^{2k(n-k)}\gs{n}{k}{q^2}}\\
        &=q^{j(2k-j)}\frac{\psi^{-}_{n-j-k+1,n-j}(q^2)}{\psi^{-}_{n-k+1,n}(q^2)}\sum_{m=0}^{j}\chi_{1,j-m}(q)\gs{j}{m}{q^{2}}q^{m(m-2k-1)}\;.\qedhere
    \end{align*}
\end{proof}

\begin{remark}
    In \cite{gnp} it was proven that $\rho_{2j,2k,2(j+k)}\geq 1-\frac{5}{3}q^{-1}$, which was improved to $\rho_{2j,2k,2(j+k)}\geq 1-\frac{10}{7}q^{-1}$ in \cite{gim}. In \cite{gnp2} it was proven that $\rho_{2j,2k,2n}\geq 1-\frac{7}{4}q^{-1}$ if $n\geq j+k+1$. The previous theorem improves these results by giving the exact value of the proportion.
\end{remark}

Although we have an exact expression for the ratio of non-singular pairs that are disjoint, we will present a lower bound for it, as to get a better feeling for its asymptotic behavior.
\par It is immediate (both from Definition \ref{def:rhosymplectic} and Theorem \ref{th:rhosymplectic}) that $\rho_{0,2k,2n}=1$, and by symmetry $\rho_{2j,0,2n}=1$. We now look at the other cases.

\begin{theorem}\label{th:symplecticrhobound}
    Let $j$ and $k$ be integers with $j,k\geq 1$. We have that
    \[
        \rho_{2j,2k,2(j+k)}\geq 1-\frac{1}{q}-\frac{1}{q^2}+\max\left\{\frac{1}{q^{2j+1}},\frac{1}{q^{2k+1}}\right\}
    \]
    and
    \[
        \rho_{2j,2k,2n}\geq 1-\frac{1}{q}-\frac{1}{q^3}
    \]
    for all $n\geq j+k+1$.
\end{theorem}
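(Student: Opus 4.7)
By the symmetry $\rho_{2j,2k,2n}=\rho_{2k,2j,2n}$ coming directly from Definition~\ref{def:rhosymplectic}, I may assume $j\leq k$. Starting from the exact formula in Theorem~\ref{th:rhosymplectic} and noting that every summand in the $m$-sum is positive, I retain only the $m=0$ contribution to obtain
\[
\rho_{2j,2k,2n}\;\geq\;q^{j(2k-j)}\,\frac{\psi^{-}_{n-j-k+1,n-j}(q^2)}{\psi^{-}_{n-k+1,n}(q^2)}\,\chi_{1,j}(q).
\]
The key identity driving the proof is $\chi_{1,j}(q)\,\psi^{-}_{1,j}(q^2)=\psi^{-}_{1,2j}(q)$, obtained by splitting $\prod_{l=1}^{2j}(1-q^{-l})$ into its odd- and even-indexed factors. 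This consolidates the $\chi$-factor into a single $\psi^{-}$-product at $q$, which is exactly the form to which Lemma~\ref{lem:psiminbounds} applies.

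For the case $n=j+k$, the ratio $\psi^{-}_{1,k}(q^2)/\psi^{-}_{j+1,j+k}(q^2)$ simplifies, after cancelling the common range $[j+1,k]$, to $\psi^{-}_{1,j}(q^2)/\psi^{-}_{k+1,j+k}(q^2)$. The factor $\psi^{-}_{1,j}(q^2)$ then combines with $\chi_{1,j}(q)$ via the key identity to yield $\psi^{-}_{1,2j}(q)$, the $q$-power prefactors balance exactly to $q^{-\binom{2j+1}{2}}$, and the remaining factors $(1-q^{-2l})^{-1}\geq 1$ for $l\in[k+1,j+k]$ can simply be dropped. This gives $\rho_{2j,2k,2(j+k)}\geq \prod_{m=1}^{2j}(1-q^{-m})$, and the first inequality of Lemma~\ref{lem:psiminbounds} applied with $a=2j$ yields the claimed bound $1-q^{-1}-q^{-2}+q^{-2j-1}=1-q^{-1}-q^{-2}+\max\{q^{-2j-1},q^{-2k-1}\}$.

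For the case $n\geq j+k+1$ the plan is to invoke instead the second inequality of Lemma~\ref{lem:psiminbounds} with $a=j$, which is tailored precisely to produce the asymptotic $1-q^{-1}-q^{-3}$. Using the key identity and the relation $(q^{2j+2}-1)\,\psi^{-}_{1,j}(q^2)=\psi^{-}_{1,j+1}(q^2)$, all powers of $q$ again cancel and the bound reduces to
\[
\rho_{2j,2k,2n}\;\geq\;R\cdot\bigl(1-q^{-1}-q^{-3}+(q^2-q+1)q^{-2j-3}\bigr),
\]
where
\[
R\;=\;\frac{\prod_{l=n-j-k+1}^{n-j}(1-q^{-2l})}{\prod_{l=n-k+1}^{n}(1-q^{-2l})\,\prod_{l=2}^{j+1}(1-q^{-2l})}.
\]
The remaining step is to show $R\geq 1$: the numerator index set has size $k$ while the denominator index set $[2,j+1]\cup[n-k+1,n]$ has size $j+k$, and by matching each numerator index with a denominator index of equal or smaller value (in the sense that $(1-q^{-2l})$ is no larger), the $j$ leftover denominator factors each contribute $(1-q^{-2l})^{-1}\geq 1$ to $R$. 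The case analysis splits according to whether $n\leq 2j+k$ (when the numerator overlaps the block $[2,j+1]$) or $n>2j+k$ (when it lies entirely between the two denominator blocks).

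The main obstacle is the combinatorial verification that $R\geq 1$ in every case $n\geq j+k+1$ with $j\leq k$; the subtlety lies in exhibiting the correct index matching when $n$ is close to $j+k+1$ and the blocks $[2,j+1]$ and $[n-k+1,n]$ almost touch. Small boundary cases can be checked by direct computation, analogous to the explicit expansions carried out in the proof of Theorem~\ref{th:hermitianrhobound2}.
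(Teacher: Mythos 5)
Your argument follows the same skeleton as the paper's: reduce to $j\leq k$ by symmetry, keep only the (positive) $m=0$ summand, use the identity $\chi_{1,j}(q)\,\psi^{-}_{1,j}(q^2)=\psi^{-}_{1,2j}(q)$, and invoke the first and second inequalities of Lemma~\ref{lem:psiminbounds} for the cases $n=j+k$ and $n\geq j+k+1$ respectively; your treatment of $n=j+k$ is essentially identical to the paper's. The only genuine divergence is in the case $n\geq j+k+1$: the paper first rewrites the ratio in its symmetric form $\psi^{-}_{n-j-k+1,n-k}(q^2)/\psi^{-}_{n-j+1,n}(q^2)$ (only $j$ factors top and bottom) and then applies the per-factor bounds $q^{2(n-j+\ell)}-1\leq q^{2(n-j+\ell)}$ and $q^{2(n-j-k+\ell)}-1\geq q^{2(n-j-k-1)}\left(q^{2(\ell+1)}-1\right)$, which lands directly on $q^{-2j^2-3j}\psi^{-}_{1,2j}(q)\frac{q^{2j+2}-1}{q^2-1}$ with no combinatorics, whereas you keep the $k$-factor form and reduce to $R\geq1$. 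Your reduction is correct (the powers of $q$ do cancel exactly as you claim), and the matching you flag as the main obstacle is in fact uniform and needs no case split on $n$ versus $2j+k$: send the numerator index $n-j-k+t$ to $t+1\in[2,j+1]$ for $t=1,\dots,j$, which goes in the required direction precisely because $n\geq j+k+1$ gives $t+1\leq n-j-k+t$; the remaining numerator indices $n-j-k+t$ for $t=j+1,\dots,k$ are literally the indices $n-k+1,\dots,n-j$ of the block $[n-k+1,n]$ and cancel identically; the $j$ unmatched denominator indices $n-j+1,\dots,n$ then each contribute a factor $\left(1-q^{-2l}\right)^{-1}\geq1$ to $R$. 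With that observation inserted, your proof is complete. The paper's per-factor route buys a shorter write-up; yours makes more transparent exactly which factors of the exact formula in Theorem~\ref{th:rhosymplectic} are being discarded and why the loss is one-sided.
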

\begin{proof}
    Without loss of generality we can assume $j\leq k$. From Theorem \ref{th:rhosymplectic} we have immediately
    \begin{align}\label{eq:symplecticrhobound}
         \rho_{2j,2k,2n}&=q^{j(2k-j)}\frac{\psi^{-}_{n-j-k+1,n-j}(q^2)}{\psi^{-}_{n-k+1,n}(q^2)}\sum_{m=0}^{j}\chi_{1,j-m}(q)\gs{j}{m}{q^{2}}q^{m(m-2k-1)}\nonumber\\
         &=q^{j(2k-j)}\frac{\psi^{-}_{n-j-k+1,n-k}(q^2)}{\psi^{-}_{n-j+1,n}(q^2)}\sum_{m=0}^{j}\chi_{1,j-m}(q)\gs{j}{m}{q^{2}}q^{m(m-2k-1)}\nonumber\\
         &\geq q^{j(2k-j)} \prod^{j}_{\ell=1}\left(\frac{q^{2(n-j-k+\ell)}-1}{q^{2(n-j+\ell)}-1}\right)\chi_{1,j}(q)\nonumber\\
         &\geq q^{2jk-j^{2}} \prod^{j}_{\ell=1}\left(q^{-2k}-q^{-2(n-j+\ell)}\right)\chi_{1,j}(q)\nonumber\\
         &=q^{j(2k+2j-2n)-j^{2}-2\binom{j+1}{2}} \prod^{j}_{\ell=1}\left(q^{2(n-j-k+\ell)}-1\right)\chi_{1,j}(q)\;.
    \end{align}
    For $n=j+k$ Equation \eqref{eq:symplecticrhobound} simplifies to
    \begin{align*}
        \rho_{2j,2k,2(j+k)}&\geq q^{-2j^{2}-j} \prod^{j}_{\ell=1}\left(q^{2\ell}-1\right)\chi_{1,j}(q)=q^{-2j^{2}-j}\psi^{-}_{1,2j}(q)\;.
    \end{align*}
    Using Lemma \ref{lem:psiminbounds} we then find
    \begin{align*}
        \rho_{2j,2k,2(j+k)}&\geq q^{-2j^{2}-j}\psi^{-}_{1,2j}(q)\geq q^{-2j^{2}-j}q^{\binom{2j+1}{2}}\left(1-\frac{1}{q}-\frac{1}{q^2}+\frac{1}{q^{2j+1}}\right)\geq1-\frac{1}{q}-\frac{1}{q^2}+\frac{1}{q^{2j+1}}\;.
    \end{align*}
    For $n\geq j+k+1$ we know that $q^{2(n-j-k-1)}\geq1$ and thus
    \begin{align*}
        \rho_{2j,2k,2n}&\geq q^{j(2k+2j-2n)-j^{2}-2\binom{j+1}{2}} \prod^{j}_{\ell=1}\left(q^{2(n-j-k+\ell)}-q^{2(n-j-k-1)}\right)\chi_{1,j}(q)\\
        &=q^{-2j-j^{2}-2\binom{j+1}{2}} \prod^{j}_{\ell=1}\left(q^{2(\ell+1)}-1\right)\chi_{1,j}(q)\\
        &=q^{-2j^{2}-3j}\psi^{-}_{1,2j}(q)\frac{q^{2j+2}-1}{q^2-1}\\
        &\geq q^{-2j^{2}-3j}q^{\binom{2j+2}{2}-1}\left(1-\frac{1}{q}-\frac{1}{q^3}+\frac{q^2-q+1}{q^{2j+3}}\right)\\
        &\geq 1-\frac{1}{q}-\frac{1}{q^3}\;.\qedhere
    \end{align*}
\end{proof}

\begin{remark} We can deduce the following from Theorem \ref{th:symplecticrhobound}. Note that these bounds always improve on those found in $\cite{gnp}$ and \cite{gnp2}:
\begin{itemize}
\item if $n\geq j+k+1$, or $j=1$ or $k=1$, then $\rho_{2j,2k,2n}\geq 1-\frac{5}{4}q^{-1}$,
\item if $n=j+k$ it follows from the proof of Theorem \ref{th:symplecticrhobound} that $\rho_{2j,2k,2(j+k)}\geq q^{-2j^2-j}\psi^{-}_{1,2j}(q)=(1-\frac{1}{q})(1-\frac{1}{q^2})(1-\frac{1}{q^3})\ldots (1-\frac{1}{q^{2j}})$. This lower bound was already observed in \cite{gim}. From it, it
follows that $\rho_{2j,2k,2(j+k)}\geq 1-1.4224 q^{-1}\geq 1-\frac{10}{7} q^{-1}$.
\end{itemize}
\end{remark}

\begin{remark}
    The coefficients of $1$, $q^{-1}$ and $q^{-2}$ from Theorem \ref{th:symplecticrhobound} cannot be improved since some more detailed analysis shows that
    \begin{align*}
        \rho_{2j,2k,2n}=
        \begin{cases}
            1-q^{-1}-q^{-2}+O\left(q^{-3}\right) &\quad n=j+k\text{ and }1\in\{j,k\},\\
            1-q^{-1}-q^{-2}+O\left(q^{-5}\right) &\quad n=j+k\text{ and }j,k\geq2,\\
            1-q^{-1}+O\left(q^{-3}\right) &\quad n\geq j+k+1\text{ and }1\in\{j,k\},\\
            1-q^{-1}-q^{-3}+O\left(q^{-5}\right) &\quad n\geq j+k+1\text{ and }j,k\geq2.
        \end{cases}
    \end{align*}
\end{remark}

\paragraph{Acknowledgment:} The research of the first author is partially supported by the Croatian Science Foundation under the project 5713.


\begin{thebibliography}{99}
	\bibitem{gim} S.P. Glasby, F. Ihringer, and S. Mattheus. The proportion of non-degenerate complementary subspaces in classical spaces. \textit{Des. Codes Cryptogr.} {\bf 91 (9)} (2023), 2879--2891.
	\bibitem{gnp} S.P. Glasby, A.C. Niemeyer, and C.E. Praeger. The probability of spanning a classical space by two nondegenerate subspaces of complementary dimensions. \textit{Finite Fields Appl.} {\bf 82} (2022), 102055.
	\bibitem{gnp2} S.P. Glasby, A.C. Niemeyer, and C.E. Praeger. Random generation of direct sums of finite non-degenerate subspaces. \textit{Linear Algebra Appl.} {\bf 649} (2022), 408--432.
    \bibitem{ht} J.W.P. Hirschfeld and J.A. Thas. \textit{General Galois Geometries}. Springer Monogr. Math., second edition. Springer, London (2016), xvi+409 pp.
	\bibitem{Pless} V. Pless. The number of isotropic subspaces in a finite geometry. {\em Atti Accad. Naz. Lincei Rend. Cl. Sci. Fis. Mat. Nat.} (8) {\bf 39} (1965), 418--421.
	\bibitem{Ray}D.K. Ray-Chaudhuri. Some results on quadrics in finite projective geometry based on Galois fields. {\em Canadian J. Math.} {\bf 14} (1962), 129--138.
    \bibitem{Segre59} B. Segre. Le geometrie di Galois. \textit{Annali di Matematica Pura ed Applicata, ser. $4^a$} {\bf 48} (1959), 1--96.
    \bibitem{segre} B. Segre. \textit{Lectures on Modern Geometry (with an appendix by L. Lombardo-Radice)}. Consiglio Nazionale delle Ricerche, Monografie Matematiche. Edizioni Cremonese, Roma (1961), 479 pp.
    \bibitem{taylorbook}  D.E. Taylor. \textit{The geometry of the classical groups}. Sigma Series in Pure Mathematics, vol. 9. Heldermann Verlag, Berlin (1992), xii+229 pp. 
    \bibitem{wanbook} Z.X. Wan. \textit{Geometry of Classical Groups over Finite Fields}. Studentlitteratur. LundChartwell-Bratt Ltd., Bromley (1993). viii+394 pp.
    \bibitem{wansurvey} Z.X. Wan. Geometry of classical groups over finite fields and its applications. \textit{Discrete Math.} {\bf 174 (1--3)} (1997), 365--381.
\end{thebibliography}
\end{document}